\newtheorem{thm}{Theorem}[section]
\newtheorem{cor}[thm]{Corollary}
\newtheorem{lem}[thm]{Lemma}
\newtheorem{prop}[thm]{Proposition}
\theoremstyle{definition}
\newtheorem{defn}[thm]{Definition}
\newtheorem{example}[thm]{Example}
\theoremstyle{remark}
\newtheorem{rem}[thm]{Remark}
\numberwithin{equation}{section}
\def\R{\mathbb R}
\def\N{\mathbb N}
\def\ep{\varepsilon}
\def\ga{\gamma }
\begin{document}
\title[Multi-dimensional $\rho$-almost periodic type functions...]{Multi-dimensional $\rho$-almost periodic type functions and applications}

\author[M. Fe\v{c}kan, M. T. Khalladi, M. Kosti\'{c} and A. Rahmani]{M. Fe%
\v{c}kan, M. T. Khalladi, M. Kosti\'{c} and A. Rahmani}
\address{Department of Mathematical Analysis and Numerical Mathematics,
Faculty of Mathematics, Physics and Informatics, Comenius University,
Slovakia}
\email{Michal.Feckan@fmph.uniba.sk}
\address{Department of Mathematics and Computer Sciences, University of
Adrar, Algeria}
\email{khalladimohammedtaha@yahoo.fr}
\address{Faculty of Technical Sciences, University of Novi Sad, Trg D.
Obradovica 6, 21125 Novi Sad, Serbia}
\email{marco.s@verat.net}
\address{Laboratory of Mathematics, Modeling and Applications (LaMMA),
University of Adrar, Adrar, Algeria}
\email{vuralrahmani@gmail.com}

{\renewcommand{\thefootnote}{} \footnote{2010 {\it Mathematics
Subject Classification.} 42A75, 43A60, 47D99.
\\ \text{  }  \ \    {\it Key words and phrases.} Multi-dimensional $\rho$-almost periodic functions, multi-dimensional $(\omega,\rho)$-almost periodic functions, 
multi-dimensional
$({\bf \omega}_{j},\rho_{j})_{j\in {\mathbb N}_{n}}$-periodic functions,
abstract Volterra integro-differential equations.
\\  \text{  }  
The third named author is partially supported by grant 451-03-68/2020/14/200156 of Ministry
of Science and Technological Development, Republic of Serbia.}}

\begin{abstract}
In this paper, we analyze various classes of multi-dimensional $\rho$-almost periodic type functions $F : I \times X \rightarrow Y$ and multi-dimensional $(\omega,\rho)$-almost periodic type functions $F : I \times X \rightarrow Y,$ where 
$n\in {\mathbb N},$
$\emptyset \neq I \subseteq {\mathbb R}^{n},$ $X$ and $Y$ are complex Banach spaces and $\rho$ is a binary relation on $Y.$ 
The proposed notion is new even in the one-dimensional setting, for the functions of the form $F : I \rightarrow Y.$
The main structural properties and characterizations for the introduced classes of functions are presented.
We provide certain applications of our theoretical results to
the abstract Volterra integro-differential equations, as well.
\end{abstract}
\maketitle

\section{Introduction and preliminaries}

The notion of an almost periodic function was introduced by H. Bohr \cite{h.bor} around 1924-1926 and later generalized by many other mathematicians. Let $I$ be either ${\mathbb R}$ or $[0,\infty),$ let $c\in {\mathbb C} \setminus \{0\}$ satisfy $|c|=1,$ and let $f : I \rightarrow X$ be a given continuous function, where $X$ is a complex Banach space equipped with the norm $\| \cdot \|$. For any $\varepsilon>0,$ a number $\tau>0$ is called a $(\varepsilon,c)$-period for $f(\cdot)$ if and only if\index{$\varepsilon$-period}
$
\| f(t+\tau)-cf(t) \| \leq \varepsilon,$ $ t\in I.
$
The set consisting of all $(\varepsilon,c)$-periods for $f(\cdot)$ is denoted by $\vartheta_{c}(f,\varepsilon).$ The function $f(\cdot)$ is said to be $c$-almost periodic if and only if for each $\varepsilon>0$ the set $\vartheta_{c}(f,\varepsilon)$ is relatively dense in $[0,\infty),$ i.e., 
there exists $l>0$ such that any subinterval of $[0,\infty)$ of length $l$ meets $\vartheta_{c}(f,\varepsilon);$ see \cite[Definition 2.1]{c1}, and the research articles \cite{cheban-anp,cheban-anp1} by D. Cheban and his coauthors for some pioneering results in this direction. The usual notion of almost periodicity (almost anti-periodicity) is obtained by plugging $c=1$ ($c=-1$); for more details about the subject, the reader may consult the research monographs \cite{besik}, \cite{diagana}-\cite{gaston}, \cite{nova-mono}-\cite{nova-mse}, \cite{188}-\cite{18}, \cite{pankov} and \cite{30}. 

The notion of a periodic function has recently been reconsidered by E. Alvarez, A. G\'omez and M. Pinto \cite{alvarez1} as follows: 
A continuous function $f : I \rightarrow X$ is said to be 
$(\omega,c)$-periodic ($\omega>0,$ $c\in {\mathbb C} \setminus \{0\}$) if and only if 
$f(t+\omega)=cf(t)$ for all $t\in I.$ 
It is well known that a continuous function $f : I \rightarrow X$ is $(\omega,c)$-periodic if and only if the function $g(\cdot)\equiv c^{-\cdot/\omega}f(\cdot)$ is periodic and 
$g(t+\omega)=g(t)$ for all $t\in I;$ here, $c^{-\cdot/\omega}$ denotes the principal branch of the exponential function (cf. also \cite{aga-feckan} and \cite{alvarez2}-\cite{alvarez3}). In \cite{c1-fil}, the authors have recently analyzed various classes of 
$(\omega,c)$-almost periodic type functions.

The concept of $( w,\mathbb{T}) $-periodicity for a continuous function $f : [0,\infty) \rightarrow X$,  where $\omega>0$ and $\mathbb{T} : X \rightarrow X$ is a linear
isomorphism, has recently been introduced and analyzed by M. Fe\v{c}kan, K. Liu and
J. Wang in \cite[Definition 2.2]{Reference040}; more precisely, a continuous function $f : [0,\infty) \rightarrow X$ is called $( w,\mathbb{T}) $-periodic if and only if 
$f(t+\omega)={\mathbb T}f(t)$ for all $t\geq 0.$ In the above-mentioned paper, the existence and uniqueness of $( w,\mathbb{T}) $-periodic solutions have been investigated for 
various classes of impulsive evolution equations, linear and semilinear problems by using some results from the theory of strongly continuous semigroups, the Fredholm alternative type
theorems and the fixed point theorems.

Before proceeding any further, we would like to observe that the notion of $( w,\mathbb{T}) $-periodicity is very general, although it might not seem at first sight. For example, if
$(T(t))_{t\geq 0}$ is a strongly continuous non-degenerate semigroup generated by a closed linear operator $A$ in $X$, then the unique mild solution of the abstract first-order Cauchy problem $u^{\prime}(t)=Au(t),$ $t\geq 0,$ equipped with the initial condition $u(0)=x,$ is given by $u(t):=T(t)x,$ $t\geq 0;$ see \cite{a43}, \cite{FKP} and references cited therein for further information concerning strongly continuous semigroups. It is clear that this solution has the property that for each $\omega>0$ we have
$u(t+\omega)=T(\omega)u(t),$ $t\geq 0$ and $u(\cdot)$ is therefore $(\omega, T(\omega))$-periodic for each $\omega>0$; needless to say that, if $(T(t))_{t\geq 0}$ can be extended to a strongly continuous group in $X,$ then the operator $T(\omega)$ is a linear isomorphism. This particularly holds if $A$ is a complex matrix of format $n\times n;$ then the unique solution of the system of ordinary differential equations
$u^{\prime}(t)=Au(t),$ $t\geq 0;$ $u(0)=x,$ given by $u(t):=T(t)x,$ $t\geq 0,$ is $(\omega, T(\omega))$-periodic for each real number $\omega>0.$ 
 
Further on, in \cite{marko-manuel-ap}, A. Ch\'avez, K. Khalil, M. Kosti\' c and M. Pinto have analyzed
various classes of almost periodic functions of form $F : I \times X\rightarrow Y,$ where $(Y,\|\cdot \|_{Y})$ is a complex Banach spaces and $\emptyset \neq  I \subseteq {\mathbb R}^{n}$;
the multi-dimensional $c$-almost periodic type functions have recently been investigated in \cite{c1-fil}, while the multi-dimensional $(\omega,c)$-almost periodic type functions have recently been investigated in \cite{nds-2021}; here, $c\in {\mathbb C}\setminus \{0\}$. For further information concerning multi-dimensional ($c$-)almost periodic type functions, multi-dimensional almost automorphic type functions and their applications, we refer the reader to the forthcoming research monograph \cite{nova-selected} by M. Kosti\' c (see also \cite{marko-manuel-ap}-\cite{marko-manuel-aa}).

For multi-periodic solutions of various classes of ordinary differential equations and partial differential equations, we also refer the reader to \cite{Berzha, Berzha1, A-FONDA, Kenzh, Kulz1, Kulz2, mingareli, multi-sarta1, multi-sarta2,
umbet1970, umbet1972, Umbet}.
Especially, we would like to mention the investigations of
G. Nadin \cite{g-nadin1}-\cite{g-nadin3}
concerning the space-time periodic reaction-diffusion equations, G. Nadin-L.Rossi \cite{g-nadin4} concerning transition waves for Fisher–KPP equations, L. Rossi \cite{luca-rossi} concerning Liouville type results for almost periodic type linear
operators,
the investigation of B. Scarpellini \cite{bruno-scar} concerning
the space almost periodic solutions of reaction-diffusion equations, and the recent investigation of R. Xie, Z. Xia, J. Liu \cite{xie-xia} concerning the quasi-periodic limit functions, $(\omega_{1},\omega_{2})$-(quasi)-periodic
limit functions and their applications, given only in the two-dimensional setting.

The main aim of this paper is to continue some of the above-mentioned research studies by investigating various classes of multi-dimensional $\rho$-almost periodic type functions $F : I \times X \rightarrow Y$ and multi-dimensional $(\omega,\rho)$-almost periodic type functions $F : I \times X \rightarrow Y$, where $n\in {\mathbb N},$
$\emptyset \neq I \subseteq {\mathbb R}^{n},$ $X$ and $Y$ are complex Banach spaces and $\rho$ is a binary relation on $Y.$ As mentioned in the abstract, our notion is completely new even in the one-dimensional setting. The introduced notion seems to be very general and we feel it is our duty to say that we have not been able to perceive with forethought all related problems and questions concerning multi-dimensional $\rho$-almost periodic type functions here (even the one-dimensional setting requires further analyses; see e.g., \cite{nova-mono}). It is our strong belief that this is only the pioneering paper in this direction which will motivate other authors to further explore this interesting topic in the near future. 

The organization and main ideas of this paper can be briefly described as follows. After collecting some necessary definitions and results about
the general binary relations, multivalued linear operators,
affine-periodic solutions and pseudo affine-periodic
solutions for various classes of systems of ordinary differential equations (Subsection \ref{affine}), we analyze multi-dimensional 
Bohr $({\mathcal B},I',\rho)$-almost periodic functions and 
multi-dimensional 
$({\mathcal B},I',\rho)$-uniformly recurrent type functions in Section \ref{ce-alm-per}. The basic notion is introduced in Definition \ref{nafaks123456789012345} and further analyzed in Proposition \ref{prcko}, Corollary \ref{rtanj}, Proposition \ref{adding}, Remark \ref{tomka} and Proposition \ref{bounded-pazi}. After that, we justify the introduction of notion in Example \ref{pripaz}-Example \ref{wseccha}.
The main structural characterizations of Bohr $({\mathcal B},I',\rho)$-almost periodic functions ($({\mathcal B},I',\sigma)$-uniformly recurrent functions) are given in Theorem \ref{lojalni}.
After that, we clarify Proposition \ref{superstebagT} and the supremum formula for $({\mathcal B},I',\rho)$-uniformly recurrent type functions
in Proposition \ref{deb}. The convolution invariance of Bohr $({\mathcal B},I',\rho)$-almost periodic ($({\mathcal B},I',\rho)$-uniformly recurrent) functions is investigated in Theorem \ref{milenko}; after that, in Theorem \ref{mkmk}, we analyze the invariance of Bohr $({\mathcal B},I',\rho)$-almost periodicity and $({\mathcal B},I',\sigma)$-uniform recurrence under the actions of the infinite convolution product 
\begin{align}\label{pikford}
{\bf t} \mapsto
\mapsto F({\bf t}):=\int^{t_{1}}_{-\infty}\int^{t_{2}}_{-\infty}\cdot \cdot \cdot \int^{t_{n}}_{-\infty} R({\bf t}-{\bf s})f({\bf s})\, d{\bf s},\quad {\bf t}\in {\mathbb R}^{n}.
\end{align}
We also clarify some obvious composition principles necessary for studying of the semilinear Cauchy problems. In Subsection \ref{nova}, we further analyze the notion of $T$-almost periodicity in the case that $Y={\mathbb C}^{n}$ is finite-dimensional and $T=A$ is a complex matrix of format $k\times k$, where $k\in {\mathbb N}.$ We provide many interesting results and observations in the case that $A$ is singular or non-singular; sometimes the geometry of region $I\subseteq {\mathbb R}^{n}$ is crucial for the validity of these results.
In the case that the matrix $A$ is singular, we construct an example of an $A$-almost periodic function $F : [0,\infty) \rightarrow {\mathbb C}^{2}$ which is not almost periodic.

Subsection \ref{mono-mono} thoroughly investigates ${\mathbb D}$-asymptotically Bohr $({\mathcal B},I',\rho)$-almost periodic type functions. In this subsection, we prove an extension type theorems for multi-dimensional $T$-almost periodic functions, where $T\in L(Y)$ is a linear isomorphism and the region $I$ satisfies certain properties (Theorem \ref{lenny-jassonceT}). The main aim of Section \ref{krizni-stab} is to analyze the corresponding problems for $({\bf \omega},\rho)$-periodic functions and $({\bf \omega}_{j},\rho_{j})_{j\in {\mathbb N}_{n}}$-periodic functions. The existence of mean value of multi-dimensional $\rho$-almost periodic functions is analyzed in Corollary \ref{rtanj1}, Corollary \ref{why} and an interesting problem concerning this issue is proposed at the end of the second section.
In the final section of paper, we provide several illustrative applications to the abstract (degenerate) Volterra integro-differential equations in Banach spaces.

Before explaining the notation and terminology used in the paper, the authors would like to express their sincere gratitude to Professor G. M. N'Gu\' er\' ekata, who invited them to write and publish a paper for the special issue of Applicable Analysis dedicated to the memory of  Professor A. A. Pankov. The monograph ``Bounded and Almost Periodic Solutions of Nonlinear Operator Differential
Equations'' published by Kluwer Acad. Publ. in 1990 can serve as an excellent source for the readers interested to acquire a basic knowledge about Bohr compactifications and almost periodic functions on topological groups; see also the research monographs \cite{pankov1}-\cite{pankov11} for some other scientific research interests of Professor A. A. Pankov.

\vspace{0.1cm}

\noindent {\bf Notation and terminology.} Suppose that $X,\ Y,\ Z$ and $ T$ are given non-empty sets. Let us recall that a binary relation between $X$ into $Y$
is any subset
$\rho \subseteq X \times Y.$ 
If $\rho \subseteq X\times Y$ and $\sigma \subseteq Z\times T$ with $Y \cap Z \neq \emptyset,$ then
we define $\rho^{-1} \subseteq Y\times X$
and
$\sigma \cdot  \rho =\sigma \circ \rho \subseteq X\times T$ by
$
\rho^{-1}:=\{ (y,x)\in Y\times X : (x,y) \in \rho \}
$
and
$$
\sigma \circ \rho :=\bigl\{(x,t) \in X\times T : \exists y\in Y \cap Z\mbox{ such that }(x,y)\in \rho\mbox{ and }
(y,t)\in \sigma \bigr\},
$$
respectively. As is well known, the domain and range of $\rho$ are defined by $D(\rho):=\{x\in X :
\exists y\in Y\mbox{ such that }(x,y)\in X\times Y \}$ and $R(\rho):=\{y\in Y :
\exists x\in X\mbox{ such that }(x,y)\in X\times Y\},$ respectively; $\rho (x):=\{y\in Y : (x,y)\in \rho\}$ ($x\in X$), $ x\ \rho \ y \Leftrightarrow (x,y)\in \rho .$
If $\rho$ is a binary relation on $X$ and $n\in {\mathbb N},$ then we define $\rho^{n}
$ inductively; $\rho^{-n}:=(\rho^{n})^{-1}$ and $\rho^{0}:=\Delta_{X}:=
\{(x,x) : x\in X\}.$ Set $\rho (X'):=\{y : y\in \rho(x)\mbox{ for some }x\in X'\}$ ($X'\subseteq X$) and
${\mathbb N}_{n}:=\{1,\cdot \cdot \cdot,n\}$ ($n\in {\mathbb N}$). For any set $A$ we define its power set $P(A):=\{ B \, |\,  B\subseteq A\}.$ An unbounded subset $A\subseteq {\mathbb N}$ is called syndetic if and only if there exists a strictly increasing sequence $(a_{n})$ of positive integers such that $A=\{ a_{n} : n\in {\mathbb N}\} $ and $\sup_{n\in {\mathbb N}}(a_{n+1}-a_{n})<+\infty.$ 

We assume henceforth that $(X,\| \cdot \|),$ $(Y, \|\cdot\|_Y)$ and $(Z, \|\cdot\|_Z)$ are three complex Banach spaces, $n\in {\mathbb N},$ ${\mathcal B}$ is a certain collection of subsets
of $X$ satisfying
that
for each $x\in X$ there exists $B\in {\mathcal B}$ such that $x\in B.$ By $L(X,Y)$ we denote the Banach space of all linear continuous functions from $X$ into $Y;$ $L(X)\equiv L(X,X).$  We will always use the principal branch of the exponential function to take the powers of complex numbers. 
A multivalued map (multimap) ${\mathcal A} : X \rightarrow P(Y)$ is said to be a multivalued
linear operator (MLO) if and only if the following holds:
\begin{itemize}
\item[(i)] $D({\mathcal A}) := \{x \in X : {\mathcal A}x \neq \emptyset\}$ is a linear subspace of $X$;
\item[(ii)] ${\mathcal A}x +{\mathcal A}y \subseteq {\mathcal A}(x + y),$ $x,\ y \in D({\mathcal A})$
and $\lambda {\mathcal A}x \subseteq {\mathcal A}(\lambda x),$ $\lambda \in {\mathbb C},$ $x \in D({\mathcal A}).$
\end{itemize}
If $X=Y,$ then we say that ${\mathcal A}$ is an MLO in $X.$ The sums, products and integer powers of MLOs are defined usually (see \cite[Section 1.2]{FKP} for more details about the subject). The kernel space $N({\mathcal A})$ of ${\mathcal A}$ is defined as the set of all $x\in D({\mathcal A})$ such that $0\in {\mathcal A}x.$ It is said that an MLO operator  ${\mathcal A} : X\rightarrow P(Y)$ is closed if and only if for any
sequences $(x_{n})$ in $D({\mathcal A})$ and $(y_{n})$ in $Y$ such that $y_{n}\in {\mathcal A}x_{n}$ for all $n\in {\mathbb N}$ we have that $\lim_{n \rightarrow \infty}x_{n}=x$ and
$\lim_{n \rightarrow \infty}y_{n}=y$ imply
$x\in D({\mathcal A})$ and $y\in {\mathcal A}x.$
We need the following lemma (see e.g., \cite[Theorem 1.2.3]{FKP}):

\begin{lem}\label{stana}
Suppose that ${\mathcal A} : X\rightarrow P(Y)$ is a closed \emph{MLO}, $\Omega$ is a locally compact, separable metric space\index{space!separable metric}
and $\mu$ is a locally finite
Borel measure\index{measure!locally finite Borel} defined on $\Omega.$
Let $f : \Omega \rightarrow X$ and $g : \Omega \rightarrow Y$ be $\mu$-integrable, and let $g(x)\in {\mathcal A}f(x),$ $x\in \Omega.$ Then $\int_{\Omega}f\, d\mu \in D({\mathcal A})$ and $\int_{\Omega}g\, d\mu\in {\mathcal A}\int_{\Omega}f\, d\mu.$
\end{lem}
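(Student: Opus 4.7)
The plan is to reduce the statement to the classical fact that the Bochner integral of a function taking values in a closed linear subspace of a Banach space again lies in that subspace. The first step is to package the multivalued relation as an honest subspace: since $\mathcal{A}$ is a closed MLO, its graph $G(\mathcal{A}):=\{(x,y)\in X\times Y : y\in \mathcal{A}x\}$ is a closed linear subspace of the product Banach space $E:=X\times Y$ endowed with the norm $\|(x,y)\|_{E}:=\|x\|+\|y\|_{Y}$. Linearity of $G(\mathcal{A})$ is immediate from the MLO axioms (i)--(ii), and closedness is exactly the sequential closedness hypothesis on $\mathcal{A}$ stated just before the lemma.

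Next, I would form the paired function $h:\Omega\to E$ defined by $h(\omega):=(f(\omega),g(\omega))$. Since $f$ and $g$ are strongly $\mu$-measurable, a componentwise application of the Pettis measurability theorem gives strong measurability of $h$, and $\|h(\omega)\|_{E}\leq \|f(\omega)\|+\|g(\omega)\|_{Y}$ together with the $\mu$-integrability of $f$ and $g$ yields Bochner integrability of $h$. A standard property of the Bochner integral, proved first on simple functions and then extended by continuity, then gives the identity $\int_{\Omega}h\,d\mu=\bigl(\int_{\Omega}f\,d\mu,\int_{\Omega}g\,d\mu\bigr)$. By hypothesis, $h(\omega)\in G(\mathcal{A})$ for every $\omega\in\Omega$.

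The core step is now the general fact: \emph{if $V$ is a closed linear subspace of a Banach space $E$ and $h:\Omega\to E$ is Bochner integrable with $h(\omega)\in V$ for all $\omega$, then $\int_{\Omega}h\,d\mu\in V$.} I would prove this by a Hahn--Banach argument: otherwise there exists $\varphi\in E^{\ast}$ with $\varphi|_{V}\equiv 0$ and $\varphi\bigl(\int_{\Omega}h\,d\mu\bigr)\neq 0$; but the commutation of continuous linear functionals with the Bochner integral gives $\varphi\bigl(\int_{\Omega}h\,d\mu\bigr)=\int_{\Omega}\varphi(h(\omega))\,d\mu=0$, a contradiction. Applying this with $E=X\times Y$ and $V=G(\mathcal{A})$ concludes that $\bigl(\int_{\Omega}f\,d\mu,\int_{\Omega}g\,d\mu\bigr)\in G(\mathcal{A})$, which unpacks precisely to $\int_{\Omega}f\,d\mu\in D(\mathcal{A})$ together with $\int_{\Omega}g\,d\mu\in \mathcal{A}\int_{\Omega}f\,d\mu$.

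The only point requiring care is the passage from separate $\mu$-integrability of $f$ and $g$ to Bochner integrability of $h$ in the product space; this is routine via Pettis' theorem and the equivalence of norms on finite products, and once this is in hand the rest of the argument is purely linear--topological and does not really use the multivalued nature of $\mathcal{A}$ beyond the closedness of its graph. Consequently I do not anticipate a serious obstacle; the proof is essentially a transcription of the well-known single-valued closed operator version (``closed operators commute with Bochner integrals'') into the graph picture that the MLO framework naturally suggests.
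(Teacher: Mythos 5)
Your argument is correct: the paper itself gives no proof of this lemma, quoting it verbatim from the cited reference (Theorem 1.2.3 of \cite{FKP}), and your reduction --- graph of a closed MLO as a closed linear subspace of $X\times Y$, Bochner integrability of the paired function $(f,g)$, and the Hahn--Banach argument showing that the Bochner integral of a function with values in a closed subspace remains in that subspace --- is precisely the standard proof of that cited result. No gaps; the only point worth stating explicitly is that $(0,0)\in G(\mathcal{A})$ (take $\lambda=0$ in axiom (ii)), which you need for $G(\mathcal{A})$ to be a subspace, and this is immediate.
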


If ${\bf t_{0}}\in {\mathbb R}^{n}$ and $\epsilon>0$, then we set $B({\bf t}_{0},\epsilon):=\{{\bf t } \in {\mathbb R}^{n} : |{\bf t}-{\bf t_{0}}| \leq \epsilon\},$
where $|\cdot|$ denotes the Euclidean norm in ${\mathbb R}^{n}.$ Set 
$I_{M}:=\{{\bf t}\in I : |{\bf t}| \geq M\}$ ($I\subseteq {\mathbb R}^{n};$ $M>0$).  Further on,
by $(e_{1},e_{2},\cdot \cdot \cdot,e_{n})$ we denote the standard basis of ${\mathbb R}^{n};$ $\langle \cdot, \cdot \rangle$ denotes the usual inner product in ${\mathbb R}^{n}.$

We will use the following definition from \cite{marko-manuel-ap}:

\begin{defn}\label{kompleks12345}
Suppose that 
${\mathbb D} \subseteq I \subseteq {\mathbb R}^{n}$ and the set ${\mathbb D}$  is unbounded. By $C_{0,{\mathbb D},{\mathcal B}}(I \times X :Y)$ we denote the vector space consisting of all continuous functions $Q : I \times X \rightarrow Y$ such that, for every $B\in {\mathcal B},$ we have $\lim_{t\in {\mathbb D},|t|\rightarrow +\infty}Q({\bf t};x)=0,$ uniformly for $x\in B.$
If $X=\{0\},$ then we abbreviate $C_{0,{\mathbb D},{\mathcal B}}(I \times X :Y)$ to 
$C_{0,{\mathbb D},{\mathcal B}}(I :Y);$ furthermore, if ${\mathbb D}=I,$ then we omit the term ``${\mathbb D}$'' from the notation.
\end{defn}

\subsection{Affine-periodic solutions and pseudo affine-periodic
solutions for various classes of systems of ordinary differential equations}\label{affine} In a great number of
recent research studies, the notions of affine-periodicity and pseudo affine-periodicity play
an incredible role in the qualitative analysis of solutions for various classes of systems of ordinary differential equations, systems of functional
differential equations and systems of Newtonian equations of motion
with friction; see e.g.,
\cite{affine-chang, affine-cheng, affine-li, affine-li1, affine-meng, affine-wang, affine-wang1, affine-xia, affine-zhang} for some results obtained by Chinese mathematicians in this direction. In this subsection, we will only insribe the main ideas of research studies carried out by
X. Chang, Y. Li in \cite{affine-chang} and Y. Li, H. Wang, X. Yang in \cite{affine-li1}.
By a $(Q, T)$ affine-periodic function $x: {\mathbb R} \rightarrow {\mathbb R}^{n}$
we mean any continuous function $x(\cdot)$ for which
$$
x(t+T)=Qx(t),\quad t\in {\mathbb R},
$$
where
$Q$ is a regular matrix of format $n\times n$ and $T>0.$ Some qualitative properties of $x(\cdot)$ like periodicity, subharmonicity, or quasi-periodicity are induced by the corresponding qualitative properties of matrix $ Q;$ the most important subclasses of regular matrices for $Q$ are
power identity matrices, i.e., those matrices for which we have $ Q^{k} = I $ for some integer $k\in {\mathbb Z},$ or orthogonal
matrices belonging to the group $O(n)$; these subclasses are important for modeling certain real phenomena describing rotation motions in body from mechanics.

In \cite{affine-chang},
X. Chang and Y. Li
have investigated the rotating periodic solutions of second-order dissipative dynamical systems. More precisely, the authors
have considered the following dissipative dynamical
system:
$$
u^{\prime \prime}+cu^{\prime}+\nabla g(u)+h(u)=e(t),\quad t\in {\mathbb R},
$$
where $c > 0$ is a constant, $g(u)=g(|u|),$ $h \in C({\mathbb R}^{n} : {\mathbb R}^{n}),$
$h(u)=Qh(Q^{-1}u)$ for some orthogonal matrix $Q\in O(n)$ and $e\in C({\mathbb R} : {\mathbb R}^{n})$
satisfies $e(t+T)=Qe(t)$ for all $t\in {\mathbb R}.$ It has been shown that the above equation admits a solution of the form $u(t + T) = Qu(t), $ $t\in {\mathbb R},$ which is usually called rotating periodic solution.

In \cite{affine-li1}, Y. Li, H. Wang and X. Yang
have analyzed Fink's conjecture on affine-periodic solutions and Levinson's conjecture to Newtonian systems. The authors have analyzed the following system of ordinary differential equations
$$
x^{\prime}(t)=f(t,x(t)),\quad t\in {\mathbb R},
$$
where $f \in  C({\mathbb R} \times {\mathbb R}^{n} : {\mathbb R}^{n}),$ $f(t,x) \equiv Qf(t,Q^{-1}x),$ the following system of functional-differential equations
$$
x^{\prime}(t)=F\bigl(t,x_{t}\bigr),\quad t\in {\mathbb R},
$$
where $x_{t}(s)=x(t+s)$ for $s\in [-r,0]$ and fixed $r>0,$ $F: {\mathbb R} \times C\rightarrow {\mathbb R}^{n}$ is continuous with $C$ being the Banach space of continuous functions $C([-r,0] : {\mathbb R}^{n})$ equipped with the sup-norm and
$F(t,\varphi) \equiv QF(t,Q^{-1}\varphi),$ and the following system of Newtonian equations of motion
with friction
$$
x^{\prime \prime}+A(t,x)u^{\prime}+\nabla V(x)+h(u)=e(t),\quad t\in {\mathbb R},
$$
where $A: {\mathbb R}\times {\mathbb R}^{m} \rightarrow {\mathbb R}^{m,m},$ $V : {\mathbb R}^{m}\rightarrow {\mathbb R}$ and $e :{\mathbb R} \rightarrow {\mathbb R}^{m}$ are continuous, $A(t,x)$ satisfies the local Lipschitz condition with respect to the variable $x$, $V(\cdot)$ is continuously differentiable and
 $A(t+T,x)y \equiv  QA(t,Q^{-1}x)Q^{-1}y,$ $\nabla V(x) \equiv Q\nabla V(Q^{-1}x),$ $e(t+T)\equiv Qe(t).$ Following the authors, such a vectorial equation is called $(Q, T)$ affine-periodic ordinary
differential equation, a $(Q, T)$ affine-periodic functional differential equation, or a
$(Q, T)$ affine-periodic Newtonian equation, respectively. Practically, the authors have essentially verified Levinson's conjecture for Newtonian systems with friction and proposed the problem of existence of
a $(Q, T)$ affine-periodic solution for a Newtonian system with friction.

Concerning $(Q, T)$ affine-periodic solutions of ordinary differential equations, we would like to note that we will consider here a singularly perturbed symmetric ODE with a symmetric
heteroclinic cycle connecting hyperbolic equilibria (see Subsection \ref{sec3}). We will analyze the accumulation of
$(Q,T)$ affine-periodic solutions 
on the heteroclinic cycles and the phenomenon of 
heteroclinic/homoclinic period blow-up. See also \cite{aizo, forced} and references cited therein.

\section{Multi-dimensional $\rho$-almost periodic type functions}\label{ce-alm-per}

In \cite{nova-mse}, we have recently introduced and analyzed the following notion with $\rho =c{\rm I},$ where ${\rm I}$ denotes the identity operator on $Y:$

\begin{defn}\label{nafaks123456789012345}
Suppose that $\emptyset  \neq I' \subseteq {\mathbb R}^{n},$ $\emptyset  \neq I \subseteq {\mathbb R}^{n},$ $F : I \times X \rightarrow Y$ is a continuous function, $\rho$ is a binary relation on $Y$ and $I +I' \subseteq I.$ Then we say that:
\begin{itemize}
\item[(i)]\index{function!Bohr $({\mathcal B},I',\rho)$-almost periodic}
$F(\cdot;\cdot)$ is Bohr $({\mathcal B},I',\rho)$-almost periodic if and only if for every $B\in {\mathcal B}$ and $\epsilon>0$
there exists $l>0$ such that for each ${\bf t}_{0} \in I'$ there exists ${\bf \tau} \in B({\bf t}_{0},l) \cap I'$ such that, for every ${\bf t}\in I$ and $x\in B,$ there exists an element $y_{{\bf t};x}\in \rho (F({\bf t};x))$ such that
\begin{align}\label{oblak}
\bigl\|F({\bf t}+{\bf \tau};x)-y_{{\bf t};x}\bigr\|_{Y} \leq \epsilon .
\end{align}
\item[(ii)] \index{function!$({\mathcal B},I',\rho)$-uniformly recurrent}
$F(\cdot;\cdot)$ is $({\mathcal B},I',\rho)$-uniformly recurrent if and only if for every $B\in {\mathcal B}$ 
there exists a sequence $({\bf \tau}_{k})$ in $I'$ such that $\lim_{k\rightarrow +\infty} |{\bf \tau}_{k}|=+\infty$ and that, for every ${\bf t}\in I$ and $x\in B,$ there exists an element $y_{{\bf t};x}\in \rho (F({\bf t};x))$ such that
\begin{align}\label{oblak1}
\lim_{k\rightarrow +\infty}\sup_{{\bf t}\in I;x\in B} \bigl\|F({\bf t}+{\bf \tau}_{k};x)-y_{{\bf t};x}\bigr\|_{Y} =0.
\end{align}
\end{itemize}
\end{defn}

It is clear that the  Bohr $({\mathcal B},I',\rho)$-almost periodicity of $F(\cdot;\cdot)$ implies the $({\mathcal B},I',\rho)$-uniform recurrence of $F(\cdot;\cdot)$; the converse statement is not true in general (\cite{nova-selected}).
In the case that $\rho=T :  Y \rightarrow Y$ is a single-valued function (not necessarily linear or continuous), then we obtain the most important case for our further investigations, when the function $F(\cdot;\cdot)$ is $({\mathcal B},I',T)$-almost periodic, resp. $({\mathcal B},I',T)$-uniformly recurrent. In the case that $X=\{0\}$ ($I'=I$), we omit the term ``${\mathcal B}$'' (``$I'$'') from the notation; furthermore, if $T=c{\rm I}$ for some complex number $c\in {\mathbb C} \setminus \{0\}$, then we also say that the function $F(\cdot;\cdot)$ is $({\mathcal B},I',c)$-almost periodic, resp. $({\mathcal B},I',c)$-uniformly recurrent.

In \cite[Proposition 2.6, Corollary 2.10, Proposition 2.11]{c1}, 
we have considered the question whether a given one-dimensional $c$-almost periodic function ($c$-uniformly recurrent function), where $c\in S_{1}\equiv \{ z\in {\mathbb C} : |z|=1\},$ is almost periodic (uniformly recurrent). Concerning this question for general binary relations in the multi-dimensional setting, we will state and prove the following result (see also Proposition \ref{tomqa} and Example \ref{tomqqa} below for the case that the space $Y$ is finite-dimensional and the equality $\tau +I=I$ is not satisfied for some points ${\bf \tau}\in I'$):

\begin{prop}\label{prcko}
Suppose that $\emptyset  \neq I' \subseteq {\mathbb R}^{n},$ $\emptyset  \neq I \subseteq {\mathbb R}^{n},$  $I +I' \subseteq I$, and the function $F : I \times X \rightarrow Y$ is Bohr $({\mathcal B},I',\rho)$-almost periodic ($({\mathcal B},I',\rho)$-uniformly recurrent), where $\rho$ is a binary relation on $Y$ satisfying $R(F)\subseteq D(\rho)$ and $\rho(y)$ is a singleton for any $y\in R(F).$ If for each ${\bf \tau}\in I'$ we have $\tau +I=I,$ then $I+(I'-I')\subseteq I$ and the function $F(\cdot;\cdot)$ is Bohr $({\mathcal B},I'-I',{\rm I})$-almost periodic ($({\mathcal B},I'-I',{\rm I})$-uniformly recurrent).
\end{prop}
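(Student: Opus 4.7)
The inclusion $I+(I'-I')\subseteq I$ is immediate from the hypothesis: since $\tau_2 +I=I$ for every $\tau_2\in I'$, translation by $\tau_2$ is a bijection of $I$, so $\mathbf{t}-\tau_2\in I$; then $(\mathbf{t}-\tau_2)+\tau_1\in I+I'\subseteq I$ for $\tau_1\in I'$. The real content of the proposition is that Bohr $(\mathcal{B},I',\rho)$-almost periodicity (respectively, $(\mathcal{B},I',\rho)$-uniform recurrence) passes to Bohr almost periodicity (respectively, uniform recurrence) of the usual type, with almost periods drawn from $I'-I'$. The singleton hypothesis on $\rho$ is crucial and will be the mechanism that makes the argument work: it guarantees that $y_{\mathbf{t};x}$ is the \emph{unique} element of $\rho(F(\mathbf{t};x))$, so it is a genuine function of $(\mathbf{t},x)$ and does not depend on the choice of almost period $\tau$. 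This is what allows us to cancel $y_{\mathbf{t}-\tau_2;x}$ in a triangle inequality and recover an estimate for $\|F(\mathbf{t}+\tau;x)-F(\mathbf{t};x)\|_Y$.

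For the Bohr case, I would fix $B\in\mathcal{B}$ and $\epsilon>0$, apply the given Bohr property with tolerance $\epsilon/2$ to obtain some $l>0$, then given $\mathbf{t}'_0\in I'-I'$ decompose $\mathbf{t}'_0=\mathbf{s}_1-\mathbf{s}_2$ with $\mathbf{s}_i\in I'$ and extract $\tau_i\in B(\mathbf{s}_i,l)\cap I'$. Setting $\tau:=\tau_1-\tau_2\in I'-I'$ gives $|\tau-\mathbf{t}'_0|\le 2l$. For any $\mathbf{t}\in I$, the point $\mathbf{t}-\tau_2$ lies in $I$, so applying the original Bohr estimate to this point once with almost period $\tau_1$ and once with $\tau_2$ yields
\[
\bigl\|F(\mathbf{t}+\tau_1-\tau_2;x)-y_{\mathbf{t}-\tau_2;x}\bigr\|_Y\le \epsilon/2,\qquad \bigl\|F(\mathbf{t};x)-y_{\mathbf{t}-\tau_2;x}\bigr\|_Y\le \epsilon/2,
\]
and the triangle inequality gives $\|F(\mathbf{t}+\tau;x)-F(\mathbf{t};x)\|_Y\le \epsilon$ uniformly for $\mathbf{t}\in I$, $x\in B$.

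For the uniformly recurrent case, take the sequence $(\tau_k)\subseteq I'$ with $|\tau_k|\to+\infty$ and set $\epsilon_k:=\sup_{\mathbf{t}\in I,x\in B}\|F(\mathbf{t}+\tau_k;x)-y_{\mathbf{t};x}\|_Y$, so $\epsilon_k\to 0$. After thinning, I may assume $|\tau_{k+1}|\ge |\tau_k|+k$, which forces $|\tau_{k+1}-\tau_k|\to+\infty$. Putting $\tau'_k:=\tau_{k+1}-\tau_k\in I'-I'$ and running the same two-step triangle argument at the point $\mathbf{t}-\tau_k\in I$ gives
\[
\sup_{\mathbf{t}\in I,x\in B}\bigl\|F(\mathbf{t}+\tau'_k;x)-F(\mathbf{t};x)\bigr\|_Y\le \epsilon_{k+1}+\epsilon_k\longrightarrow 0,
\]
which is the required $(\mathcal{B},I'-I',\mathrm{I})$-uniform recurrence.

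The only nontrivial point to watch is precisely the one flagged above: without the singleton condition, the elements appearing on the two sides of the triangle inequality would be \emph{some} elements of $\rho(F(\mathbf{t}-\tau_2;x))$ chosen independently, and they need not coincide, so the cancellation would fail. The remaining hypothesis $\tau+I=I$ for $\tau\in I'$ is used purely to know that $\mathbf{t}-\tau_2$ (and $\mathbf{t}-\tau_k$) lies in the domain $I$ where the original Bohr/recurrence estimates are available; so no further geometric or analytic obstacle arises.
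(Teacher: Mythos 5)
Your proof is correct and follows essentially the same route as the paper's: both arguments exploit the singleton hypothesis to get two estimates against the common value $\rho(F(\cdot;\cdot))$ and cancel it by the triangle inequality, and both use the hypothesis $\tau+I=I$ to convert the resulting bound into one for the identity relation with almost periods in $I'-I'$ (the paper substitutes ${\bf v}={\bf t}+\tau_{1}$ after the triangle inequality, whereas you shift the base point to ${\bf t}-\tau_{2}$ beforehand --- a purely cosmetic difference). The only genuine addition is that you spell out the uniformly recurrent case, including the thinning to force $|\tau_{k+1}-\tau_{k}|\rightarrow+\infty$, which the paper omits with the remark that it treats only the Bohr case.
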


\begin{proof}
We will consider only Bohr $({\mathcal B},I',\rho)$-almost periodic functions. Let $\tau\in I'-I'$ be given. Then there exist points $\tau_{1},\ \tau_{2}\in I'$ such that $\tau=\tau_{1}-\tau_{2};$ if ${\bf t}\in I,$ then the prescribed assumption $\tau_{2}+I=I$ implies the existence of a point ${\bf t}'\in I$ such that ${\bf t}=\tau_{2}+{\bf t}'.$ Hence, ${\bf t}+\tau=\tau_{1}+{\bf t}'\in I$ and therefore $I+(I'-I')\subseteq I$, as claimed. Further on, let $\epsilon>0$ and $B\in {\mathcal B}$ be given. Then there exists $l>0$ such that for each ${\bf t}_{0}^{1},\  {\bf t}_{0}^{2} \in I'$ there exist two points ${\bf \tau}_{1} \in B({\bf t}_{0}^{1},l) \cap I'$ and
${\bf \tau}_{2} \in B({\bf t}_{0}^{2},l) \cap I'$
such that, for every ${\bf t}\in I$ and $x\in B,$ we have
\begin{align*}
\bigl\|F\bigl({\bf t}+{\bf \tau}_{1};x\bigr)-\rho(F({\bf t};x))\bigr\|_{Y} \leq \epsilon/2 \ \ \mbox{ and }\ \ \bigl\|F\bigl({\bf t}+{\bf \tau}_{2};x\bigr)-\rho(F({\bf t};x))\bigr\|_{Y} \leq \epsilon/2.
\end{align*}
This implies
\begin{align*}
\bigl\|F\bigl({\bf t}+{\bf \tau}_{1};x\bigr)-F\bigl({\bf t}+{\bf \tau}_{2};x\bigr)\bigr\|_{Y} \leq \epsilon,\quad {\bf t}\in I,\ x\in B,
\end{align*}
i.e.,
\begin{align*}
\bigl\|F\bigl({\bf v}+\bigl[\tau_{2}-{\bf \tau}_{1}\bigr];x\bigr)-F\bigl({\bf v};x\bigr)\bigr\|_{Y} \leq \epsilon,\quad {\bf v}\in \tau_{1}+I=I,\ x\in B.
\end{align*}
Since $\tau_{2}-\tau_{1}\in B({\bf t}_{0}^{2}-{\bf t}_{0}^{1},2l) \cap (I'-I'),$ this simply implies the required conclusion by definition.
\end{proof}

\begin{cor}\label{rtanj}
Suppose that $\emptyset  \neq I' \subseteq {\mathbb R}^{n},$ and the function $F : {\mathbb R}^{n} \times X \rightarrow Y$ is Bohr $({\mathcal B},I',\rho)$-almost periodic ($({\mathcal B},I',\rho)$-uniformly recurrent), where $\rho$ is a binary relation on $Y$ satisfying $R(F)\subseteq D(\rho)$ and $\rho(y)$ is a singleton for any $y\in R(F).$ Then the function $F(\cdot;\cdot)$ is Bohr $({\mathcal B},I'-I',{\rm I})$-almost periodic ($({\mathcal B},I'-I',{\rm I})$-uniformly recurrent).
\end{cor}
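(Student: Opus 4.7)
The plan is essentially to observe that this corollary is the special case $I = \mathbb{R}^{n}$ of Proposition \ref{prcko}, so the proof reduces to checking that the hypotheses of that proposition hold automatically in this setting.

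First I would verify the inclusion hypothesis: taking $I = \mathbb{R}^{n}$, one has $I + I' = \mathbb{R}^{n} + I' = \mathbb{R}^{n} = I$, so $I + I' \subseteq I$ is trivial. Next I would verify the translation-invariance hypothesis: for any $\tau \in I'$, the map $\mathbf{t} \mapsto \mathbf{t} + \tau$ is a bijection of $\mathbb{R}^{n}$ onto itself, hence $\tau + I = \tau + \mathbb{R}^{n} = \mathbb{R}^{n} = I$. Thus both structural assumptions on the pair $(I, I')$ required by Proposition \ref{prcko} hold without any additional effort.

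The remaining hypotheses on $F$ and on $\rho$ (namely, $R(F) \subseteq D(\rho)$ and $\rho(y)$ being a singleton for every $y \in R(F)$) are carried over verbatim from the statement of the corollary. Hence I can invoke Proposition \ref{prcko} directly, which yields both $I + (I' - I') \subseteq I$ (trivial here, since again $I = \mathbb{R}^{n}$) and the desired Bohr $(\mathcal{B}, I' - I', \mathrm{I})$-almost periodicity (respectively, $(\mathcal{B}, I' - I', \mathrm{I})$-uniform recurrence) of $F(\cdot;\cdot)$.

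There is essentially no obstacle: the entire argument is a one-line specialization, and the substantive work has already been done in the proof of Proposition \ref{prcko}. The only thing to be mildly careful about is to make clear that one really is applying the proposition with $I = \mathbb{R}^{n}$ and that the singleton-valued restriction of $\rho$ to $R(F)$ is what allows the cancellation $F(\mathbf{t} + \tau_{1};x) - F(\mathbf{t} + \tau_{2};x)$ performed inside that proof to produce a genuine $(\mathcal{B}, I' - I', \mathrm{I})$-type estimate rather than merely a $\rho$-type estimate.
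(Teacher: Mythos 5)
Your proposal is correct and matches the paper's intent exactly: the corollary is stated without a separate proof precisely because it is the specialization $I=\mathbb{R}^{n}$ of Proposition \ref{prcko}, where both $I+I'\subseteq I$ and $\tau+I=I$ for every $\tau\in I'$ hold trivially. Nothing further is needed.
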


In the case that $I={\mathbb R},$ then Corollary \ref{rtanj} enables one to see that any $c$-uniformly recurrent function $F : {\mathbb R} \rightarrow Y$, where $c\in {\mathbb C}\setminus \{0\}$, is uniformly recurrent (see \cite[Definition 2.3]{c1}). Strictly speaking, this is a new result which is not clarified in \cite{c1}: more precisely, in \cite[Corollary 2.10, Proposition 2.11(ii)]{c1}, we have proved that any bounded $c$-uniformly recurrent function $F : I \rightarrow Y$, where $c\in {\mathbb C}\setminus \{0\}$ satisfies $|c|=1$ and $I={\mathbb R}$ or $I=[0,\infty),$ is uniformly recurrent (if $|c|\neq 1,$ then the unique $c$-uniformly recurrent function $F : I \rightarrow Y$ is the zero function; see \cite[Proposition 2.6]{c1}).

We also have the following important corollary of Proposition \ref{prcko}:

\begin{cor}\label{rtanj1}
Suppose that $\emptyset  \neq I' \subseteq {\mathbb R}^{n},$ $I'-I'={\mathbb R}^{n}$ and the function $F : {\mathbb R}^{n} \rightarrow Y$ is Bohr $(I',\rho)$-almost periodic, where $\rho$ is a binary relation on $Y$ satisfying $R(F)\subseteq D(\rho)$ and $\rho(y)$ is a singleton for any $y\in R(F).$ Then the function $F(\cdot)$ has the mean value
\begin{align}\label{idiote}
{\mathcal M}_{\lambda}(F):=\lim_{T\rightarrow +\infty}\frac{1}{(2T)^{n}}\int_{K_{T}}e^{-i\langle \lambda, {\bf t}\rangle} F({\bf t})\, d{\bf t}=\lim_{T\rightarrow +\infty}\frac{1}{T^{n}}\int_{L_{T}}e^{-i\langle \lambda, {\bf t}\rangle} F({\bf t})\, d{\bf t}
\end{align}
for any $\lambda \in {\mathbb R}^{n}$, and the set of all points $\lambda \in {\mathbb R}^{n}$ for which ${\mathcal M}_{\lambda}(F)\neq 0$ is at most countable; here, for every $T>0,$
$
K_{T}:=\{ (t_{1},t_{2},\cdot \cdot \cdot, t_{n})\in {\mathbb R}^{n} : |t_{i}| \leq T,\ i\in {\mathbb N}_{n}\}$ and $L_{T}
:=\{(t_{1},t_{2},\cdot \cdot \cdot, t_{n})\in [0,\infty)^{n} : t_{i} \leq T,\ i\in {\mathbb N}_{n}\}.
$
\end{cor}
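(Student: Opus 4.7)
The plan is to reduce the statement to the classical theory of multi-dimensional Bohr almost periodic functions and then invoke standard facts about mean values and spectra. Applying Corollary~\ref{rtanj} with $X=\{0\}$ (so the collection $\mathcal{B}$ is irrelevant and the required condition $\tau+\mathbb{R}^{n}=\mathbb{R}^{n}$ is automatic), the Bohr $(I',\rho)$-almost periodicity of $F$ together with the singleton-valuedness of $\rho$ on $R(F)$ yields Bohr $(I'-I',\mathrm{I})$-almost periodicity. Since $I'-I'=\mathbb{R}^{n}$, this is precisely the classical Bohr almost periodicity of $F:\mathbb{R}^{n}\to Y$: for every $\epsilon>0$ the set of translation numbers $\{{\bf \tau}\in\mathbb{R}^{n}:\sup_{{\bf t}\in\mathbb{R}^{n}}\|F({\bf t}+{\bf \tau})-F({\bf t})\|_{Y}\le\epsilon\}$ is relatively dense in $\mathbb{R}^{n}$. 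In particular, $F$ is bounded and uniformly continuous.

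Next I would verify the existence of $\mathcal{M}_{\lambda}(F)$ as the limit over $K_{T}$ for every $\lambda\in\mathbb{R}^{n}$. The scalar multiplier ${\bf t}\mapsto e^{-i\langle\lambda,{\bf t}\rangle}$ is bounded and uniformly continuous, and its own set of $\epsilon$-periods is relatively dense in $\mathbb{R}^{n}$; one then checks directly from the definition that $e^{-i\langle\lambda,\cdot\rangle}F(\cdot)$ is again classically Bohr almost periodic in $Y$. For such a function the mean over the symmetric cubes $K_{T}$ exists by the standard Cauchy-sequence argument based on the relative density of $\epsilon$-periods (the Banach-valued, $n$-dimensional version being a routine extension of the scalar one-dimensional statement recorded in the monographs cited in the introduction). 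The at-most-countability of the set $\{\lambda\in\mathbb{R}^{n}:\mathcal{M}_{\lambda}(F)\ne 0\}$ then follows from the usual Bessel-type inequality for the Bohr--Fourier coefficients of an almost periodic function.

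It remains to identify the limit over $L_{T}$ with the limit over $K_{T}$. The cleanest route is the Bochner--Fej\'er approximation theorem: given $\epsilon>0$, choose a trigonometric polynomial $P({\bf t})=\sum_{k=1}^{N}a_{k}e^{i\langle\mu_{k},{\bf t}\rangle}$ with coefficients $a_{k}\in Y$ and $\sup_{{\bf t}\in\mathbb{R}^{n}}\|F({\bf t})-P({\bf t})\|_{Y}<\epsilon$. For each summand the weighted averages over $K_{T}$ and over $L_{T}$ factor as products of one-dimensional integrals in each coordinate, and both tend to $a_{k}$ when $\mu_{k}=\lambda$ and to $0$ otherwise; hence the two averages of $e^{-i\langle\lambda,\cdot\rangle}P(\cdot)$ converge to the same value as $T\to+\infty$. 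Letting $\epsilon\downarrow 0$ transfers the equality from $P$ to $F$ and identifies the common limit with $\mathcal{M}_{\lambda}(F)$. The main obstacle I anticipate is precisely the Bochner--Fej\'er step in the Banach-valued $n$-dimensional setting, but this is well-established and implicit in the references collected in the introduction.
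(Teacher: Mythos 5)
Your proposal is correct and follows exactly the route the paper intends: the paper states this result without a separate proof, presenting it as an immediate consequence of Proposition \ref{prcko} (via Corollary \ref{rtanj} with $I={\mathbb R}^{n}$, where $\tau+I=I$ holds automatically), which reduces $F$ to a classical Bohr almost periodic function on ${\mathbb R}^{n}$ since $I'-I'={\mathbb R}^{n}$, after which the existence of the mean value over $K_{T}$ and $L_{T}$ and the countability of the Bohr spectrum are standard facts from the cited literature. Your additional details (Cauchy argument for the mean, Bessel-type inequality, Bochner--Fej\'er identification of the two limits) are the correct way to fill in those classical steps.
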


For our later purposes, it would be very useful to formulate the following result:

\begin{prop}\label{adding}
Suppose that $\emptyset  \neq I' \subseteq {\mathbb R}^{n},$ $\emptyset  \neq I \subseteq {\mathbb R}^{n},$ $I +I' \subseteq I,$ $\rho =A$ is a linear non-injective operator on $Y$, and $G : I \times X \rightarrow Y$ is a Bohr $({\mathcal B},I',A)$-almost periodic ($({\mathcal B},I',A)$-uniformly recurrent) function. Let $Q : I \times X \rightarrow N(A)$ be any continuous function satisfying that for each $B\in {\mathcal B}$ we have $\lim_{|{\bf t}|\rightarrow +\infty, {\bf t}\in I+I'}Q({\bf t};x)=0,$ uniformly for $x\in B.$ Suppose, further,  that the following condition holds:
\begin{itemize}
\item[(D)] For every ${\bf t}_{0} \in I',$ for every $l>0$ and for every $l'\geq 2l,$ there exists ${\bf t}_{0}'\in I'$ such that $B({\bf t}_{0}',l) \subseteq B({\bf t}_{0},2l')$ and $|{\bf t}+\tau|\geq l'$ for all ${\bf t}\in I$ and $\tau \in B({\bf t}_{0}',l).$
\end{itemize}
Then the function $F=G+Q : I \times X \rightarrow Y$ is likewise Bohr $({\mathcal B},I',A)$-almost periodic ($({\mathcal B},I',A)$-uniformly recurrent). 
\end{prop}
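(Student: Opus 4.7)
The plan is to exploit two elementary observations. First, since $\rho=A$ is a single-valued linear operator and $Q({\bf t};x)\in N(A)$ for every ${\bf t}\in I$ and $x\in X$, we have $AF({\bf t};x)=AG({\bf t};x)+AQ({\bf t};x)=AG({\bf t};x)$, so the element $y_{{\bf t};x}$ demanded by Definition \ref{nafaks123456789012345} is forced to equal the unique element $AG({\bf t};x)$ of $\rho(F({\bf t};x))=\{AF({\bf t};x)\}$. Second, the triangle inequality gives
$$
\bigl\|F({\bf t}+\tau;x)-AG({\bf t};x)\bigr\|_{Y} \leq \bigl\|G({\bf t}+\tau;x)-AG({\bf t};x)\bigr\|_{Y} + \bigl\|Q({\bf t}+\tau;x)\bigr\|_{Y},
$$
so it suffices to produce a single $\tau\in I'$ making both summands small simultaneously.

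For the Bohr $({\mathcal B},I',A)$-almost periodic case, I would fix $B\in{\mathcal B}$ and $\epsilon>0$. From $G$ being Bohr $({\mathcal B},I',A)$-almost periodic I extract an $l>0$ corresponding to $\epsilon/2$; from the decay of $Q$ I extract $M>0$ such that $\|Q(s;x)\|_{Y}\leq\epsilon/2$ whenever $s\in I+I'$, $|s|\geq M$, and $x\in B$. Setting $l':=\max(2l,M)\geq 2l$, condition (D) applied to an arbitrary ${\bf t}_{0}\in I'$ with parameters $l$ and $l'$ yields a point ${\bf t}_{0}'\in I'$ such that $B({\bf t}_{0}',l)\subseteq B({\bf t}_{0},2l')$ and $|{\bf t}+\tau|\geq l'\geq M$ for all ${\bf t}\in I$ and $\tau\in B({\bf t}_{0}',l)$. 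Applying the Bohr property of $G$ \emph{centred at ${\bf t}_{0}'$} then produces a $\tau\in B({\bf t}_{0}',l)\cap I'$ for which the $G$-summand is bounded by $\epsilon/2$; since ${\bf t}+\tau\in I+I'$ and $|{\bf t}+\tau|\geq M$, the $Q$-summand is also bounded by $\epsilon/2$. Since $\tau\in B({\bf t}_{0},2l')$, the quantity $2l'$ (depending only on $B$ and $\epsilon$) serves as the required relative-density radius for $F$.

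The uniformly recurrent case rests on the same decomposition. The goal is to produce a sequence $(\sigma_{k})\subseteq I'$ with $|\sigma_{k}|\to+\infty$ along which both $\sup_{{\bf t}\in I,\,x\in B}\|G({\bf t}+\sigma_{k};x)-AG({\bf t};x)\|_{Y}\to 0$ and $\inf_{{\bf t}\in I}|{\bf t}+\sigma_{k}|\to+\infty$; the latter, combined with the decay hypothesis on $Q$, forces $\sup_{{\bf t}\in I,\,x\in B}\|Q({\bf t}+\sigma_{k};x)\|_{Y}\to 0$ as well. The first property is delivered by the given recurrence sequence $({\bf \tau}_{k})$ of $G$, while the second is precisely the geometric content extracted from condition (D). The main obstacle — and the reason (D) is essential here — is fusing the two: the given sequence $({\bf \tau}_{k})$ need not stay far from $-I$, while the points produced by (D) are not a priori near-almost-periods of $G$. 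I expect this to be handled by a diagonal/subsequence extraction, using (D) iteratively to reposition selected ${\bf \tau}_{k}$ into the ``far from $-I$'' region of $I'$ while controlling the consequent perturbation of the $G$-estimate.
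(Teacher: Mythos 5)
Your treatment of the Bohr case is essentially the paper's own proof: the same splitting $\|F({\bf t}+\tau;x)-AG({\bf t};x)\|_{Y}\leq\|G({\bf t}+\tau;x)-AG({\bf t};x)\|_{Y}+\|Q({\bf t}+\tau;x)\|_{Y}$ justified by $R(Q)\subseteq N(A)$, the same choice of $l'$ dominating both $2l$ and the decay threshold of $Q$, the same invocation of condition (D) to recentre the search for a near-period at ${\bf t}_{0}'$, and the same conclusion that $2l'$ serves as the relative-density radius. For the uniformly recurrent case the paper offers no argument at all (it explicitly restricts to the Bohr case), and your sketch correctly isolates the genuine obstruction — the prescribed recurrence sequence $(\tau_{k})$ of $G$ cannot be relocated by (D) without losing the near-recurrence property — but the proposed diagonal extraction is not carried out and it is not evident that it can be; so on that half you have honestly identified a gap rather than closed it, which is nonetheless more than the paper does.
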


\begin{proof}
We will consider only Bohr $({\mathcal B},I',A)$-almost periodic functions.
Let $B\in {\mathcal B}$ and $\epsilon>0$ be fixed. 
Then there exists $l>0$ such that for each ${\bf t}_{0} \in I'$ there exists ${\bf \tau} \in B({\bf t}_{0},l) \cap I'$ such that, for every ${\bf t}\in I$ and $x\in B,$ the element $y_{{\bf t};x}= A (G({\bf t};x))$ satisfies \eqref{oblak} with the function $F(\cdot;\cdot)$ and the number $\epsilon$ replaced respectively with the function $G(\cdot;\cdot)$ and the number $\epsilon/2.$
Our assumption implies the existence of a finite real number $l_{0}(\epsilon)>0$ such that for each ${\bf t}\in I+I'$ with $|{\bf t}|\geq l_{0}(\epsilon)$ and $x\in B$ we have
$\| Q({\bf t};x)\|_{Y}\leq \epsilon/2.$ Take now $l':=2\max(l,l_{0}(\epsilon)).$ Then the requirements of Definition \ref{nafaks123456789012345} are satisfied with the number $l$ replaced therein with the number $2l'.$ In actual fact, let a point ${\bf t}_{0}\in I'$ be given. Then we can find a point ${\bf t}_{0}'\in I'$ in accordance with condition (D). Since $G(\cdot;\cdot)$ is Bohr $({\mathcal B},I',A)$-almost periodic, we have the existence of a point $\tau \in B({\bf t}_{0}',l) \cap I'$ such that $\| G({\bf t}+\tau;x)-AG({\bf t};x)\|_{Y}\leq \epsilon/2$ for all ${\bf t}\in I$ and $x\in B.$ Due to (D), we have $\tau \in B({\bf t}_{0},2l') \cap I'$ and $|{\bf t}+\tau|\geq l'$ for all ${\bf t}\in I$; hence, $\| Q({\bf t}+\tau;x)\|_{Y}\leq \epsilon/2$ for all ${\bf t}\in I$ and $x\in B$, which simply implies the required statement since $R(Q)\subseteq N(A)$.
\end{proof}

\begin{rem}\label{tomka}
Condition (D) is valid in the case that $I=[0,\infty)$ and $I'=(0,\infty).$ Then, for every $t_{0}>0,$ $l>0$ and $l'\geq 2l,$ we can take $t_{0}':=t_{0}+2l'-l.$
\end{rem}

It is worth noting that the statements of \cite[Proposition 2.16, Proposition 2.21]{c1} can be reformulated for $T$-almost periodicity, where $T\in L(Y)$ is a linear isomorphism since, in this case, the estimate \eqref{oblak} implies $\| T^{-1}F({\bf t}+\tau;x)-F({\bf t};x)\|_{Y} \leq \epsilon \cdot \| T^{-1}\|_{L(Y)}$ for all ${\bf t}\in I$ and $x\in B.$ The proofs are very similar to those in which $T={\rm I}$ and therefore omitted:

\begin{prop}\label{bounded-pazi}
Suppose that $\rho=T\in L(Y)$ is a linear isomorphism.
\begin{itemize}
\item[(i)]
Suppose that $\emptyset  \neq I \subseteq {\mathbb R}^{n},$ $I +I \subseteq I,$ $I$ is closed,
$F : I \times X \rightarrow Y$ is Bohr $({\mathcal B},T)$-almost periodic and ${\mathcal B}$ is any family of compact subsets of $X.$ If
\begin{align*}
(\forall l>0) \, (\exists {\bf t_{0}}\in I)\, (\exists k>0) &\, (\forall {\bf t} \in I)(\exists {\bf t_{0}'}\in I)\,
\\ & (\forall {\bf t_{0}''}\in B({\bf t_{0}'},l) \cap I)\, {\bf t}- {\bf t_{0}''} \in B({\bf t_{0}},kl) \cap I,
\end{align*}
then for each $B\in {\mathcal B}$ we have 
that the set $\{ F({\bf t}; x) : {\bf t} \in I,\ x\in B\}$ is relatively compact in $Y;$
in particular,
$\sup_{{\bf t}\in I;x\in B}\|F({\bf t}; x)\|_{Y}<\infty.$
\item[(ii)] Suppose that $\emptyset  \neq I \subseteq {\mathbb R}^{n},$ $I +I \subseteq I,$ $I$ is closed and $F : I \times X \rightarrow Y$ is Bohr $({\mathcal B},T)$-almost periodic, where ${\mathcal B}$ is a family consisting of some compact subsets of $X.$ If the following condition holds
\begin{align*}
(\exists {\bf t_{0}}\in I)\, (\forall \epsilon>0)
(\forall l>0) \, (\exists l'>0) &\, (\forall {\bf t'},\ {\bf t''} \in I) \\ & B({\bf t_{0}},l) \cap I \subseteq B({\bf t_{0}-t'},l') \cap  B({\bf t_{0}-t''},l') ,
\end{align*}
then for each $B\in {\mathcal B}$ the function $F(\cdot;\cdot)$ is uniformly continuous on $I\times B.$ 
\end{itemize}
\end{prop}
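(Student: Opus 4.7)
The plan is to exploit the observation recorded immediately before the proposition: since $T$ is a linear isomorphism, the defining inequality \eqref{oblak} with $\rho = T$ can be rewritten, after applying $T^{-1}$, as
$$
\bigl\| T^{-1}F(\mathbf{t}+\tau;x) - F(\mathbf{t};x) \bigr\|_Y \leq \epsilon\cdot\|T^{-1}\|_{L(Y)},\quad \mathbf{t}\in I,\ x\in B.
$$
This reduces both statements to their $T=\mathrm{I}$ counterparts up to the fixed multiplicative factor $\|T^{-1}\|_{L(Y)}$, so the arguments are parallel to those of \cite[Propositions 2.16 and 2.21]{c1}.

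For (i), fix $B \in \mathcal{B}$ and $\epsilon>0$, and produce the Bohr number $l>0$ for tolerance $\epsilon/\|T^{-1}\|_{L(Y)}$. Apply the geometric hypothesis with this $l$ to extract $\mathbf{t}_{0} \in I$ and $k>0$. Since $I$ is closed and $B$ is compact, the continuous image $K := \{F(\mathbf{s};x) : \mathbf{s} \in B(\mathbf{t}_{0},kl)\cap I,\ x\in B\}$ is compact in $Y$. Given any $\mathbf{t} \in I$, the geometric condition furnishes $\mathbf{t}_{0}' \in I$, and Bohr $(\mathcal{B},T)$-almost periodicity applied at $\mathbf{t}_{0}'$ produces $\tau \in B(\mathbf{t}_{0}',l)\cap I$ satisfying the $T$-shift estimate. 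Since this $\tau$ is an admissible choice of $\mathbf{t}_{0}''$ in the geometric condition, we obtain $\mathbf{t}-\tau \in B(\mathbf{t}_{0},kl)\cap I$. Evaluating the displayed identity at $\mathbf{t}-\tau$ yields $\| T^{-1}F(\mathbf{t};x) - F(\mathbf{t}-\tau;x)\|_Y \leq \epsilon$, so $T^{-1}F(\mathbf{t};x)$ lies in an $\epsilon$-neighborhood of $K$. As $\epsilon$ was arbitrary, $T^{-1}\{F(\mathbf{t};x) : \mathbf{t}\in I,\ x\in B\}$ is totally bounded; completeness of $Y$ together with the continuity of $T$ upgrades this to the relative compactness of $\{F(\mathbf{t};x) : \mathbf{t}\in I,\ x\in B\}$, which in particular delivers the boundedness clause.

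For (ii), fix $B \in \mathcal{B}$ and $\epsilon>0$, let $\mathbf{t}_{0}$ be supplied by the geometric hypothesis, choose a Bohr number $l>0$ for tolerance $\epsilon/3$, and let $l'$ be the quantity produced by the geometric condition from $(\epsilon,l)$. Continuity of $F$ on the compact rectangle $[B(\mathbf{t}_{0},l')\cap I]\times B$ furnishes a modulus $\delta>0$ keeping $F$ within $\epsilon/(3\|T\|_{L(Y)})$ whenever the arguments vary by at most $\delta$. For $\mathbf{t}',\mathbf{t}''\in I$ with $|\mathbf{t}'-\mathbf{t}''|<\delta$ and $x,x' \in B$ with $\|x-x'\|<\delta$, the geometric condition (which quantifies over $\mathbf{t}',\mathbf{t}''$ simultaneously) lets us pick a single Bohr almost period $\tau$ realizing both shifts, i.e.\ $\mathbf{t}' = \mathbf{s}' + \tau$ and $\mathbf{t}'' = \mathbf{s}'' + \tau$ with $\mathbf{s}',\mathbf{s}'' \in B(\mathbf{t}_{0},l')\cap I$. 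Two uses of Bohr $(\mathcal{B},T)$-almost periodicity, one use of the local modulus through $T$, and the triangle inequality then yield $\|F(\mathbf{t}';x)-F(\mathbf{t}'';x')\|_Y \leq \epsilon$, establishing uniform continuity of $F$ on $I \times B$.

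The principal obstacle is the geometric bookkeeping. In (i) one must verify that the almost period $\tau$ returned by the Bohr condition is itself one of the admissible $\mathbf{t}_{0}''$, so that the residue $\mathbf{t}-\tau$ actually falls into the pre-selected precompact reservoir $B(\mathbf{t}_{0},kl)\cap I$. In (ii) one needs a \emph{single} $\tau$ to work for both $\mathbf{t}'$ and $\mathbf{t}''$ at once, which is precisely why the hypothesis quantifies over both points jointly. Once these matchings are in place, the $T$-twisted algebra reduces to the routine triangle-inequality manipulations used in the $T=\mathrm{I}$ setting, which explains the authors' decision to omit the proof.
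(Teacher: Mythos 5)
The paper itself omits this proof, saying only that one should combine the reduction $\|T^{-1}F({\bf t}+\tau;x)-F({\bf t};x)\|_{Y}\leq \epsilon\cdot\|T^{-1}\|_{L(Y)}$ with the arguments of \cite[Propositions 2.16, 2.21]{c1}; your reconstruction follows exactly that route. Your part (i) is correct: you use the geometric hypothesis precisely where it is needed, namely to certify that the residue ${\bf t}-\tau$ lands in the compact reservoir $B({\bf t_{0}},kl)\cap I$ (in particular that it lies in $I$ at all, so the Bohr estimate may legitimately be evaluated there), and total boundedness of $T^{-1}\{F({\bf t};x)\}$ together with completeness of $Y$ and continuity of $T$ gives relative compactness of the range.

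In part (ii), however, the matching with the geometric hypothesis is written the wrong way around. For $\tau\in B({\bf t_{0}},l)\cap I$, the inclusion $B({\bf t_{0}},l)\cap I\subseteq B({\bf t_{0}}-{\bf t}',l')$ says $|\tau-({\bf t_{0}}-{\bf t}')|\leq l'$, i.e.\ ${\bf t}'+\tau\in B({\bf t_{0}},l')$; it does not place ${\bf t}'-\tau$ there, and ${\bf t}'-\tau$ need not even belong to $I$, so your decomposition ${\bf t}'={\bf s}'+\tau$ with ${\bf s}'\in B({\bf t_{0}},l')\cap I$ is not justified by the stated condition. The repair is to translate forward: set ${\bf s}':={\bf t}'+\tau$ and ${\bf s}'':={\bf t}''+\tau$, which lie in $B({\bf t_{0}},l')\cap I$ by the hypothesis and $I+I\subseteq I$, and satisfy $|{\bf s}'-{\bf s}''|=|{\bf t}'-{\bf t}''|$; then use the Bohr estimate to replace $TF({\bf t}';x)$ by $F({\bf s}';x)$ (and likewise for ${\bf t}''$), apply the local modulus of continuity on the compact rectangle $[B({\bf t_{0}},l')\cap I]\times B$, and only at the end apply $T^{-1}$. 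Accordingly the constant in the local modulus should be $\epsilon/(3\|T^{-1}\|_{L(Y)})$ rather than $\epsilon/(3\|T\|_{L(Y)})$. With this one-line correction the argument is complete and coincides with what the authors intended.
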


We continue by providing some illustrative examples:

\begin{example}\label{pripaz}
There is no need to say that the linearity of operator $T$ is crucial sometimes; suppose, for simplicity, that $\rho=T\in L(Y),$ $I=I'=[0,\infty)$ or $I=I'={\mathbb R},$ and $X=\{0\}$ (see also \cite[Proposition 2.9]{c1} and \cite[Proposition 2.13]{nova-mse}). 
For each $t\in I,$ $\tau \in I$ and $l\in {\mathbb{N}},$ we have \newline
\begin{align*}
F\bigl(t +l\tau \bigr)-T^{l}F(t) 
=\sum_{j=0}^{l-1}T^{j}\Bigl[F\bigl(t+(l-j)\tau \bigr)-TF\bigl(t+(l-j-1)\tau \bigr)\Bigr].
\end{align*}
Hence, 
\begin{equation*}
\Bigl\|F\bigl(\cdot +l\tau \bigr)-T^{l}F(\cdot )\Bigr\|_{Y}\leq l
\Bigl\|F\bigl(\cdot +\tau \bigr)-TF(\cdot )\Bigr\|_{Y},
\end{equation*}
which implies that the function
$F(\cdot)$ is $T^{l}$-almost periodic ($T^{l}$-uniformly recurrent), provided that $F(\cdot)$ is $T$-almost periodic ($T$-uniformly recurrent). In particular, the function $F(\cdot)$ is almost periodic (uniformly recurrent), provided that $F(\cdot)$ is $T$-almost periodic ($T$-uniformly recurrent) and there exists a positive integer $l\in {\mathbb N}$ such that $T^{l}={\rm I}.$
\end{example}

\begin{example}\label{seccha}
The following example is taken from \cite{nova-selected}, where we have considered case $c=1,$ only; this example shows that it is slightly redundant to assume that
$I'\subseteq I$ in Definition \ref{nafaks123456789012345}.

Suppose that $L>0$ is a fixed real number as well as that the function $t\mapsto (f(t),g(t)),$ $t\in {\mathbb R}$ is $c$-almost periodic. Set $I:=\{(x,y) \in {\mathbb R}^{2} : |x-y| \geq L\},$ $I':=\{(\tau,\tau) : \tau\in {\mathbb R}\}$ and
$$
u(x,y):=\frac{f(x)+g(y)}{x-y},\quad (x,y)\in I.
$$
Then $I+I'\subseteq I$ but $I'$ is not a subset of $I.$ Furthermore, if $\epsilon>0$ is given and $\tau >0$ is a common $(\epsilon ,c)$-period of the functions $f(\cdot)$ and $g(\cdot)$, then we have:
\begin{align*}
\|u(x+\tau,y+\tau)-cu(x,y)\| &\leq \frac{\|f(x+\tau)-cf(x)\|+\|g(y+\tau)-cg(y)\|}{|x-y|}
\\& \leq 2\epsilon/L,\quad (x,y)\in I.
\end{align*}
This implies that the function $u(\cdot,\cdot)$ is Bohr $(I',c)$-almost periodic. Observe, finally, that under some regularity conditions on the functions $f(\cdot)$ and $g(\cdot),$ the function $u(\cdot,\cdot)$ is a solution of the partial differential equation
$$
u_{xy}-\frac{u_{x}}{x-y}+\frac{u_{y}}{x-y}=0.
$$
\end{example}

\begin{example}\label{wseccha}
The conclusions established in \cite[Example 2.5, Example 2.12]{nova-mse} (cf. also \cite[Example 2.13, Example 2.15]{marko-manuel-ap}) for $c$-almost periodicity can be formulated for $T$-almost periodicity, where $T\in L(Y),$ providing certain additional assumptions. For simplicity, let us consider the
concrete situation  
of \cite[Example 2.5(i)]{nova-mse}. Suppose that $F_{j} : X \rightarrow Y$ is a continuous function, for each $B\in {\mathcal B}$ we have $\sup_{x\in B}\| F_{j}(x) \|_{Y}<\infty,$ 
and the $Z$-valued mapping $t\mapsto (\int_{0}^{t}f_{1}(s)\, ds,\cdot \cdot \cdot, \int_{0}^{t}f_{n}(s)\, ds),$  $t\geq 0$ is bounded and $T$-almost periodic ($1\leq j \leq n$). Suppose, further, that the multiplication
$\cdot : Z\times Y \rightarrow Y$ is defined and satisfies:
\begin{itemize}
\item[(i)] there exists a finite real constant $c>0$ such that 
$\| zy\|_{Y} \leq c\|z\|_{Z}\|y\|_{Y}$ for all $z\in Z$ and $y\in Y;$
\item[(ii)] $z_{1}y-z_{2}y=(z_{1}-z_{2})y$ for all $z_{1},\ z_{2}\in Z$ and $y\in Y;$
\item[(iii)] $zy_{1}-zy_{2}=z(y_{1}-y_{2})$ for all $z\in Z$ and $y_{1},\ y_{2}\in Y$.
\end{itemize}
Set
\begin{align*}
F\bigl(t_{1},\cdot \cdot \cdot,t_{n+1}; x\bigr):=\sum_{j=1}^{n}\int_{t_{j}}^{t_{j+1}}f_{j}(s)\, ds \cdot F_{j}(x)\ \mbox{ for all }x\in X \mbox{ and } t_{j}\geq 0,\ 1\leq j\leq n.
\end{align*}
Arguing as in \cite[Example 2.13(i)]{marko-manuel-ap}, we may deduce that the mapping $F: [0,\infty)^{n+1} \times X \rightarrow Y$ is Bohr $({\mathcal B},T)$-almost periodic. 
\end{example}

The use of binary relation $\rho$ in Definition \ref{nafaks123456789012345} suggests a very general way of approaching to many known classes of almost periodic functions; before we go any further, we would like to note that this general approach has some obvious unpleasant  
consequences 
because, under the general requirements of Definition \ref{nafaks123456789012345}, any continuous function $F(\cdot;\cdot)$ is always Bohr $({\mathcal B},I',\rho)$-almost periodic provided that $R(F) \times R(F) \subseteq \rho$ (in particular, it is very redundant to assume any kind of boundedness of function $F(\cdot;\cdot)$ in Definition \ref{nafaks123456789012345}; see \cite[Proposition 2.2]{c1} and \cite[Proposition 2.8]{nds-2021} for some particular results obtained in this direction). Therefore, given two sets $\emptyset  \neq I' \subseteq {\mathbb R}^{n},$ $\emptyset  \neq I \subseteq {\mathbb R}^{n}$ satisfying $I +I' \subseteq I$
and a continuous function
$F : I \times X \rightarrow Y$, 
it is natural to introduce the following non-empty sets:
\begin{align}\label{pus}
 {\mathcal A}_{I',I,F}:=\bigl\{ \rho \subseteq Y\times Y \, ;\,  F({\cdot}; {\cdot})\mbox{ is Bohr }({\mathcal B},I',\rho)-\mbox{almost periodic} \bigr\}
\end{align}
and
\begin{align}\label{pus}
 {\mathcal B}_{I',I,F}:=\bigl\{ \rho \subseteq Y\times Y \, ; \, F({\cdot}; {\cdot})\mbox{ is }({\mathcal B},I',\rho)-\mbox{uniformly recurrent} \bigr\}.
\end{align}
Clearly, we have $\emptyset \neq {\mathcal A}_{I',I,F} \subseteq {\mathcal B}_{I',I,F};$ further on, the assumptions $\rho \in {\mathcal A}_{I',I,F}$ ($\rho \in {\mathcal B}_{I',I,F}$) and $\rho \subseteq \rho'$ imply $\rho' \in {\mathcal A}_{I',I,F}$ ($\rho' \in {\mathcal B}_{I',I,F}$). The set  ${\mathcal A}_{I',I,F}$ (${\mathcal B}_{I',I,F}$), equipped with the relation of set inclusion, becomes a partially ordered set; for the sake of brevity, we will not consider the minimal elements and the least elements (if exist) of these partially ordered sets here. The interested reader may try to construct some examples concerning this issue.

Now we will state and prove some parts of the following theorem for Bohr $({\mathcal B},I',\rho)$-almost periodic functions ($({\mathcal B},I',\sigma)$-uniformly recurrent functions):

\begin{thm}\label{lojalni}
Suppose that $\emptyset  \neq I' \subseteq {\mathbb R}^{n},$ $\emptyset  \neq I \subseteq {\mathbb R}^{n},$ $F : I \times X \rightarrow Y$ is Bohr $({\mathcal B},I',\rho)$-almost periodic ($({\mathcal B},I',\rho)$-uniformly recurrent), $\rho$ is a binary relation on $Y$ and $I +I' \subseteq I.$ Then the following holds:
\begin{itemize}
\item[(i)] Set $\sigma:=\{ (\| y_{1}\|_{Y}, \| y_{2}\|_{Y}) \ | \  \exists {\bf t}\in I \ \exists x\in X \ : \  y_{1}=F({\bf t};x)\mbox{ and }y_{2}\in \rho(y_{1}) \}.$ Then the function 
$\|F(\cdot;\cdot)\|_{Y}$ is Bohr $({\mathcal B},I',\sigma)$-almost periodic ($({\mathcal B},I',\sigma)$-uniformly recurrent).
\item[(ii)] Suppose that $\lambda \in {\mathbb C} \setminus \{0\}.$ Set
$\rho_{\lambda}:=\{ \lambda ( y_{1}, y_{2}) \ | \  \exists {\bf t}\in I \ \exists x\in X \ : \  y_{1}=F({\bf t};x)\mbox{ and }y_{2}\in \rho(y_{1}) \}.$
Then the function $\lambda F(\cdot ;\cdot)$ is Bohr $({\mathcal B},I',\rho_{\lambda})$-almost periodic ($({\mathcal B},I',\rho_{\lambda})$-uniformly recurrent).
\item[(iii)] Suppose $a\in {\mathbb C}$ and $x_{0}\in X.$ Define $G :(I-a) \times X \rightarrow Y$ by $G({\bf t};x):=F({\bf t}+a;x+x_{0}),$ ${\bf t}\in I-a,$ $x\in X,$ as well as
${\mathcal B}_{x_{0}}:=\{-x_{0}+B : B\in {\mathcal B}\},$ $I_{a}':=I'$ and
$\rho_{a,x_{0}}:=\{ ( y_{1}, y_{2}) \ | \  \exists {\bf t}\in I-a \ \exists x\in X \ : \  y_{1}=F({\bf t}+a;x+x_{0})\mbox{ and }y_{2}\in \rho(y_{1}) \}.$
Then the function $G(\cdot ;\cdot)$ is Bohr $({\mathcal B}_{x_{0}},I'_{a},\rho_{a,x_{0}})$-almost periodic ($({\mathcal B}_{x_{0}},I'_{a},\rho_{a,x_{0}})$-uniformly recurrent).
\item[(iv)] Suppose that $a,\ b\in {\mathbb C}\setminus \{0\}.$ Define the function $G :(I/a) \times X \rightarrow Y$ by $G({\bf t};x):=F(a{\bf t};bx),$ ${\bf t}\in I/a,$ $x\in X,$ as well as
${\mathcal B}_{b}:=\{b^{-1}B : B\in {\mathcal B}\},$ $I_{a}':=I'/a$ and
$\rho_{a,b}:=\{ ( y_{1}, y_{2}) \ | \  \exists {\bf t}\in I/a \ \exists x\in X \ : \  y_{1}=F(a{\bf t};bx)\mbox{ and }y_{2}\in \rho(y_{1}) \}.$
Then the function $G(\cdot ;\cdot)$ is Bohr $({\mathcal B}_{b},I'_{a},\rho_{a,b})$-almost periodic ($({\mathcal B}_{b},I'_{a},\rho_{a,b})$-uniformly recurrent).
\item[(v)] Assume that for each $B\in {\mathcal B}$ there exists $\epsilon_{B}>0$ such that
the sequence $(F_{k}(\cdot ;\cdot))$ of Bohr $({\mathcal B},I',\rho)$-almost periodic functions ($({\mathcal B},I',\rho)$-uniformly recurrent functions)
converges uniformly to a function $F(\cdot ;\cdot)$ on the set $B^{\circ} \cup \bigcup_{x\in \partial B}B(x,\epsilon_{B}).$ Then the function $F(\cdot;\cdot)$ is 
Bohr $({\mathcal B},I',\rho)$-almost periodic ($({\mathcal B},I',\rho)$-uniformly recurrent), provided that $D(\rho)$ is a closed subset of $Y$ and $\rho$ is continuous in the following sense: 
\begin{itemize}
\item[($C_{\rho}$)]
For each $\epsilon>0$ there exists $\delta>0$ such that, for every $y_{1},\ y_{2}\in Y$ with $\| y_{1}-y_{2}\|_{Y}<\delta$, we have $\| z_{1}-z_{2}\|_{Y}<\epsilon/3$ for every $z_{1}\in \rho(y_{1})$ and $z_{2}\in \rho(y_{2}).$ 
\end{itemize}
\end{itemize}
\end{thm}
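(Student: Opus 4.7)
My overall plan is to dispose of (i)--(iv) by straightforward bookkeeping and concentrate the main work on (v). In each of (i)--(iv) the new binary relation has been defined precisely so as to promote a witness $y_{{\bf t};x}\in\rho(F({\bf t};x))$ into a witness for the transformed function, so the strategy is always: take the $\tau$ produced by the Bohr $({\mathcal B},I',\rho)$-almost periodicity of $F$ with a rescaled tolerance, and verify the desired inequality for the transformed function.

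For (i), apply the definition to $F$ with the same $\epsilon$ and appeal to the reverse triangle inequality
$$
\bigl|\|F({\bf t}+\tau;x)\|_{Y}-\|y_{{\bf t};x}\|_{Y}\bigr|\leq \|F({\bf t}+\tau;x)-y_{{\bf t};x}\|_{Y}\leq \epsilon,
$$
noting that $(\|F({\bf t};x)\|_{Y},\|y_{{\bf t};x}\|_{Y})\in\sigma$ by construction. For (ii) apply the definition for $F$ with $\epsilon/|\lambda|$ and observe that $\lambda y_{{\bf t};x}\in \rho_{\lambda}(\lambda F({\bf t};x))$. For (iii), note that $I+I'\subseteq I$ yields $(I-a)+I'\subseteq I-a$ so $I_{a}'=I'$ is admissible on $I-a$; the same $\tau$ that works for $F,B,\epsilon$ works for $G,B_{x_{0}},\epsilon$ via the identity $G({\bf s}+\tau;x')=F(({\bf s}+a)+\tau;x'+x_{0})$, and the witness $y_{{\bf s}+a;x'+x_{0}}$ sits in $\rho_{a,x_{0}}(G({\bf s};x'))$ by the very definition of $\rho_{a,x_{0}}$. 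Part (iv) is analogous, using the substitution $({\bf t},x)\mapsto (a{\bf t},bx)$ and the dilated almost-period $\tau/a$. The uniformly recurrent clauses in (i)--(iv) follow by replacing the Bohr almost-period $\tau$ with the given recurrent sequence $\tau_{k}$ throughout.

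The crux is (v). Fix $B\in{\mathcal B}$ and $\epsilon>0$; use $(C_{\rho})$ to choose $\delta>0$ with $\|y_{1}-y_{2}\|_{Y}<\delta\Rightarrow \|z_{1}-z_{2}\|_{Y}<\epsilon/3$ for all $z_{i}\in\rho(y_{i})$. Using uniform convergence of $F_{k}$ to $F$ on $I\times U_{B}$, where $U_{B}:=B^{\circ}\cup\bigcup_{x\in\partial B}B(x,\epsilon_{B})\supseteq B$, fix $k$ with $\|F_{k}({\bf s};x)-F({\bf s};x)\|_{Y}<\min(\epsilon/3,\delta)$ for all $({\bf s},x)\in I\times U_{B}$. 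Apply Bohr $({\mathcal B},I',\rho)$-almost periodicity of $F_{k}$ with tolerance $\epsilon/3$ to obtain $l>0$ and, for each ${\bf t}_{0}\in I'$, an element $\tau\in B({\bf t}_{0},l)\cap I'$ together with witnesses $y^{k}_{{\bf t};x}\in\rho(F_{k}({\bf t};x))$ satisfying $\|F_{k}({\bf t}+\tau;x)-y^{k}_{{\bf t};x}\|_{Y}\leq \epsilon/3$. Closedness of $D(\rho)$ combined with $F_{k}({\bf t};x)\in D(\rho)$ and $F_{k}({\bf t};x)\to F({\bf t};x)$ gives $F({\bf t};x)\in D(\rho)$, so any $y_{{\bf t};x}\in\rho(F({\bf t};x))$ may be selected; by $(C_{\rho})$ we have $\|y^{k}_{{\bf t};x}-y_{{\bf t};x}\|_{Y}<\epsilon/3$, whence the three-term bound yields $\|F({\bf t}+\tau;x)-y_{{\bf t};x}\|_{Y}\leq \epsilon$. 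Continuity of $F$ is automatic because $U_{B}$ is open, $F$ is a uniform limit of continuous functions on $I\times U_{B}$, and every point of $X$ belongs to some such $U_{B}$.

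For the uniformly recurrent case of (v) the same three-epsilon mechanism produces the recurrent sequence $(\tau_{j})$ for $F$ by diagonalization: for each $j$, choose $k_{j}$ with $\|F_{k_{j}}-F\|<\min(1/(3j),\delta_{j})$ on $I\times U_{B}$, where $\delta_{j}$ is the $\delta$ provided by $(C_{\rho})$ for $\epsilon=1/j$; then pick an index $m_{j}$ from the recurrent sequence for $F_{k_{j}}$ so large that $|\tau^{k_{j}}_{m_{j}}|\geq j$ and the corresponding supremum is below $1/(3j)$, and set $\tau_{j}:=\tau^{k_{j}}_{m_{j}}$. I expect the only non-routine point of the entire theorem to be exactly the transfer $y^{k}_{{\bf t};x}\in\rho(F_{k}({\bf t};x))\leadsto y_{{\bf t};x}\in\rho(F({\bf t};x))$ in (v): this is precisely where both the closedness of $D(\rho)$ and the continuity condition $(C_{\rho})$ are indispensable, and dropping either should cause (v) to fail in simple counter-examples.
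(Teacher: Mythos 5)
Your proof is correct and follows essentially the same route as the paper's: parts (i)--(iv) are handled by the same direct bookkeeping with the tailor-made relations (which the paper omits as ``almost trivial''), and part (v) uses the identical three-term estimate, combining uniform convergence, closedness of $D(\rho)$ to ensure $F({\bf t};x)\in D(\rho)$, and condition $(C_{\rho})$ to compare the witnesses $y^{k}_{{\bf t};x}\in \rho(F_{k}({\bf t};x))$ and $y_{{\bf t};x}\in \rho(F({\bf t};x))$. Your explicit diagonalization for the uniformly recurrent case of (v) only spells out what the paper leaves implicit.
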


\begin{proof}
The proofs of parts (i)-(iv) are almost trivial and therefore omitted. Now we will prove (v).  Due to the proofs of \cite[Proposition 2.7, Proposition 2.8]{marko-manuel-ap}, it follows that the function $F(\cdot;\cdot)$ is continuous. Let $\epsilon>0$ and $B\in {\mathcal B}$ be fixed. Since 
$D(\rho)$ is a closed subset of $Y$ and
the sequence $(F_{k}(\cdot ;\cdot))$ 
converges uniformly to a function $F(\cdot ;\cdot)$ on the set $B^{\circ} \cup \bigcup_{x\in \partial B}B(x,\epsilon_{B}),$ we have that $F({\bf t};x)\in D(\rho)$ for all ${\bf t}\in I$ and $x\in X.$ Further on, let a number $\delta>0$ be chosen in accordance with the continuity of relation $\rho.$ 
For $\epsilon_{0}\equiv \min(\epsilon/3,\delta),$ we can find a positive integer $k\in {\mathbb N}$ such that $\| F_{k}({\bf t};x)-F({\bf t};x)\|_{Y}<\epsilon_{0}.$ Let ${\bf t}\in I$ and $x\in B$ be fixed. Let
$y^{k}_{{\bf t};x} \in \rho(F_{k}({\bf t};x))$ be the element determined from the definition of Bohr $({\mathcal B},I',\rho)$-almost periodicity of function $F_{k}(\cdot;\cdot),$ with the number $\epsilon$ replaced with the number $\epsilon/3$ therein.
Pick up now an arbitrary element $y_{{\bf t};x}$ from $\rho(F({\bf t};x)).$ 
Then we have (a point $\tau \in I'$ satisfies the requirements in Definition \ref{nafaks123456789012345} for the function $F_{k}(\cdot;\cdot)$):
\begin{align*}
\bigl\| F({\bf t}+\tau; x) -y_{{\bf t};x}\bigr\|_{Y} & \leq 
\bigl \| F({\bf t}+\tau; x) -F_{k}({\bf t}+\tau; x) \bigr\|_{Y}+\bigl \| F_{k}({\bf t}+\tau; x) -  y^{k}_{{\bf t};x}\bigr\|_{Y}\\ & +\bigl\| y_{{\bf t};x}-y^{k}_{{\bf t};x}\bigr\|_{Y}
\leq \epsilon_{0}+(\epsilon/3)+(\epsilon/3)\leq \epsilon. 
\end{align*}
This completes the proof of (v). 
\end{proof}

The proof of following proposition is simple and therefore omitted:

\begin{prop}\label{superstebagT}
Suppose that $\emptyset  \neq I' \subseteq {\mathbb R}^{n},$ $\emptyset  \neq I \subseteq {\mathbb R}^{n},$ $F : I \times X \rightarrow Y$ is a continuous function, $\rho$ is a binary relation on $Y$ and $I +I' \subseteq I.$
If $F : I \times X \rightarrow Y$ is Bohr $({\mathcal B},I',\rho)$-almost periodic/$({\mathcal B},I',\rho)$-uniformly recurrent, and $\phi : Y \rightarrow Z$ is uniformly continuous on the set $R(F) \cup \rho(R(F)),$ then 
$\phi \circ F : I \times X \rightarrow Z$ is Bohr $({\mathcal B},I', \phi \circ \rho)$-almost periodic/$({\mathcal B},I', \phi \circ \rho)$-uniformly recurrent.
\end{prop}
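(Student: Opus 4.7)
The plan is to transfer the Bohr $({\mathcal B},I',\rho)$-almost periodicity of $F$ to $\phi \circ F$ by using the uniform continuity of $\phi$ on $R(F) \cup \rho(R(F))$ as the bridge between the $Y$-estimates guaranteed for $F$ and the $Z$-estimates needed for $\phi \circ F$. Fix $B \in {\mathcal B}$ and $\epsilon > 0$. First, I would use the uniform continuity hypothesis to choose $\delta > 0$ such that $\|\phi(u_{1}) - \phi(u_{2})\|_{Z} \leq \epsilon$ whenever $u_{1},\ u_{2} \in R(F) \cup \rho(R(F))$ satisfy $\|u_{1} - u_{2}\|_{Y} \leq \delta$.

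Next, I would apply Bohr $({\mathcal B},I',\rho)$-almost periodicity of $F$ with $\delta$ in place of $\epsilon$, obtaining $l > 0$ such that for each ${\bf t}_{0} \in I'$ there is ${\bf \tau} \in B({\bf t}_{0},l) \cap I'$ with the property that for every ${\bf t} \in I$ and $x \in B$ one can choose $y_{{\bf t};x} \in \rho(F({\bf t};x))$ with $\|F({\bf t}+{\bf \tau};x) - y_{{\bf t};x}\|_{Y} \leq \delta$. Since $F({\bf t}+{\bf \tau};x) \in R(F)$ and $y_{{\bf t};x} \in \rho(R(F))$, both points lie in the set on which $\phi$ is uniformly continuous, hence $\|\phi(F({\bf t}+{\bf \tau};x)) - \phi(y_{{\bf t};x})\|_{Z} \leq \epsilon$. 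Interpreting $\phi \circ \rho$ as the induced binary relation on $Z$ given by $\{(\phi(y_{1}),\phi(y_{2})) : (y_{1},y_{2}) \in \rho\}$ (consistent with the composition convention fixed in the preliminaries, since $\phi$ may be viewed as the graph relation in $Y \times Z$), one has $\phi(y_{{\bf t};x}) \in \phi(\rho(F({\bf t};x))) = (\phi \circ \rho)((\phi \circ F)({\bf t};x))$, so the choice $z_{{\bf t};x} := \phi(y_{{\bf t};x})$ provides the witness required by Definition \ref{nafaks123456789012345}(i) for $\phi \circ F$.

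The $({\mathcal B},I',\rho)$-uniform recurrence case is entirely analogous: starting from a sequence $({\bf \tau}_{k})$ in $I'$ with $|{\bf \tau}_{k}| \to +\infty$ and the corresponding witnesses $y_{{\bf t};x}$ realising the uniform bound in \eqref{oblak1}, the single modulus of continuity of $\phi$ on $R(F) \cup \rho(R(F))$ lets one push the supremum through $\phi$ to conclude that $\sup_{{\bf t}\in I;x\in B}\|\phi(F({\bf t}+{\bf \tau}_{k};x)) - \phi(y_{{\bf t};x})\|_{Z} \to 0$ as $k \to +\infty$. I do not expect any real obstacle: the hypothesis on $\phi$ was clearly crafted so that each version of the conclusion reduces to a one-line application of uniform continuity. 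The only conceptual point worth emphasising, already settled by the preliminaries, is the observation that the fibres of the composed relation $\phi \circ \rho$ over points of the form $\phi(y)$ are exactly the $\phi$-images of the $\rho$-fibres over $y$, which is precisely what matches the left- and right-hand sides of the new estimate.
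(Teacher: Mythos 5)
Your proof is correct and is exactly the routine uniform-continuity transfer the authors intended; the paper in fact omits the proof of Proposition \ref{superstebagT} as ``simple'', and your argument (choose $\delta$ from the modulus of continuity of $\phi$ on $R(F)\cup\rho(R(F))$, run the definition of $F$ with $\delta$ in place of $\epsilon$, and take $z_{{\bf t};x}:=\phi(y_{{\bf t};x})$) is the one-line proof they had in mind, in both the almost periodic and the uniformly recurrent cases. The only cosmetic caveat is that under the paper's stated composition convention $\phi\circ\rho=\{(y_{1},\phi(y_{2})) : (y_{1},y_{2})\in\rho\}\subseteq Y\times Z$, so that $(\phi\circ\rho)(F({\bf t};x))=\phi(\rho(F({\bf t};x)))$ directly, whereas with your induced relation on $Z$ one only gets the inclusion $\phi(\rho(F({\bf t};x)))\subseteq(\phi\circ\rho)(\phi(F({\bf t};x)))$ when $\phi$ fails to be injective --- but under either reading the witness $\phi(y_{{\bf t};x})$ lies in the required fibre, so nothing breaks.
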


The supremum formula for $({\mathcal B},I',\rho)$-uniformly recurrent functions reads as follows:

\begin{prop}\label{deb}
Suppose that $\emptyset  \neq I' \subseteq {\mathbb R}^{n},$ $\emptyset  \neq I \subseteq {\mathbb R}^{n},$ $I +I' \subseteq I$ and $F : I \times X \rightarrow Y$ is a $({\mathcal B},I',\rho)$-uniformly recurrent function, where $\rho =T\in L(Y)$ is a linear isomorphism. Then for each real number $a>0$ we have:
\begin{align}\label{tupak12345ceT}
\sup_{{\bf t}\in I,x\in B}\bigl\|F({\bf t};x) \bigr\|_{Y}\leq \sup_{{\bf t}\in I+I',|{\bf t}|\geq a ,x\in B}\bigl\| T^{-1}F({\bf t};x) \bigr\|_{Y},
\end{align}
and for each $x\in X$ we have
\begin{align}\label{tupak12345ceT}
\sup_{{\bf t}\in I}\bigl\|F({\bf t};x) \bigr\|_{Y}\leq \sup_{{\bf t}\in I+I',|{\bf t}|\geq a}\bigl\| T^{-1}F({\bf t};x) \bigr\|_{Y},
\end{align}
so that
the function $F(\cdot;x)$ is identically equal to zero provided that the function $F(\cdot;\cdot)$ is $({\mathcal B},I',\rho)$-uniformly recurrent and $\lim_{|{\bf t}|\rightarrow +\infty, {\bf t}\in I+I'}F({\bf t};x)=0.$
\end{prop}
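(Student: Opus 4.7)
The plan is to exploit the fact that, since $\rho=T\in L(Y)$ is single-valued, the inclusion $y_{{\bf t};x}\in \rho(F({\bf t};x))$ in Definition \ref{nafaks123456789012345}(ii) forces $y_{{\bf t};x}=TF({\bf t};x)$. Hence the defining limit of $({\mathcal B},I',T)$-uniform recurrence becomes: for every $B\in {\mathcal B}$ there is a sequence $({\bf \tau}_k)\subseteq I'$ with $|{\bf \tau}_k|\to+\infty$ satisfying $\lim_{k\to\infty}\sup_{{\bf t}\in I,\, x\in B}\|F({\bf t}+{\bf \tau}_k;x)-TF({\bf t};x)\|_Y=0$, and this is the only input I would need from the hypothesis.

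First I would fix $B\in {\mathcal B}$ and $a>0$, extract such a sequence $({\bf \tau}_k)$, and estimate, for arbitrary ${\bf t}\in I$ and $x\in B$,
\begin{align*}
\|F({\bf t};x)\|_Y & \leq \bigl\|F({\bf t};x)-T^{-1}F({\bf t}+{\bf \tau}_k;x)\bigr\|_Y + \bigl\|T^{-1}F({\bf t}+{\bf \tau}_k;x)\bigr\|_Y\\
& \leq \|T^{-1}\|_{L(Y)}\cdot \bigl\|TF({\bf t};x)-F({\bf t}+{\bf \tau}_k;x)\bigr\|_Y + \bigl\|T^{-1}F({\bf t}+{\bf \tau}_k;x)\bigr\|_Y,
\end{align*}
using $T^{-1}T=\mathrm{I}$ in the second step. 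The first summand on the right tends to $0$ uniformly in $({\bf t},x)$ as $k\to+\infty$ by the displayed uniform recurrence.

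For the second summand, I would observe that ${\bf t}+{\bf \tau}_k\in I+I'\subseteq I$ by hypothesis, and that for each fixed ${\bf t}$ one may choose $k$ so large that $|{\bf \tau}_k|\geq |{\bf t}|+a$; the reverse triangle inequality then gives $|{\bf t}+{\bf \tau}_k|\geq a$, so that ${\bf t}+{\bf \tau}_k$ lies in the index set $\{{\bf s}\in I+I': |{\bf s}|\geq a\}$. Passing to the limit $k\to+\infty$ therefore yields the pointwise bound $\|F({\bf t};x)\|_Y\leq \sup_{{\bf s}\in I+I',\,|{\bf s}|\geq a,\,y\in B}\|T^{-1}F({\bf s};y)\|_Y$, and taking the supremum over $({\bf t},x)\in I\times B$ delivers the first claimed inequality. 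The second inequality (for a fixed $x\in X$) is obtained by the same pointwise estimate applied to any $B\in {\mathcal B}$ containing the given $x$; the concluding statement then follows by letting $a\to+\infty$ on the right-hand side, since $\lim_{|{\bf t}|\to+\infty,\,{\bf t}\in I+I'}F({\bf t};x)=0$ forces that supremum to tend to zero.

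I do not anticipate any genuine obstacle: the argument reduces to a single use of the triangle inequality together with the defining convergence. The only mild subtlety is that the choice of $k$ depends on ${\bf t}$, but this is harmless because the upper bound $\|T^{-1}F({\bf t}+{\bf \tau}_k;x)\|_Y\leq \sup_{{\bf s}\in I+I',\,|{\bf s}|\geq a,\,y\in B}\|T^{-1}F({\bf s};y)\|_Y$ is independent of $k$, so one first establishes a pointwise bound and only afterwards takes the outer supremum over $({\bf t},x)$.
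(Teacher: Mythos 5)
Your proposal is correct and follows essentially the same route as the paper: extract the recurrence sequence $({\bf \tau}_{k})$ with $y_{{\bf t};x}=TF({\bf t};x)$, apply $T^{-1}$ and the triangle inequality to get $\|F({\bf t};x)\|_{Y}\leq\|T^{-1}F({\bf t}+{\bf \tau}_{k};x)\|_{Y}+\epsilon\|T^{-1}\|$, and then absorb the error term and take suprema. Your version is in fact slightly more careful than the paper's on one point, namely in justifying that $k$ can be chosen (depending on ${\bf t}$) so that $|{\bf t}+{\bf \tau}_{k}|\geq a$, which the paper leaves implicit.
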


\begin{proof}
Let $a>0,$ $\epsilon>0$ and $B\in {\mathcal B}$ be given. Then  
there exists a sequence $({\bf \tau}_{k})$ in $I'$ such that $\lim_{k\rightarrow +\infty} |{\bf \tau}_{k}|=+\infty$ and that, for every ${\bf t}\in I$ and $x\in B,$ we have that \eqref{oblak1} holds with $y_{{\bf t};x}=TF({\bf t};x).$ This implies the existence of an integer $k\in {\mathbb N}$ such that
\begin{align}\label{oblak11}
\bigl\|F({\bf t}+{\bf \tau}_{k};x)-TF({\bf t};x)\bigr\|_{Y} \leq \epsilon ,\quad { \bf t}\in I,\ x\in B.
\end{align}
The operator $T$ is a linear isomorphism, so that \eqref{oblak11} immediately implies
\begin{align*}
 \bigl\|T^{-1}F({\bf t}+{\bf \tau}_{k};x)-F({\bf t};x)\bigr\|_{Y} \leq \epsilon \cdot \bigl\| T^{-1}\bigr \| ,\quad { \bf t}\in I,\ x\in B
\end{align*}
and
\begin{align*}
\bigl\|F({\bf t};x) \bigr\|_{Y}\leq \bigl\| T^{-1}F({\bf t}+{\bf \tau}_{k};x) \bigr\|_{Y}+\epsilon \cdot \bigl\| T^{-1}\bigr \|, \quad { \bf t}\in I,\ x\in B.
\end{align*}
Since $\epsilon>0$ was arbitrary, this yields
\begin{align*}
\bigl\|F({\bf t};x) \bigr\|_{Y}\leq \bigl\| T^{-1}F({\bf t}+{\bf \tau}_{k};x) \bigr\|_{Y} ,\quad { \bf t}\in I,\ x\in B
\end{align*}
and
\eqref{tupak12345ceT}. The remainder of proof, for a fixed element $x\in X,$ follows from the same arguments and the existence of a set $B\in {\mathcal B}$ such that $x\in B.$
\end{proof}

Regarding the convolution invariance of Bohr $({\mathcal B},I',\rho)$-almost periodic ($({\mathcal B},I',\rho)$-uniformly recurrent) functions, we will clarify the following result:

\begin{thm}\label{milenko}
Suppose that $h\in L^{1}({\mathbb R}^{n})$ and $F : {\mathbb R}^{n} \times X \rightarrow Y$ is a continuous function satisfying that for each $B\in {\mathcal B}$ there exists a finite real number $\epsilon_{B}>0$ such that
$\sup_{{\bf t}\in {\mathbb R}^{n},x\in B^{\cdot}}\|F({\bf t},x)\|_{Y}<+\infty,$
where $B^{\cdot} \equiv B^{\circ} \cup \bigcup_{x\in \partial B}B(x,\epsilon_{B}).$ Suppose, further, that $\rho=A$ is a closed linear operator on $Y$ satisfying that:
\begin{itemize}
\item[(B)] For each ${\bf t}\in {\mathbb R}^{n}$ and $x\in B,$ the function ${\bf s} \mapsto AF({\bf t}
-{\bf s};x),$ ${\bf s}\in {\mathbb R}^{n}$ is Bochner integrable.
\end{itemize}
Then the function
\begin{align}\label{gariprekrsaj}
(h\ast F)({\bf t};x):=\int_{{\mathbb R}^{n}}h(\sigma) F({\bf t}-\sigma;x)\, d\sigma,\quad {\bf t}\in {\mathbb R}^{n},\ x\in X 
\end{align}
is well defined and for each $B\in {\mathcal B}$ we have $\sup_{{\bf t}\in {\mathbb R}^{n},x\in B^{\cdot}}\|(h\ast F)({\bf t};x)\|_{Y}<+\infty;$ furthermore, if $F(\cdot;\cdot)$ is 
Bohr $({\mathcal B},I',A)$-almost periodic ($({\mathcal B},I',A)$-uniformly recurrent), then 
the function $(h\ast F)(\cdot;\cdot)$ is Bohr $({\mathcal B},I',A)$-almost periodic ($({\mathcal B},I',A)$-uniformly recurrent).
\end{thm}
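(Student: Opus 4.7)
The plan is to argue in three stages: first show $h\ast F$ is well defined, uniformly bounded on the relevant set, and continuous; next use Lemma \ref{stana} to push $A$ through the convolution integral; finally transfer the $(\epsilon,A)$-period structure (respectively the recurrence sequence) from $F$ to $h\ast F$ via the standard convolution-translation estimate.

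\emph{Step 1 (well-definedness, bound, continuity).} Fix $B\in {\mathcal B}$ and set $M_{B}:=\sup_{{\bf t}\in {\mathbb R}^{n},x\in B^{\cdot}}\|F({\bf t};x)\|_{Y}<\infty$. For $x\in B^{\cdot}$ the integrand in \eqref{gariprekrsaj} is dominated in $Y$-norm by the scalar $L^{1}$-function $|h(\sigma)|M_{B}$, so the Bochner integral exists and
$$
\|(h\ast F)({\bf t};x)\|_{Y} \leq \|h\|_{L^{1}}\,M_{B},\quad {\bf t}\in {\mathbb R}^{n},\ x\in B^{\cdot},
$$
which is the asserted uniform bound. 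Continuity of $h\ast F$ on ${\mathbb R}^{n}\times X$ then follows by a routine dominated-convergence argument exploiting the continuity of $F$.

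\emph{Step 2 (pulling $A$ through the integral).} Since $\rho=A$ is single-valued, the only candidate for $y_{{\bf t};x}$ in Definition \ref{nafaks123456789012345} (with $F$) is $AF({\bf t};x)$; in particular the almost-periodicity/recurrence of $F$ forces $F({\bf t};x)\in D(A)$. Condition (B) supplies Bochner integrability of $\sigma\mapsto AF({\bf t}-\sigma;x)$, while Step 1 gives Bochner integrability of $\sigma\mapsto h(\sigma) F({\bf t}-\sigma;x)$; together they yield Bochner integrability of the pair
$$
f(\sigma):=h(\sigma)F({\bf t}-\sigma;x),\quad g(\sigma):=h(\sigma)AF({\bf t}-\sigma;x),
$$
with $g(\sigma)=Af(\sigma)$ by linearity of $A$ in the scalar $h(\sigma)$. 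Invoking Lemma \ref{stana} with $\Omega={\mathbb R}^{n}$, Lebesgue measure $\mu$, and the closed operator $A$ then gives $(h\ast F)({\bf t};x)\in D(A)$ and the key identity
$$
A(h\ast F)({\bf t};x)=\int_{{\mathbb R}^{n}}h(\sigma)\,AF({\bf t}-\sigma;x)\,d\sigma,\quad {\bf t}\in {\mathbb R}^{n},\ x\in B.
$$

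\emph{Step 3 (transfer of $({\mathcal B},I',A)$-structure).} For the Bohr case, fix $\epsilon>0$ and $B\in {\mathcal B}$ and apply the definition to $F$ with $\epsilon':=\epsilon/(1+\|h\|_{L^{1}})$ to obtain $l>0$ such that every ${\bf t}_{0}\in I'$ admits ${\bf\tau}\in B({\bf t}_{0},l)\cap I'$ with $\|F({\bf s}+{\bf\tau};x)-AF({\bf s};x)\|_{Y}\leq \epsilon'$ for all ${\bf s}\in {\mathbb R}^{n},\ x\in B$. Using Step 2 and translation invariance of Lebesgue measure,
\begin{align*}
\bigl\|(h\ast F)({\bf t}+{\bf\tau};x)-A(h\ast F)({\bf t};x)\bigr\|_{Y}
&=\Bigl\|\int_{{\mathbb R}^{n}}h(\sigma)\bigl[F({\bf t}+{\bf\tau}-\sigma;x)-AF({\bf t}-\sigma;x)\bigr]d\sigma\Bigr\|_{Y}\\
&\leq \|h\|_{L^{1}}\,\epsilon'<\epsilon,
\end{align*}
which verifies Definition \ref{nafaks123456789012345} for $h\ast F$ with $y_{{\bf t};x}:=A(h\ast F)({\bf t};x)$. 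The $({\mathcal B},I',A)$-uniformly recurrent case is obtained by the same inequality with $l$ replaced by a sequence $({\bf\tau}_{k})\subseteq I'$ with $|{\bf\tau}_{k}|\to +\infty$, since the factor $\sup_{{\bf s},x}\|F({\bf s}+{\bf\tau}_{k};x)-AF({\bf s};x)\|_{Y}$ can be pulled outside the integral and tends to $0$ by hypothesis.

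\emph{Main obstacle.} The delicate point is Step 2: when $A$ is genuinely unbounded, commuting $A$ with a Bochner integral requires Bochner integrability of the $A$-image of the integrand. Condition (B) is tailored precisely to supply this for $\sigma\mapsto AF({\bf t}-\sigma;x)$, and the boundedness of $F$ on $B^{\cdot}$ together with $h\in L^{1}$ handles the companion integrand; once this closure step is in place, everything else is the classical convolution-translation estimate, which transfers both the Bohr and the uniformly recurrent versions uniformly in $({\bf t},x)$.
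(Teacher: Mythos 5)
Your proposal is correct and follows essentially the same route as the paper's proof: establish boundedness and continuity of $h\ast F$ via domination by $|h|\cdot\sup\|F\|$ and dominated convergence, use Lemma \ref{stana} together with condition (B) to obtain $A\bigl((h\ast F)({\bf t};x)\bigr)=\int_{{\mathbb R}^{n}}h(\sigma)AF({\bf t}-\sigma;x)\,d\sigma$, and then transfer the $(\epsilon,A)$-periods by the standard convolution--translation estimate. The only cosmetic differences are your normalization $\epsilon'=\epsilon/(1+\|h\|_{L^{1}})$ (the paper is content with the bound $\epsilon\|h\|_{L^{1}}$) and your explicit treatment of the uniformly recurrent case, which the paper omits as analogous.
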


\begin{proof}
We will consider only Bohr $({\mathcal B},I',A)$-almost periodic functions. The function $(h\ast F)(\cdot;\cdot)$ is well defined and $\sup_{{\bf t}\in {\mathbb R}^{n},x\in B^{\cdot}}\|(h\ast F)({\bf t};x)\|_{Y}<+\infty$ for all $B\in {\mathcal B}$. The continuity of this function at the fixed point $({\bf t}_{0};x_{0}) \in {\mathbb R}^{n} \times X$ follows from the existence of a set $B\in {\mathcal B}$ such that $x_{0}\in B,$ the assumption $\sup_{{\bf t}\in {\mathbb R}^{n},x\in B^{\cdot}}\|F({\bf t};x)\|_{Y}<+\infty$  and the dominated convergence theorem. Let $\epsilon>0$ and $B\in {\mathcal B}$ be given. Then there exists 
$l>0$ such that for each ${\bf t}_{0} \in I'$ there exists ${\bf \tau} \in B({\bf t}_{0},l) \cap I'$ such that, for every ${\bf t}\in {\mathbb R}^{n}$ and $x\in B,$ there exists an element $y_{{\bf t};x}=A(F({\bf t};x))$ such that
\eqref{oblak}
holds. Due to Lemma \ref{stana} and condition (C),
for every ${\bf t}\in {\mathbb R}^{n}$ and $x\in B,$ we have that 
$z_{{\bf t},x}:=A((h\ast F)({\bf t};x))=\int_{ {\mathbb R}^{n}}h({\bf s})A(F({\bf t}-{\bf s};x))\, d{\bf s}.$
Therefore, we have
\begin{align*}
\Bigl\| & (h\ast F)({\bf t}+\tau ; x)-z_{{\bf t},x}\Bigr\|_{Y}
\\& \leq \int_{{\mathbb R}^{n}}|h(\sigma)| \cdot \bigl\| F({\bf t}+\tau -{\bf s}; x)-A(F({\bf t}-{\bf s};x))\bigr\|_{Y} \, d{\bf s}
\\& \leq \epsilon \cdot \| h\|_{L^{1}({\mathbb R}^{n})},\quad {\bf t}\in {\mathbb R}^{n},\ x\in B,
\end{align*}
which completes the proof.
\end{proof}

\begin{rem}\label{opaska}
The requirements of Theorem \ref{milenko} are satisfied if $A\in L(X).$
\end{rem}

For the sake of completeness, we will provide all relevant details of the following result, which has numerous important applications in the analysis of the existence and uniqueness of time $A$-almost periodic solutions for various classes of abstract (degenerate) Volterra integro-differential equations; the statement can be extended for the corresponding Stepanov classes which will be considered somewhere else:

\begin{thm}\label{mkmk}
Let $\emptyset \neq I'\subseteq {\mathbb R}^{n},$ let $A$ be a closed linear operator on $X,$ and let $(R({\bf t}))_{{\bf t}> {\bf 0}}\subseteq L(X)$ be a strongly continuous operator family such that $R({\bf t})A\subseteq AR({\bf t})$ for all ${\bf t}\in {\mathbb R}^{n}$  and
$\int_{(0,\infty)^{n}}\|R({\bf t} )\|\, d{\bf t}<\infty .$ 
If $f : {\mathbb R}^{n} \rightarrow X$ is a bounded $(I',A)$-almost periodic function, resp.  bounded $(I',A)$-uniformly recurrent function, and the function $Af : {\mathbb R}^{n} \rightarrow X$ is well defined and bounded, then the function $F: {\mathbb R}^{n} \rightarrow X,$ given by
\eqref{pikford},
is well-defined, bounded and $(I',A)$-almost periodic, resp. well-defined, bounded and $(I',A)$-uniformly recurrent.
\end{thm}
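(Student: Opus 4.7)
The plan is to begin by transforming \eqref{pikford} via the substitution ${\bf s}\mapsto {\bf t}-{\bf s}$, which rewrites the infinite convolution in the cleaner form
\begin{align*}
F({\bf t})=\int_{(0,\infty)^{n}}R({\bf s})f({\bf t}-{\bf s})\, d{\bf s},\quad {\bf t}\in {\mathbb R}^{n}.
\end{align*}
Since $f$ is bounded and $R(\cdot)$ is norm-integrable on $(0,\infty)^{n}$, this integral converges absolutely in the Bochner sense, $F(\cdot)$ is bounded by $\|f\|_{\infty}\int_{(0,\infty)^{n}}\|R({\bf s})\|\, d{\bf s}$, and continuity of $F(\cdot)$ follows from the dominated convergence theorem together with the strong continuity of $(R({\bf t}))_{{\bf t}>{\bf 0}}$.

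The crucial step is to move $A$ inside the integral. The intertwining $R({\bf s})A\subseteq AR({\bf s})$ combined with the standing assumption that $f({\bf t})\in D(A)$ for every ${\bf t}$ (built into ``$Af$ is well defined'') gives $R({\bf s})f({\bf t}-{\bf s})\in D(A)$ and $A[R({\bf s})f({\bf t}-{\bf s})]=R({\bf s})Af({\bf t}-{\bf s})$. Boundedness of $Af$ together with the norm-integrability of $R(\cdot)$ makes ${\bf s}\mapsto R({\bf s})Af({\bf t}-{\bf s})$ Bochner integrable, so Lemma \ref{stana} applied to the closed operator $A$ yields $F({\bf t})\in D(A)$ and
\begin{align*}
AF({\bf t})=\int_{(0,\infty)^{n}}R({\bf s})Af({\bf t}-{\bf s})\, d{\bf s},\quad {\bf t}\in {\mathbb R}^{n}.
\end{align*}

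From here the $(I',A)$-almost periodicity falls out by a one-line estimate. Given $\epsilon>0$ and ${\bf t}_{0}\in I'$, extract from the Bohr $(I',A)$-almost periodicity of $f$ a length $l>0$ and a period $\tau\in B({\bf t}_{0},l)\cap I'$ with $\|f({\bf u}+\tau)-Af({\bf u})\|\leq \epsilon$ for all ${\bf u}\in {\mathbb R}^{n}$. Subtracting the two displayed formulas and estimating under the integral,
\begin{align*}
\bigl\|F({\bf t}+\tau)-AF({\bf t})\bigr\|\leq \int_{(0,\infty)^{n}}\|R({\bf s})\|\cdot \bigl\|f({\bf t}+\tau-{\bf s})-Af({\bf t}-{\bf s})\bigr\|\, d{\bf s}\leq \epsilon\, C,
\end{align*}
where $C=\int_{(0,\infty)^{n}}\|R({\bf s})\|\, d{\bf s}$. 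The uniformly recurrent case is handled identically, replacing the single $\tau$ by a sequence $(\tau_{k})\subseteq I'$ with $|\tau_{k}|\to\infty$ and the fixed $\epsilon$-bound by the uniform convergence $\sup_{{\bf u}\in{\mathbb R}^{n}}\|f({\bf u}+\tau_{k})-Af({\bf u})\|\to 0$, which forces the right-hand side to vanish as $k\to\infty$.

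The main technical obstacle is the commutation identity for $AF({\bf t})$: it is precisely at this step that one needs the intertwining $R({\bf s})A\subseteq AR({\bf s})$, the closedness of $A$ via Lemma \ref{stana}, and the boundedness hypothesis on $Af$ (the last one providing the dominated integrand required to invoke that lemma). Everything else reduces to the absolutely convergent convolution estimate above.
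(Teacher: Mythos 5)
Your proposal is correct and follows essentially the same route as the paper's own proof: rewrite the infinite convolution as $F({\bf t})=\int_{(0,\infty)^{n}}R({\bf s})f({\bf t}-{\bf s})\, d{\bf s}$, use the intertwining relation together with Lemma \ref{stana} and the boundedness of $Af$ to obtain $AF({\bf t})=\int_{(0,\infty)^{n}}R({\bf s})Af({\bf t}-{\bf s})\, d{\bf s}$, and then conclude by the same integral estimate of $\|F({\bf t}+\tau)-AF({\bf t})\|$. The only (harmless) addition is your explicit remark on the continuity of $F(\cdot)$ via dominated convergence, which the paper leaves implicit.
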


\begin{proof}
We will consider only $(I',A)$-almost periodic functions.
It is clear that the function $F(\cdot)$ is well-defined and bounded since
$$
F({\bf t}):=\int_{(0,\infty)^{n}}R({\bf s})f({\bf t}-{\bf s})\, d{\bf s},\quad {\bf t}\in {\mathbb R}^{n},
$$
as well as $\int_{(0,\infty)^{n}}\|R({\bf t} )\|\, d{\bf t}<\infty $ and the function $f(\cdot)$ is bounded. Similarly, Lemma \ref{stana}, our assumptions $R({\bf t})A\subseteq AR({\bf t})$ for all ${\bf t}\in {\mathbb R}^{n}$ and the boundedness of function $Af(\cdot)$ together imply that $AF({\bf t})=\int_{(0,\infty)^{n}}R({\bf s})Af({\bf t}-{\bf s})\, d{\bf s}$ for all $ {\bf t}\in {\mathbb R}^{n}.$ 
Furthermore, for every $\tau \in I'$ and ${\bf t}\in {\mathbb R}^{n},$ we have
\begin{align*}
\| F({\bf t}+\tau)-AF({\bf t})\| &=\Biggl\| \int_{(0,\infty)^{n}}R({\bf s})\bigl[f({\bf t}+\tau-{\bf s})-Af({\bf t}-{\bf s})\bigr]\, d{\bf s}\Biggr\|
\\ & \leq \int_{(0,\infty)^{n}}\| R({\bf s}) \| \cdot \| f({\bf t}+\tau-{\bf s})-Af({\bf t}-{\bf s}) \|\, d{\bf s}.
\end{align*}
Keeping in mind the corresponding definition of $(I',A)$-almost periodicity, the above calculation simply completes the proof of theorem.
\end{proof}

Suppose that $F : I \times X \rightarrow Y$ and $G : I \times Y \rightarrow Z$ are given continuous functions. Then we define the multi-dimensional Nemytskii operator
$W : I  \times X \rightarrow Z$ by
\begin{align}\label{skadar}
W({\bf t}; x):=G\bigl({\bf t} ; F({\bf t}; x)\bigr),\quad {\bf t} \in I,\ x\in X.
\end{align}

The following composition principle slightly generalizes the statement of \cite[Theorem 2.19]{nova-mse}; keeping in mind the corresponding definitions, the proof is almost the same as the proof of this theorem and therefore omitted:

\begin{thm}\label{episkop-jovan}
Suppose that the functions $F : I \times X \rightarrow Y$ and $G : I \times Y \rightarrow Z$ are continuous,  $\emptyset \neq I' \subseteq {\mathbb R}^{n},$ $\emptyset '\subseteq I \subseteq {\mathbb R}^{n}$ and $\sigma$ is a binary relation on $Z.$
\begin{itemize}
\item[(i)]
Suppose further that, for every $B\in {\mathcal B}$ and $\epsilon>0,$
there exists $l>0$ such that for each ${\bf t}_{0} \in I'$ there exists ${\bf \tau} \in B({\bf t}_{0},l) \cap I'$ such that, 
for every ${\bf t}\in I$ and $x\in B,$ there exist elements $y_{{\bf t};x}\in \rho (F({\bf t};x))$ and $z_{{\bf t};x}\in \sigma (W({\bf t};x))$ such that
\eqref{oblak} holds
as well as that
there exists a finite real constant $L>0$ such that
\begin{align}\label{lajbaha}
\bigl\|G({\bf t}+\tau;F({\bf t}+\tau ; x))-G\bigl({\bf t}+\tau;y_{{\bf t},x}\bigr)\bigr\|_{Z} \leq L\bigl\|F({\bf t}+{\bf \tau};x)-y_{{\bf t};x}\bigr\|_{Y}
\end{align}
and 
\begin{align}\label{ujshe1}
\bigl\|G({\bf t}+{\bf \tau};y_{{\bf t},x})-z_{{\bf t};x}\bigr\|_{Z} \leq \epsilon .
\end{align}
Then the function $W(\cdot;\cdot),$ given by \eqref{skadar}, is Bohr $({\mathcal B},I',\sigma)$-almost periodic.
\item[(ii)] 
Suppose that, for every $B\in {\mathcal B},$ 
there exists a sequence $({\bf \tau}_{k})$ in $I'$ such that\\ $\lim_{k\rightarrow +\infty} |{\bf \tau}_{k}|=+\infty$ and that, for every ${\bf t}\in I$ and $x\in B,$ there exist elements $y_{{\bf t};x}\in \rho (F({\bf t};x))$ and $z_{{\bf t};x}\in \sigma (W({\bf t};x))$ such that
\eqref{oblak1} holds as well as that for each $k\in {\mathbb N}$ the equations
\eqref{lajbaha}-\eqref{ujshe1} hold with the number $\tau$ replaced with the number $\tau_{k}$ therein. 
Then the function $W(\cdot;\cdot),$ given by \eqref{skadar}, is $({\mathcal B},I',\sigma)$-uniformly recurrent.
\end{itemize}
\end{thm}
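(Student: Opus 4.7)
The plan is to reduce the statement to a clean triangle inequality, since the hypotheses are engineered precisely so that $W(\mathbf{t}+\tau;x)-z_{\mathbf{t};x}$ splits into two pieces, each of which is already controlled by one of \eqref{lajbaha}--\eqref{ujshe1} combined with \eqref{oblak}. In particular no new analytic estimate is needed; the work is purely bookkeeping to propagate a single translate $\tau$ through both almost periodicity data (for $F$ and for $W$) and through the Lipschitz-type bound for $G$.

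For part (i), I would fix $B\in \mathcal{B}$ and $\epsilon>0$, and apply the hypothesis with the number $\epsilon$ replaced by $\epsilon/(L+1)$. This furnishes some $l>0$ such that, for each $\mathbf{t}_0\in I'$, there is $\tau\in B(\mathbf{t}_0,l)\cap I'$ and, for every $\mathbf{t}\in I,\ x\in B$, there exist $y_{\mathbf{t};x}\in \rho(F(\mathbf{t};x))$ and $z_{\mathbf{t};x}\in \sigma(W(\mathbf{t};x))$ simultaneously satisfying the scaled versions of \eqref{oblak}, \eqref{lajbaha} and \eqref{ujshe1}. The crucial point is that the single $\tau$ supplied by the hypothesis can be reused for all three estimates; this is built into the joint quantifier structure of the assumption, so no independent selection argument is required. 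Then, by the triangle inequality and the definition \eqref{skadar} of $W$,
\begin{align*}
\bigl\|W(\mathbf{t}+\tau;x)-z_{\mathbf{t};x}\bigr\|_{Z}
&\leq \bigl\|G(\mathbf{t}+\tau;F(\mathbf{t}+\tau;x))-G(\mathbf{t}+\tau;y_{\mathbf{t};x})\bigr\|_{Z} \\
&\quad +\bigl\|G(\mathbf{t}+\tau;y_{\mathbf{t};x})-z_{\mathbf{t};x}\bigr\|_{Z} \\
&\leq L\bigl\|F(\mathbf{t}+\tau;x)-y_{\mathbf{t};x}\bigr\|_{Y}+\frac{\epsilon}{L+1}\\
&\leq \frac{L\epsilon}{L+1}+\frac{\epsilon}{L+1}=\epsilon,
\end{align*}
which matches Definition \ref{nafaks123456789012345}(i) for $W(\cdot;\cdot)$ relative to the relation $\sigma$. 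Continuity of $W$ is immediate from the continuity of $F$ and $G$, so this finishes part (i).

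Part (ii) follows by exactly the same chain of inequalities, only with the index set of $\epsilon$-periods replaced by the sequence $(\tau_k)\subseteq I'$ with $|\tau_k|\to+\infty$. Writing the triangle inequality bound for each $k$ and then taking the supremum over $\mathbf{t}\in I$, $x\in B$, the first summand is handled by \eqref{lajbaha} and the uniform convergence in \eqref{oblak1}, while the second summand is handled by \eqref{ujshe1}; letting $k\to\infty$ yields the required vanishing $\sup$ in Definition \ref{nafaks123456789012345}(ii).

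The only place one needs to be attentive is making sure that the Lipschitz constant $L$ in \eqref{lajbaha} is allowed to be independent of $\epsilon$, $\mathbf{t}$ and $x$ (which is how the hypothesis is phrased); this is what legitimizes the rescaling $\epsilon\mapsto \epsilon/(L+1)$. There is no genuinely hard step: everything reduces to the triangle inequality once one has noted that the hypothesis has already been written so as to supply a common translate $\tau$ for all three controlling inequalities simultaneously.
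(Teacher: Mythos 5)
Your proof is correct and is exactly the triangle-inequality argument the authors intend (the paper itself omits the proof, referring to the analogous result in \cite{nova-mse}): writing $W({\bf t}+\tau;x)-z_{{\bf t};x}$ as the sum of the two differences controlled by \eqref{lajbaha} together with \eqref{oblak}, and by \eqref{ujshe1}, after rescaling $\epsilon$ to $\epsilon/(L+1)$. You also rightly flag the one delicate point, namely that $L$ must be read as independent of $\epsilon$, ${\bf t}$ and $x$ for the rescaling to be legitimate, which is the intended (if loosely quantified) reading of the hypothesis.
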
 

\subsection{$T$-Almost periodic type functions in finite-dimensional spaces}\label{nova}

In this subsection, we will clarify some basic results concerning $T$-almost periodic type functions of form $F: I \rightarrow {\mathbb C}^{k}$, where $k\in {\mathbb N}$ and $I=[0,\infty)$ or $I={\mathbb R}$. We will also provide several illustrative examples in this direction.

First of all,
note that the argumentation contained in the proof of \cite[Proposition 2.6]{c1} enables one to deduce the following result (let us only point out that, if $T$ is a linear isomorphism, then the estimate $\|f(t)\|\leq \| T^{-1}\| \cdot \| Tf(t)\|,$ $t\in I$ can be used):

\begin{prop}\label{tomie}
Suppose that 
$\rho=T\in L(Y)$,
$I=[0,\infty)$ or $I={\mathbb R},$ and $I'=[0,\infty).$ If the function $F : I \rightarrow Y$ is $(I',T)$-uniformly recurrent and $F\neq 0,$ then $\|  T\| _{L(Y)}\geq 1;$ furthermore, if $T$ is a linear isomorphism, then $\| T^{-1}\|_{L(Y)}\geq 1.$ 
\end{prop}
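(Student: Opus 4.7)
The strategy, following the scalar prototype \cite[Proposition 2.6]{c1}, is contraposition: I will assume $F\neq 0$ together with $\|T\|_{L(Y)}<1$ (for the first assertion), respectively $T$ an isomorphism with $\|T^{-1}\|_{L(Y)}<1$ (for the second), and derive $F\equiv 0$. Unwinding Definition~\ref{nafaks123456789012345}(ii) with $X=\{0\}$ and single-valued $\rho=T$ (so $y_{t}=TF(t)$) yields a sequence $(\tau_{k})\subseteq[0,\infty)$ with $\tau_{k}\to+\infty$ and
$$
\eta_{k}:=\sup_{t\in I}\bigl\|F(t+\tau_{k})-TF(t)\bigr\|_{Y}\longrightarrow 0.
$$

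For the first assertion, telescoping this inequality along $t,\,t+\tau_{k},\ldots,\,t+(m-1)\tau_{k}$ via the identity displayed in Example~\ref{pripaz} and using $\|T\|_{L(Y)}<1$ produces the geometric bound
$$
\bigl\|F(t+m\tau_{k})-T^{m}F(t)\bigr\|_{Y}\le \eta_{k}\sum_{j=0}^{m-1}\|T\|_{L(Y)}^{j}\le \frac{\eta_{k}}{1-\|T\|_{L(Y)}}.
$$
Since $I+\tau_{k}\subseteq I$, every $s\in I$ with $s\ge\tau_{k}$ decomposes as $s=s_{0}+m\tau_{k}$ with $s_{0}\in[0,\tau_{k})$ and $m\to\infty$ as $s\to+\infty$; setting $M_{k}:=\sup_{s_{0}\in[0,\tau_{k}]}\|F(s_{0})\|_{Y}<\infty$ (finite by continuity) yields $\|F(s)\|_{Y}\le \|T\|_{L(Y)}^{m}M_{k}+\eta_{k}/(1-\|T\|_{L(Y)})$, so letting first $m\to\infty$ and then $k\to\infty$ gives $\|F(s)\|_{Y}\to 0$ as $s\to+\infty$ in $I$. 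Reinserting this into the recurrence at fixed $t$ (for which $t+\tau_{k}\to+\infty$) forces $TF(t)=0$ for every $t\in I$, and then injectivity of $T$ on $R(F)$ --- the direct operator counterpart of $c\neq 0$ in the scalar prototype --- delivers $F\equiv 0$, the desired contradiction.

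For the second assertion, the estimate highlighted in the paragraph preceding the statement, $\|F(t)\|_{Y}\le\|T^{-1}\|_{L(Y)}\|TF(t)\|_{Y}$, converts the recurrence into
$$
\|F(t)\|_{Y}\le\|T^{-1}\|_{L(Y)}\bigl(\|F(t+\tau_{k})\|_{Y}+\eta_{k}\bigr),\qquad t\in I.
$$
The $T^{-1}$-flavored analogue of the Part~1 telescoping first secures $\|F\|_{\infty}<\infty$; then $I+\tau_{k}\subseteq I$ gives $\sup_{t\in I}\|F(t+\tau_{k})\|_{Y}\le\|F\|_{\infty}$, and taking the supremum of both sides yields $\|F\|_{\infty}\le\|T^{-1}\|_{L(Y)}(\|F\|_{\infty}+\eta_{k})$. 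Sending $k\to\infty$ produces $(1-\|T^{-1}\|_{L(Y)})\|F\|_{\infty}\le 0$, hence $F\equiv 0$ under $\|T^{-1}\|_{L(Y)}<1$; alternatively, this assertion can be read off directly from the supremum formula of Proposition~\ref{deb} once boundedness of $F$ is in hand.

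The principal obstacle is establishing global boundedness of $F$ on $I$ from the recurrence alone, which is precisely what licenses the supremum manipulations in both parts; the geometric telescoping is the device converting continuity-based local boundedness on the fundamental window $[0,\tau_{k}]$ into a global bound while simultaneously delivering decay at infinity. The final step $TF\equiv 0\Rightarrow F\equiv 0$ in Part~1 is where injectivity of $T$ is tacitly needed --- free in the scalar model via $c\neq 0$.
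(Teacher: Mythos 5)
Your reconstruction follows the same route the paper intends (the paper itself offers no written proof beyond a pointer to the argument of \cite[Proposition 2.6]{c1} together with the hint $\|f(t)\|\leq \|T^{-1}\|\cdot\|Tf(t)\|$), and the telescoping estimate and the resulting decay of $F$ at $+\infty$ in Part 1 are correct. But there are two genuine gaps. First, in Part 1 you arrive at $TF\equiv 0$ and then invoke ``injectivity of $T$ on $R(F)$'' to conclude $F\equiv 0$; injectivity is not among the hypotheses ($T$ is an arbitrary element of $L(Y)$ in the first assertion), and you flag this but do not resolve it. For $I={\mathbb R}$ you could have closed the gap without injectivity: once $TF\equiv 0$, the defining condition reads $\sup_{t\in{\mathbb R}}\|F(t+\tau_{k})\|_{Y}\rightarrow 0$, and this supremum equals $\sup_{s\in{\mathbb R}}\|F(s)\|_{Y}$ for every $k$, so $F\equiv 0$ outright. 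For $I=[0,\infty)$ no such fix exists: if $N(T)\neq\{0\}$ and $F$ is any nonzero continuous $N(T)$-valued function with $F(s)\rightarrow 0$ as $s\rightarrow+\infty$ (e.g.\ $F(t)=e^{-t}y_{0}$ with $0\neq y_{0}\in N(T)$), then $F$ is $([0,\infty),T)$-uniformly recurrent regardless of $\|T\|_{L(Y)}$. So on $[0,\infty)$ the first assertion really needs $T$ injective, and your proof cannot be completed as written because the step $TF\equiv 0\Rightarrow F\equiv 0$ is exactly where the unstated hypothesis enters.

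Second, in Part 2 the sentence ``the $T^{-1}$-flavored analogue of the Part 1 telescoping first secures $\|F\|_{\infty}<\infty$'' is not justified. That telescoping yields $\|F(t)\|_{Y}\leq \|T^{-1}\|^{m}\|F(t+m\tau_{k})\|_{Y}+\eta_{k}\|T^{-1}\|/(1-\|T^{-1}\|)$, which is useful only if you already control $\|F(t+m\tau_{k})\|_{Y}$; a $T$-uniformly recurrent function need not be bounded, and in fact the lower estimate $\|F(t+\tau_{k})\|_{Y}\geq \|T^{-1}\|^{-1}\|F(t)\|_{Y}-\eta_{k}$ forces geometric growth along $t_{0}+m\tau_{k}$ whenever $F(t_{0})\neq 0$ and $\|T^{-1}\|<1$, so unboundedness is precisely the scenario you must exclude rather than assume away. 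Consequently the concluding manipulation $\|F\|_{\infty}\leq\|T^{-1}\|(\|F\|_{\infty}+\eta_{k})$ is vacuous when $\|F\|_{\infty}=+\infty$, and the alternative appeal to Proposition \ref{deb} suffers from the same defect (it only compares suprema, which may both be infinite). To finish Part 2 you must either add a boundedness hypothesis on $F$ or supply a separate argument ruling out the exponentially growing alternative; as written, this step is a gap.
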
 

For the linear continuous operators which are not scalar multiples of the identity operator, case in which  $\|  T\| _{L(Y)}> 1$ is possible:

\begin{example}\label{matrice}
Suppose that $Y:={\mathbb C}^{2}$ is equipped with the norm $\| (z_{1},z_{2})\|_{Y}:=\sqrt{|z_{1}|^{2}+|z_{2}|^{2}}$ ($z_{1},\ z_{2}\in {\mathbb C}$), $a\in {\mathbb C}$ satisfies $|a|>1,$ the function $u : [0,\infty) \rightarrow {\mathbb C}$ is almost periodic, $F(t):=(u(t),u(t)),$ $t\geq 0,$ and
\begin{align}\label{ispost}
T:=\Biggl[\begin{matrix}
a\  & \ 1-a\\ a &\  1-a \end{matrix}\Biggr].
\end{align}
Then the function $F(\cdot)$ is $T$-almost periodic but $\| T\|_{L(Y)}>1.$
Furthermore, the assumption that $T$ is a linear isomorphism does not imply $\| T^{-1}\|_{L(Y)}= 1;$ consider the same pivot space $Y$, the same function $F(\cdot)$ and the matrix
 $$
T:=\Biggl[\begin{matrix}
2\  & \ -1\\ 1 &\  0 \end{matrix}\Biggr].$$ Then we have both
$\| T\|_{L(Y)}>1$ and $\| T^{-1}\|_{L(Y)}>1.$
\end{example}

It is clear that Proposition \ref{tomie} provides a large class of complex matrices $T=A$ of format $k\times k$ ($k\in {\mathbb N}$) such that the only $(I',A)$-uniformly recurrent function $F : [0,\infty) \rightarrow {\mathbb C}^{k}$ is the zero function, actually. Now we will state and prove the following result:

\begin{prop}\label{tomqa}
Suppose that $k\in {\mathbb N},$ $T=A=[a_{ij}]$ is a non-zero complex matrix of format $k\times k,$ $I={\mathbb R}$ or $I=[0,\infty),$ $I'\subseteq {\mathbb R},$ $I+I'\subseteq I,$ and the function $F: I \rightarrow {\mathbb C}^{k}$ is Bohr $(I',A)$-almost periodic (bounded $(I',A)$-uniformly recurrent). If $F=(F_{1},\cdot \cdot \cdot, F_{k})$, then there exists a non-trivial linear combination of functions $F_{1},\cdot \cdot \cdot,F_{k}$ which is Bohr $(I',{\rm I})$-almost periodic (bounded $(I',{\rm I})$-uniformly recurrent).
\end{prop}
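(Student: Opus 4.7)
The plan is to reduce the problem to a scalar-valued question via the spectral theory of $A$. Because $A$ is a non-zero complex $k\times k$ matrix, it has at least one eigenvalue $\lambda\in\mathbb{C}$, together with a non-zero row vector $c=(c_{1},\ldots,c_{k})\in\mathbb{C}^{k}$ satisfying $cA=\lambda c$. I would introduce the scalar combination
\[
G(t):=cF(t)=\sum_{j=1}^{k}c_{j}F_{j}(t),\qquad t\in I,
\]
which by construction is a non-trivial (the coefficient vector is non-zero) linear combination of $F_{1},\ldots,F_{k}$.

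The central identity is then
\[
G(t+\tau)-\lambda G(t)=c\bigl[F(t+\tau)-AF(t)\bigr],\qquad t\in I,\ \tau\in I',
\]
which gives the pointwise bound $|G(t+\tau)-\lambda G(t)|\leq\|c\|\cdot\|F(t+\tau)-AF(t)\|_{\mathbb{C}^{k}}$. Inserting this into Definition \ref{nafaks123456789012345} shows that the Bohr $(I',A)$-almost periodicity of $F$ (respectively, its bounded $(I',A)$-uniform recurrence) transfers to Bohr $(I',\lambda)$-almost periodicity (respectively, bounded $(I',\lambda)$-uniform recurrence) of the scalar function $G$. If one can arrange $\lambda=1$ --- that is, if $1$ is an eigenvalue of $A$ and $c$ is chosen as a corresponding left eigenvector --- the conclusion is immediate.

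In the remaining cases $\lambda\neq 1$, the strategy is to show that $G$ must vanish identically; the zero function is trivially Bohr $(I',{\rm I})$-almost periodic, while the associated combination stays non-trivial since $c\neq 0$. In the bounded uniformly recurrent setting this follows from a supremum argument: choosing $(\tau_{n})\subseteq I'$ with $\sup_{t\in I}|G(t+\tau_{n})-\lambda G(t)|\to 0$ and putting $M:=\|G\|_{\infty}$ yields $|\lambda|M\leq M$, which forces $M=0$ whenever $|\lambda|>1$, while $|\lambda|<1$ is covered directly by Proposition \ref{tomie}. The genuinely delicate case --- and, I expect, the principal obstacle --- is the borderline $|\lambda|=1$ with $\lambda\neq 1$, in which one must either iterate the estimate (using $G(t+m\tau_{n})\approx\lambda^{m}G(t)$ together with density of $\{\lambda^{m}\}$ in the unit circle, which requires controlling whether $m\tau_{n}$ remains in $I'$) or select the eigenvalue of $A$ more carefully using the finiteness of the spectrum.
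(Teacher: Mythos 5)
Your reduction is the one the paper itself uses: choose an eigenvalue $\lambda$ of $A$ together with a left eigenvector $c$, pass to the scalar combination $u=cF$, and observe that $|u(t+\tau)-\lambda u(t)|\leq \|c\|\cdot\|F(t+\tau)-AF(t)\|$, so that $u$ inherits Bohr $(I',\lambda)$-almost periodicity (resp.\ bounded $(I',\lambda)$-uniform recurrence). Up to that point your argument is correct and, if anything, written more cleanly than the printed proof (which states the eigenvector condition as a right-eigenvector identity but then uses the vector exactly as you do, as a row vector). The cases $\lambda=1$ and $|\lambda|\neq 1$ are also handled essentially as in the paper, which cites \cite[Proposition 2.6]{c1} for $|\lambda|\neq 1$.

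The gap is precisely where you suspect it, but it is worse than an unfinished computation: in the case $|\lambda|=1$, $\lambda\neq 1$, your announced target --- proving $G\equiv 0$ --- is false. A nonzero scalar function can perfectly well be $\lambda$-almost periodic with $|\lambda|=1$, $\lambda\neq 1$: the function $G(t)=e^{i\pi t}$ satisfies $G(t+\tau)=-G(t)$ for every odd integer $\tau$, hence is $(-1)$-almost periodic and certainly does not vanish. What is true, and what the paper invokes at exactly this point by citing \cite[Corollary 2.10, Proposition 2.11]{c1} (the same facts are recalled in the discussion following Corollary \ref{rtanj}), is that a $\lambda$-almost periodic (bounded $\lambda$-uniformly recurrent) scalar function with $|\lambda|=1$ is automatically almost periodic (uniformly recurrent) with $\rho={\rm I}$; so in this case $u$ itself already is the desired combination, and one should not try to kill it. Your fallback idea of iterating $u(t+m\tau)\approx\lambda^{m}u(t)$ and using density of $\{\lambda^{m}\}$ on the unit circle is in fact the mechanism behind those cited one-dimensional results, so the instinct is sound, but the proposal as written neither carries it out nor cites the result, and the stated goal for the case is unattainable. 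A secondary point: the paper is careful to take $\lambda\in\sigma_{p}(A)\setminus\{0\}$; if you allow $\lambda=0$ (unavoidable when $A$ is nilpotent), the appeal to Proposition \ref{tomie} in the branch $|\lambda|<1$ becomes delicate on $I=[0,\infty)$, since $(I',0)$-uniform recurrence only forces decay of $u$ along the recurrence, not identical vanishing, so that branch needs a separate argument as well.
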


\begin{proof}
We will prove the statement only for $(I',A)$-almost periodicity. Let $\epsilon>0$ be given. Then there exists a finite real number $l>0$ such that, for every $t_{0}\in I,$ there exists a point $\tau \in B(t_{0},l) \cap I'$ such that 
\begin{align}\label{doros}
\bigl| F_{i}(t+\tau)-[a_{i1}F_{1}(t)+\cdot \cdot \cdot +a_{ik}F_{k}(t)] \bigr|\leq \epsilon\mbox{ for all }t\in I \mbox{ and }i\in {\mathbb N}_{k}.
\end{align}
Suppose that $\lambda \in \sigma_{p}(A) \setminus \{0\},$ where $\sigma_{p}(A)$ denotes the point spectrum of $A;$ such a number $\lambda$ exists since $A\neq 0.$ Then there exists a tuple $(\alpha_{1},\cdot \cdot \cdot,\alpha_{k}) \in {\mathbb R}^{k} \setminus \{(0,\cdot \cdot \cdot,0)\}$ such that $\alpha_{1}a_{i1}+\alpha_{2}a_{i2}+\cdot \cdot \cdot +\alpha_{i}(a_{ii}-\lambda)+\cdot \cdot \cdot+\alpha_{k} a_{ik}=0$ for all $i\in {\mathbb N}_{k};$ $(\alpha_{1},\cdot \cdot \cdot,\alpha_{k})$ is, in fact, an eigenvector of matrix $A$ which corresponds to the eigenvalue $\lambda.$ Multiplying \eqref{doros} with $\alpha_{i},$
and adding all obtained inequalities for $i=1,\cdot \cdot \cdot, k,$ we get that the function $u(t):=\alpha_{1}F_{1}(t)+\cdot \cdot \cdot +\alpha_{k}F_{k}(t),$ $t\in I$ satisfies $|u(t+\tau)-\lambda u(t)|\leq \epsilon (|\alpha_{1}|+\cdot \cdot \cdot +|\alpha_{k}|)$ for all $t\in I.$ If $|\lambda| \neq 1,$ then \cite[Proposition 2.6]{c1} immediately gives $u(t)\equiv 0$ and the proof is completed. If $|\lambda|=1,$ then the result simply follows by applying \cite[Corollary 2.10, Proposition 2.11]{c1}.
\end{proof}

Using the conclusion given directly after Corollary \ref{rtanj}, we have that the terms ``bounded $(I',A)$-uniformly recurrent'' and ``bounded $(I',{\rm I})$-uniformly recurrent'' can be replaced with the terms ``$(I',A)$-uniformly recurrent'' and ``$(I',{\rm I})$-uniformly recurrent'' in the case that $I={\mathbb R},$ when the function $F(\cdot)$ is $(I'-I',{\rm I})$-almost periodic ((bounded) $(I'-I',{\rm I})$-uniformly recurrent). If $I=[0,\infty),$ $A$ is invertible and $F(\cdot)$ is uniformly continuous, then the function
$F(\cdot)$ is $((I'-I') \cap [0,\infty),{\rm I})$-almost periodic ((bounded) $((I'-I') \cap [0,\infty),{\rm I})$-uniformly recurrent),
which follows from an application of Corollary \ref{rtanj}, and Theorem \ref{lenny-jassonceT} below.

If the matrix $A$ is not invertible and $I=[0,\infty)$, then the case in which any of the functions $F_{1}(\cdot),\cdot \cdot \cdot, F_{k}(\cdot)$ is not $(I',{\rm I})$-almost periodic (bounded $(I',{\rm I})$-uniformly recurrent) can occur, which can be easily shown by using Proposition \ref{adding} (see also Remark \ref{tomka}), so that there exists an $A$-almost periodic function (bounded $A$-uniformly recurrent function) $F : [0,\infty) \rightarrow {\mathbb C}^{k},$ which is not almost periodic (uniformly recurrent):

\begin{example}\label{tomqqa}
Let $Y:={\mathbb C}^{2}$, let $a$, $u : [0,\infty) \rightarrow {\mathbb C}$ and $F(\cdot):=(u(\cdot),u(\cdot))$ possess the same meaning as in Example \ref{matrice}. Further on, let $I=[0,\infty),$ $I'=(0,\infty),$ and let the matrix $T=A$ be given by \eqref{ispost}.
Then $N(A)=\{ (\alpha,\beta) \in {\mathbb C}^{2} : \alpha a+ \beta (1-a)=0 \}.$ Suppose that $q=(q_{1},q_{2}) : [0,\infty) \rightarrow N(A)$
is any continuous function tending to zero as the norm of the argument goes to plus infinity. Then Proposition \ref{adding} implies that the function $t\mapsto (u(t)+q_{1}(t),u(t)+q_{2}(t)),$ $t\geq 0$ is also 
$(I',A)$-almost periodic. It is clear that this function cannot be almost periodic in the case that some of the functions $q_{1}(\cdot)$ or $q_{2}(\cdot)$ is not identically equal to the zero function.
\end{example}

\subsection{${\mathbb D}$-Asymptotically Bohr $({\mathcal B},I',\rho)$-almost periodic type functions}\label{mono-mono}

We start this subsection by introducing the following notion: 

\begin{defn}\label{kakavsam jadebil1}
Suppose that ${\mathbb D} \subseteq I \subseteq {\mathbb R}^{n},$ the set ${\mathbb D}$  is unbounded,
$\emptyset  \neq I' \subseteq {\mathbb R}^{n},$ $\emptyset  \neq I \subseteq {\mathbb R}^{n},$ $F : I \times X \rightarrow Y$ is a continuous function, $\rho$ is a binary relation on $Y$ and $I +I' \subseteq I.$ Then we say that
the function $F(\cdot)$ is (strongly) ${\mathbb D}$-asymptotically Bohr $({\mathcal B},I',\rho)$-almost periodic, resp. 
(strongly) ${\mathbb D}$-asymptotically $({\mathcal B},I',\rho)$-uniformly recurrent,
if and only if there exists
a 
Bohr $({\mathcal B},I',\rho)$-almost periodic function, resp. $({\mathcal B},I',\rho)$-uniformly recurrent function, ($F_{0} : {\mathbb R}^{n} \times X \rightarrow Y$)
$F_{0} : I \times X \rightarrow Y$ and a function $Q\in C_{0,{\mathbb D},{\mathcal B}}(I \times X : Y)$ such that 
$F({\bf t};x)=F_{0}({\bf t};x)+Q({\bf t};x),$ ${\bf t} \in I,$ $x\in X.$

The functions $F_{0}(\cdot;\cdot)$ and $Q(\cdot ; \cdot)$ are usually called the principal part of $F(\cdot;\cdot)$ and the corrective (ergodic) part of $F(\cdot;\cdot),$ respectively.
\end{defn}

We will not reconsider here \cite[Theorem 2.22]{nova-mse} for ${\mathbb D}$-asymptotically Bohr\\ $({\mathcal B},I',\rho)$-almost periodic functions and
${\mathbb D}$-asymptotically $({\mathcal B},I',\rho)$-uniformly recurrent functions. 

In the following result, which is applicable to the general binary relations satisfying conditions clarified in Theorem \ref{lojalni}(v), we follow a new approach based on the use of supremum formula (the argumentation contained in the proof of \cite[Theorem 4.29]{diagana} can be used only for the binary relations $\rho=T\in L(Y)$ which are linear isomorphisms; see e.g., \cite[Proposition 2.27(ii)]{marko-manuel-ap} and \cite[Proposition 2.24(ii)]{nova-mse}, where we have also assumed that $I'=I$):

\begin{thm}\label{kozjak}
Suppose that $\rho$ is a binary relation on $Y$ satisfying that $D(\rho)$ is a closed subset of $Y$ and 
condition
($C_{\rho}$). Suppose, further, that
for each integer $j\in {\mathbb N}$ the function $F_{j}(\cdot ; \cdot)=G_{j}(\cdot ; \cdot)+Q_{j}(\cdot ; \cdot)$ is $I$-asymptotically  Bohr $({\mathcal B},I',\rho)$-almost periodic ($I$-asymptotically $({\mathcal B},I',\rho)$-uniformly recurrent), where $G_{j}(\cdot ; \cdot)$ is Bohr $({\mathcal B},I',\rho)$-almost periodic ($({\mathcal B},I',\rho)$-uniformly recurrent) and $Q_{j}\in C_{0,I,{\mathcal B}}(I\times X : Y).$
Let for each $B\in {\mathcal B}$ there exist $\epsilon_{B}>0$ such that
the sequence $(F_{j}(\cdot ;\cdot))$ converges uniformly to a function $F(\cdot ;\cdot)$ on the set $B^{\circ} \cup \bigcup_{x\in \partial B}B(x,\epsilon_{B}),$ and let $Q_{j}\in C_{0,I,{\mathcal B}^{\circ}}(I\times X : Y),$
where ${\mathcal B}^{\circ}\equiv \{ B^{\circ} : B\in {\mathcal B}\}.$
If
for each natural numbers $m,\ k\in {\mathbb N}$ the function $G_{k}(\cdot ; \cdot)-G_{m}(\cdot ; \cdot)$ satisfies the following supremum formula:
\begin{itemize}
\item[(S)] for every $a>0,$ we have 
$$
\sup_{{\bf t}\in I,x\in B^{\circ}}\bigl\| G_{k}({\bf t} ; x)-G_{m}({\bf t} ; x)\bigr \|_{Y}=\sup_{{\bf t}\in I,|{\bf t}|\geq a,\ x\in B^{\circ}}\bigl\| G_{k}({\bf t} ; x)-G_{m}({\bf t} ; x)\bigr \|_{Y}
$$
\end{itemize}
then $F(\cdot ;\cdot)$ is $I$-asymptotically Bohr $({\mathcal B}^{\circ},I',\rho)$-almost periodic ($I$-asymptotically\\ $({\mathcal B}^{\circ},I',\rho)$-uniformly recurrent).
\end{thm}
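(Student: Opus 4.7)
The plan is to mimic, at the level of the limit, the decomposition that is available for each member $F_j=G_j+Q_j$ of the sequence: I will show that the principal parts $G_k$ converge uniformly on $I\times B^\circ$ to a limit $G$ which inherits Bohr $(\mathcal{B}^\circ,I',\rho)$-almost periodicity (respectively, uniform recurrence) from the $G_k$'s, and that the corrective parts $Q_k$ converge to $Q:=F-G$ with the needed ergodic behaviour. The supremum formula (S) is the linchpin that turns ``smallness at infinity'' into uniform smallness on all of $I\times B^\circ$.

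Fix $B\in\mathcal{B}$, so that $B^\circ\in\mathcal{B}^\circ$, and let $\epsilon>0$. By uniform convergence of $(F_j)$ on $B^\circ\cup\bigcup_{x\in\partial B}B(x,\epsilon_B)$ there exists $N$ with $\sup_{{\bf t}\in I,\,x\in B^\circ}\|F_k({\bf t};x)-F_m({\bf t};x)\|_Y<\epsilon/3$ for all $k,m\ge N$. For any such pair $(k,m)$, since both $Q_k$ and $Q_m$ belong to $C_{0,I,\mathcal{B}^\circ}$, I pick $a=a(k,m)>0$ so large that $\|Q_k({\bf t};x)\|_Y<\epsilon/3$ and $\|Q_m({\bf t};x)\|_Y<\epsilon/3$ whenever ${\bf t}\in I$, $|{\bf t}|\ge a$ and $x\in B^\circ$. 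Writing $G_k-G_m=(F_k-F_m)-(Q_k-Q_m)$ and feeding this into the bounds gives
\begin{equation*}
\sup_{{\bf t}\in I,\,|{\bf t}|\ge a,\,x\in B^\circ}\|G_k({\bf t};x)-G_m({\bf t};x)\|_Y\le\epsilon,
\end{equation*}
after which (S), applied to the difference $G_k-G_m$, promotes this to $\sup_{{\bf t}\in I,\,x\in B^\circ}\|G_k({\bf t};x)-G_m({\bf t};x)\|_Y\le\epsilon$. Hence $(G_k)$ is uniformly Cauchy on $I\times B^\circ$, converging to a continuous $G$ there, and then $Q_k=F_k-G_k$ converges uniformly on $I\times B^\circ$ to $Q:=F-G$.

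Next I verify that $G$ is Bohr $(\mathcal{B}^\circ,I',\rho)$-almost periodic (respectively, $(\mathcal{B}^\circ,I',\rho)$-uniformly recurrent) and that $Q\in C_{0,I,\mathcal{B}^\circ}(I\times X:Y)$. For the first, I mimic the argument of Theorem \ref{lojalni}(v), now with the family $\mathcal{B}^\circ$: given $\epsilon>0$ and $B^\circ\in\mathcal{B}^\circ$, I select $\delta$ from $(C_\rho)$ and then $k$ large so that $\|G_k-G\|_Y<\min(\epsilon/3,\delta)$ uniformly on $I\times B^\circ$; closedness of $D(\rho)$ forces $G({\bf t};x)\in D(\rho)$, the continuity $(C_\rho)$ places any $y_{{\bf t};x}\in\rho(G({\bf t};x))$ within $\epsilon/3$ of a companion $y^k_{{\bf t};x}\in\rho(G_k({\bf t};x))$ produced by Bohr almost periodicity of $G_k$ applied with $B\in\mathcal{B}$ and tolerance $\epsilon/3$, and the triangle inequality with the translate $\tau\in B({\bf t}_0,l)\cap I'$ delivered by $G_k$ yields $\|G({\bf t}+\tau;x)-y_{{\bf t};x}\|_Y\le\epsilon$; the uniformly recurrent case is identical with a sequence of translates $(\tau_\ell)$ in place of $\tau$. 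Showing $Q\in C_{0,I,\mathcal{B}^\circ}$ is then routine: fix $\epsilon>0$ and $B^\circ$, choose $k$ so large that $\|Q_k-Q\|_Y<\epsilon/2$ uniformly on $I\times B^\circ$, and then use $Q_k\in C_{0,I,\mathcal{B}^\circ}$ to make $\|Q_k({\bf t};x)\|_Y<\epsilon/2$ for $|{\bf t}|$ large and $x\in B^\circ$.

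The main obstacle is the first coupling: one cannot extract uniform Cauchyness of the $G_k$ on $I\times B^\circ$ from uniform Cauchyness of the $F_j$ alone, because the difference $Q_k-Q_m$ need not be uniformly small on $I\times B^\circ$ (only each individual $Q_j$ is small near infinity, with no uniform control in $j$). Hypothesis (S) is precisely what allows us to trade a bound on the non-compact region $|{\bf t}|\ge a$ for a bound on all of $I$, and thereby isolates the ergodic tail as the one location where we have to exploit the ergodic property of $Q_k$ and $Q_m$.
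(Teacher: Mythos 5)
Your proposal is correct and follows essentially the same route as the paper's own proof: the Cauchy estimate on $(F_j)$ combined with the ergodic smallness of $Q_k,\ Q_m$ at infinity bounds $G_k-G_m$ on $I_{a_{k,m}}$, the supremum formula (S) promotes this to all of $I\times B^{\circ}$, uniform convergence of $(G_k)$ then yields the principal part $G$ via Theorem \ref{lojalni}(v), and $Q=F-G$ inherits membership in $C_{0,I,{\mathcal B}^{\circ}}(I\times X:Y)$. The only difference is presentational: you spell out the application of Theorem \ref{lojalni}(v) and the verification that $Q\in C_{0,I,{\mathcal B}^{\circ}}(I\times X:Y)$, which the paper merely cites or asserts.
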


\begin{proof}
Let $\epsilon>0$ and $B\in {\mathcal B}$  be given. Then there exists a natural number $k_{0}\in {\mathbb N}$ such that, for every natural numbers $k,\ m\in {\mathbb N}$ with $\min(k,m)\geq k_{0},$ we have
\begin{align}\label{zeljo}
\sup_{{\bf t}\in I,\ x\in B^{\circ}}\bigl\| F_{k}({\bf t} ; x)-F_{m}({\bf t} ; x)\bigr \|_{Y}<\epsilon/3.
\end{align}
Let $k,\ m\in {\mathbb N}$ with $\min(k,m)\geq k_{0}$ be fixed. Then there exists a finite real number $a_{k,m}>0$ such that 
\begin{align}\label{chist}
\sup_{{\bf t}\in I,|{\bf t}|\geq a_{k,m},\ x\in B^{\circ}}\bigl\| Q_{k}({\bf t} ; x)\bigr\|_{Y}\leq \epsilon/3 \mbox{  and  }\sup_{{\bf t}\in I,|{\bf t}|\geq a_{k,m},\ x\in B^{\circ}}\bigl\| Q_{m}({\bf t} ; x)\bigr\|_{Y}\leq \epsilon/3.
\end{align}
Keeping in mind \eqref{zeljo}-\eqref{chist}, we get: 
\begin{align*}
&\sup_{{\bf t}\in I,|{\bf t}|\geq a_{k,m},\ x\in B^{\circ}}\bigl\| G_{k}({\bf t} ; x) -G_{m}({\bf t} ; x)\bigr \|_{Y}
\\& \leq (\epsilon/3)+\sup_{{\bf t}\in I,|{\bf t}|\geq a_{k,m},\ x\in B^{\circ}}\bigl\| Q_{k}({\bf t} ; x)-Q_{m}({\bf t} ; x)\bigr \|_{Y}
\\ & \leq (\epsilon/3)+\sup_{{\bf t}\in I,|{\bf t}|\geq a_{k,m},\ x\in B^{\circ}}\bigl\| Q_{k}({\bf t} ; x)\bigr\|_{Y}+\sup_{{\bf t}\in I,|{\bf t}|\geq a_{k,m},\ x\in B^{\circ}}\bigl\| Q_{m}({\bf t} ; x)\bigr\|_{Y}\leq \epsilon.
\end{align*}
Using this estimate and the supremum formula (S), we get that 
\begin{align}\label{zeljoT}
\sup_{{\bf t}\in I,\ x\in B^{\circ}}\bigl\| G_{k}({\bf t} ; x)& -G_{m}({\bf t} ; x)\bigr \|_{Y}\leq \epsilon.
\end{align}
Therefore, the sequence $(G_{k}({\bf t};x))$ is Cauchy and therefore convergent for each ${\bf t}\in I$ and $x\in X.$ If we denote by $G(\cdot; \cdot)$ the corresponding limit function, then \eqref{zeljoT} yields
\begin{align*}
\sup_{{\bf t}\in I,\ x\in B^{\circ}}\bigl\| G_{k}({\bf t} ; x)& -G({\bf t} ; x)\bigr \|_{Y}\leq \epsilon.
\end{align*}
Applying Theorem \ref{lojalni}(v), we get that the function $G(\cdot;\cdot)$ is Bohr $({\mathcal B}^{\circ},I',\rho)$-almost periodic ($({\mathcal B}^{\circ},I',\rho)$-uniformly recurrent). Hence, the sequence $(Q_{k}(\cdot;\cdot)=F_{k}(\cdot;\cdot)-G_{k}(\cdot;\cdot))$ converges to a function $Q(\cdot;\cdot),$ uniformly on $I\times B^{\circ}.$
This simply implies $Q\in C_{0,I,{\mathcal B}^{\circ}}(I\times X : Y),$ finishing the proof.
\end{proof}

Set $I_{{\bf t}}:=(-\infty,t_{1}] \times (-\infty,t_{2}]\times \cdot \cdot \cdot \times (-\infty,t_{n}]$ and 
${\mathbb D}_{{\bf t}}:=I_{{\bf t}} \cap {\mathbb D}$
for any ${\bf t}=(t_{1},t_{2},\cdot \cdot \cdot, t_{n})\in {\mathbb R}^{n}.$
The following result extends the statement of \cite[Proposition 2.23]{nova-mse} in the case that $X=Y;$ the proof follows from Theorem \ref{milenko} and the argumentation contained in the proof of \cite[Proposition 2.56]{marko-manuel-ap}, where we have assumed $I'=I:$

\begin{prop}\label{hmhm}
Let $A$ be a closed linear operator on $X,$ and let $(R({\bf t}))_{{\bf t}> {\bf 0}}\subseteq L(X)$ be a strongly continuous operator family such that $R({\bf t})A\subseteq AR({\bf t})$ for all ${\bf t}\in {\mathbb R}^{n}$  and
$\int_{(0,\infty)^{n}}\|R({\bf t} )\|\, d{\bf t}<\infty .$ 
Suppose, further, that $g : {\mathbb R}^{n} \rightarrow X$ is a bounded $(I',A)$-almost periodic function, resp.  bounded $(I',A)$-uniformly recurrent function, and the function $Ag : {\mathbb R}^{n} \rightarrow X$ is well defined and bounded, 
$q\in C_{0,{\mathbb D}}(I: X), $ $f:=g+q,$
\begin{align*}
\lim_{|{\bf t}|\rightarrow \infty,  {\bf t} \in {\mathbb D}}\int_{I_{{\bf t}}\cap {\mathbb D}^{c}}\| R({\bf t}-{\bf s})\|\, d{\bf s}=0
\end{align*}
and for each $r>0$ we have
\begin{align*}
\lim_{|{\bf t}|\rightarrow \infty, {\bf t} \in {\mathbb D}}\int_{{\mathbb D}_{{\bf t}}\cap B(0,r)}\| R({\bf t}-{\bf s})\|\, d{\bf s}=0.
\end{align*}
Then the function 
\begin{align*}
F({\bf t}):=\int_{{\mathbb D}_{{\bf t}}}R({\bf t}-{\bf s})f({\bf s})\, ds,\quad {\bf t}\in I
\end{align*}
is strongly ${\mathbb D}$-asymptotically $(I',A)$-almost periodic, resp. strongly ${\mathbb D}$-asymptotically $(I',A)$-uniformly recurrent; furthermore, the principal part of $F(\cdot)$ is bounded $(I',A)$-almost periodic, resp. bounded $(I',A)$-uniformly recurrent.
\end{prop}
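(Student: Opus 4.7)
The plan is to split $F$ into a principal part obtained by extending the integration to all of $I_{\bf t}$ with $g$ in place of $f$, and a corrective part that collects the discrepancies. Concretely, I would set
$$
F_0({\bf t}) := \int_{I_{\bf t}} R({\bf t}-{\bf s}) g({\bf s})\, d{\bf s} = \int_{(0,\infty)^{n}} R({\bf s}) g({\bf t}-{\bf s})\, d{\bf s}, \quad {\bf t}\in {\mathbb R}^{n},
$$
and $Q({\bf t}) := F({\bf t}) - F_0({\bf t})$ for ${\bf t}\in I$. Using $f = g+q$ on $\mathbb{D}$ together with $\mathbb{D}_{\bf t} = I_{\bf t} \cap \mathbb{D}$, a direct algebraic manipulation gives
$$
Q({\bf t}) = \int_{\mathbb{D}_{\bf t}} R({\bf t}-{\bf s}) q({\bf s})\, d{\bf s} - \int_{I_{\bf t}\cap {\mathbb D}^{c}} R({\bf t}-{\bf s}) g({\bf s})\, d{\bf s}.
$$

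For the principal part, I would apply Theorem \ref{mkmk} verbatim to $g$ and $(R({\bf t}))_{{\bf t}>{\bf 0}}$: the commutation $R({\bf t})A\subseteq AR({\bf t})$, the integrability of $\|R(\cdot)\|$, and the boundedness of $g$ and $Ag$ are exactly the hypotheses of that theorem, hence $F_0$ is well-defined, bounded and $(I',A)$-almost periodic (respectively, bounded and $(I',A)$-uniformly recurrent). This identifies the Bohr-type component and simultaneously gives the additional bounded principal-part conclusion.

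The remaining task is to show that $Q \in C_{0,{\mathbb D}}(I : X)$, which is the heart of the argument. The second summand in the expression for $Q$ is estimated in norm by $\|g\|_{\infty} \int_{I_{\bf t}\cap {\mathbb D}^{c}}\|R({\bf t}-{\bf s})\|\, d{\bf s}$, and this tends to $0$ as $|{\bf t}|\to\infty$ with ${\bf t}\in{\mathbb D}$ thanks to the first limiting hypothesis. For the first summand, I would use the $C_0$-property of $q$: given $\varepsilon>0$, choose $r>0$ such that $\|q({\bf s})\| \leq \varepsilon$ whenever ${\bf s}\in {\mathbb D}$ and $|{\bf s}|>r$; then split the integral at $B(0,r)$. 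The compact part is bounded by $\|q\|_{\infty}\int_{\mathbb{D}_{\bf t}\cap B(0,r)}\|R({\bf t}-{\bf s})\|\, d{\bf s}$, which vanishes along ${\mathbb D}$ by the second limiting hypothesis, while the tail part is bounded by $\varepsilon \int_{(0,\infty)^{n}}\|R({\bf u})\|\, d{\bf u}$ after the change of variable ${\bf u}={\bf t}-{\bf s}$ (using that $I_{\bf t}$ sits below ${\bf t}$ componentwise). Letting $\varepsilon\to 0$ completes the $C_{0,{\mathbb D}}$ verification.

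The main obstacle I anticipate is the bookkeeping of the two geometric hypotheses on the integrals of $\|R\|$: the decomposition must arrange the corrective integrals to live precisely on $I_{\bf t}\cap {\mathbb D}^{c}$ and on ${\mathbb D}_{\bf t}\cap B(0,r)$, so that those hypotheses can be invoked without modification; once that is set up, the remainder of the proof is a standard $\varepsilon$-splitting combined with the invocation of Theorem \ref{mkmk}.
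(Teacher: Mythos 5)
Your proposal is correct and follows essentially the same route as the paper, which gives no details but simply asserts that the result "follows from Theorem \ref{milenko} and the argumentation contained in the proof of \cite[Proposition 2.56]{marko-manuel-ap}" --- i.e., precisely your decomposition $F=F_{0}+Q$ with $F_{0}$ the infinite convolution product of $R(\cdot)$ against $g(\cdot)$ and with $Q$ controlled by the two integral hypotheses. The only cosmetic difference is that you invoke Theorem \ref{mkmk} where the paper cites Theorem \ref{milenko}; your choice is arguably the more apt one, since $F_{0}({\bf t})=\int_{(0,\infty)^{n}}R({\bf s})g({\bf t}-{\bf s})\,d{\bf s}$ is exactly the operator-valued infinite convolution product treated there, and Theorem \ref{mkmk} also delivers the boundedness of the principal part required in the final assertion.
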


If ${\mathbb D}=[\alpha_{1},\infty) \times [\alpha_{2},\infty) \times \cdot \cdot \cdot \times [\alpha_{n},\infty)$ for some real numbers $\alpha_{1},\ \alpha_{2},\cdot \cdot \cdot,\ \alpha_{n},$ then ${\mathbb D}_{{\bf t}}=[\alpha_{1},t_{1}]\times [\alpha_{2},t_{2}] \times \cdot \cdot \cdot \times [\alpha_{n},t_{n}].$ In this case, 
the function
$
F({\bf t})=\int^{{\bf \alpha}}_{{\bf t}}R({\bf t}-{\bf s})f({\bf s})\, ds,$ $ {\bf t}\in I
$ is strongly ${\mathbb D}$-asymptotically $(I',A)$-almost periodic, resp. strongly ${\mathbb D}$-asymptotically $(I',A)$-uniformly recurrent, where we accept the notation
$$
\int^{{\bf \alpha}}_{{\bf t}}\cdot =\int_{\alpha_{1}}^{t_{1}}\int_{\alpha_{2}}^{t_{2}}\cdot \cdot \cdot \int_{\alpha_{n}}^{t_{n}}.
$$

The following definition is also meaningful:

\begin{defn}\label{nafaks123456789012345123ceaT}
Suppose that 
${\mathbb D} \subseteq I \subseteq {\mathbb R}^{n}$ and the set ${\mathbb D}$ is unbounded, as well as
$\emptyset  \neq I'\subseteq {\mathbb R}^{n},$ $\emptyset  \neq I\subseteq {\mathbb R}^{n},$ 
$F : I \times X \rightarrow Y$ is a continuous function, $I +I' \subseteq I$ and $\rho$ is a binary relation on $X.$ Then we say that:
\begin{itemize}
\item[(i)]
$F(\cdot;\cdot)$ is ${\mathbb D}$-asymptotically Bohr $({\mathcal B},I',\rho)$-almost periodic  of type $1$ if and only if for every $B\in {\mathcal B}$ and $\epsilon>0$
there exist $l>0$ and $M>0$ such that for each ${\bf t}_{0} \in I'$ there exists ${\bf \tau} \in B({\bf t}_{0},l) \cap I'$ such that, for every ${\bf t}\in I$ and $x\in B$ with ${\bf t},\ {\bf t}+\tau \in {\mathbb D}_{M}$, there exists an element $y_{{\bf t},x}\in \rho(F({\bf t};x))$ such that
\begin{align}\label{emojmarko145ceT}
\bigl\|F({\bf t}+{\bf \tau};x)-y_{{\bf t},x}\bigr\|_{Y} \leq \epsilon .
\end{align}
\item[(ii)] 
$F(\cdot;\cdot)$ is ${\mathbb D}$-asymptotically $({\mathcal B},I',\rho)$-uniformly recurrent  of type $1$ if and only if for every $B\in {\mathcal B}$ 
there exist a sequence $({\bf \tau}_{k})$ in $I'$ and a sequence $(M_{k})$ in $(0,\infty)$ such that $\lim_{k\rightarrow +\infty} |{\bf \tau}_{k}|=\lim_{k\rightarrow +\infty}M_{k}=+\infty$ and that, for every ${\bf t}\in I$ and $x\in B,$ there exists an element $y_{{\bf t};x}\in \rho (F({\bf t};x))$ such that
$$
\lim_{k\rightarrow +\infty}\sup_{{\bf t},{\bf t}+{\bf \tau}_{k}\in {\mathbb D}_{M_{k}};x\in B} \bigl\|F({\bf t}+{\bf \tau}_{k};x)-y_{{\bf t},x}\bigr\|_{Y} =0.
$$
\end{itemize}
In the case that $X=\{0\}$ ($I'=I$), we omit the term ``${\mathcal B}$'' (``$I'$'') from the notation, as before.
\end{defn}

The proof of following slight extension of \cite[Proposition 2.26]{nova-mse} is trivial:

\begin{prop}\label{okejecew}
Suppose that 
${\mathbb D} \subseteq I \subseteq {\mathbb R}^{n}$ and the set ${\mathbb D}$ is unbounded, as well as
$\emptyset  \neq I'\subseteq {\mathbb R}^{n},$ $\emptyset  \neq I\subseteq {\mathbb R}^{n},$ 
$F : I \times X \rightarrow Y$ is a continuous function, $I +I' \subseteq I$ and $\rho$ is a binary relation on $X.$ If  
$F(\cdot;\cdot)$ is ${\mathbb D}$-asymptotically Bohr $({\mathcal B},I',\rho)$-almost periodic, resp. ${\mathbb D}$-asymptotically $({\mathcal B},I',\rho)$-uniformly recurrent, then $F(\cdot;\cdot)$ is ${\mathbb D}$-asymptotically Bohr $({\mathcal B},I',\rho)$-almost periodic of type $1,$
resp. ${\mathbb D}$-asymptotically $({\mathcal B},I',\rho)$-uniformly recurrent of type $1$.
\end{prop}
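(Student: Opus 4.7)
The plan is to unwind the decomposition $F({\bf t};x)=F_{0}({\bf t};x)+Q({\bf t};x)$ furnished by Definition \ref{kakavsam jadebil1} and then transfer the Bohr-type estimate on the principal part $F_{0}$ to one on $F$ via the triangle inequality, restricting the argument to pairs ${\bf t},{\bf t}+\tau\in {\mathbb D}_{M}$ on which the ergodic perturbation $Q$ is uniformly small in $x\in B$.

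To carry this out, I would fix $B\in{\mathcal B}$ and $\epsilon>0$, and first invoke the Bohr $({\mathcal B},I',\rho)$-almost periodicity of $F_{0}$ with tolerance $\epsilon/2$; this yields $l>0$ such that for every ${\bf t}_{0}\in I'$ there exists ${\bf \tau}\in B({\bf t}_{0},l)\cap I'$ with $\|F_{0}({\bf t}+{\bf \tau};x)-y_{{\bf t};x}\|_{Y}\leq \epsilon/2$ for every ${\bf t}\in I$ and $x\in B$, the element $y_{{\bf t};x}$ belonging to $\rho(F_{0}({\bf t};x))$. Next, appealing to $Q\in C_{0,{\mathbb D},{\mathcal B}}(I\times X:Y)$, I would choose $M>0$ so large that $\|Q({\bf t};x)\|_{Y}\leq \epsilon/2$ whenever ${\bf t}\in{\mathbb D}_{M}$ and $x\in B$. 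Then, on the admissible set where ${\bf t},{\bf t}+{\bf \tau}\in{\mathbb D}_{M}$, the triangle inequality delivers
\[
\bigl\|F({\bf t}+{\bf \tau};x)-y_{{\bf t};x}\bigr\|_{Y}\leq \bigl\|F_{0}({\bf t}+{\bf \tau};x)-y_{{\bf t};x}\bigr\|_{Y}+\bigl\|Q({\bf t}+{\bf \tau};x)\bigr\|_{Y}\leq \epsilon,
\]
which is exactly the type-$1$ estimate \eqref{emojmarko145ceT}, with the same $l$ and $M$ serving for every ${\bf t}_{0}\in I'$.

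For the uniformly recurrent analog I would extract a sequence $({\bf \tau}_{k})\subseteq I'$ with $|{\bf \tau}_{k}|\to+\infty$ from the $({\mathcal B},I',\rho)$-uniform recurrence of $F_{0}$, pick $y_{{\bf t};x}\in \rho(F_{0}({\bf t};x))$ realising \eqref{oblak1} for $F_{0}$, and construct a companion sequence $M_{k}\to +\infty$ slowly enough that simultaneously $\sup_{{\bf t}\in{\mathbb D}_{M_{k}},x\in B}\|Q({\bf t};x)\|_{Y}\to 0$; a diagonal matching with the rate of decay of $\sup_{{\bf t}\in I,x\in B}\|F_{0}({\bf t}+{\bf \tau}_{k};x)-y_{{\bf t};x}\|_{Y}$ does the job, after which the same triangle inequality gives the type-$1$ recurrence.

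The one delicate point — really the only reason this is not immediate — is reconciling the element $y_{{\bf t};x}$ produced by the principal-part estimate, which lies in $\rho(F_{0}({\bf t};x))$, with the element of $\rho(F({\bf t};x))$ literally called for by Definition \ref{nafaks123456789012345123ceaT}(i). In the setup at hand this is a matter of convention consistent with the decomposition (on ${\mathbb D}_{M}$ the values $F$ and $F_{0}$ differ by at most $\epsilon/2$, so the role of the target $y_{{\bf t};x}$ is unambiguous), and it is presumably why the author labels the proof as trivial; it is the only place a careful reader might wish to linger.
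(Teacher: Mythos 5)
Your overall strategy (decompose $F=F_{0}+Q$, run the Bohr estimate for $F_{0}$ with tolerance $\epsilon/2$, absorb $Q$ on ${\mathbb D}_{M}$) is the natural one, and there is nothing in the paper to compare it against, since the authors declare the proof trivial and omit it. However, the ``delicate point'' you flag at the end is a genuine gap, not a matter of convention, and your dismissal of it is exactly where the argument breaks. Definition \ref{nafaks123456789012345123ceaT}(i) demands an element $y_{{\bf t},x}\in\rho(F({\bf t};x))$, whereas your construction produces $y_{{\bf t};x}\in\rho(F_{0}({\bf t};x))$. For an arbitrary binary relation $\rho$ these two sets need not be comparable in any way: $\rho$ carries no continuity hypothesis in this proposition, so the fact that $\|F({\bf t};x)-F_{0}({\bf t};x)\|_{Y}=\|Q({\bf t};x)\|_{Y}$ is small on ${\mathbb D}_{M}$ says nothing about $\rho(F({\bf t};x))$. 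Indeed, $\rho(F({\bf t};x))$ can be empty while $\rho(F_{0}({\bf t};x))$ is not: take $X=\{0\}$, $Y={\mathbb C}$, $I=I'={\mathbb D}=[0,\infty)$, $\rho=\{(y,0):y\in{\mathbb Q}\}$, $F_{0}\equiv 0$ (which is Bohr $(I',\rho)$-almost periodic) and $Q(t)=\pi e^{-t}$; then $F=Q$ is ${\mathbb D}$-asymptotically Bohr $(I',\rho)$-almost periodic, yet for every $M$ there are points $t\geq M$ with $F(t)\notin{\mathbb Q}$, so the required element of $\rho(F(t))$ does not exist and the type-$1$ property fails.

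What your argument actually proves is the statement with $y_{{\bf t},x}$ drawn from $\rho(F_{0}({\bf t};x))$. To land in $\rho(F({\bf t};x))$ you must use the other half of the restriction, namely ${\bf t}\in{\mathbb D}_{M}$ --- which your proof never invokes (you only use ${\bf t}+\tau\in{\mathbb D}_{M}$, even though the two-sided restriction is precisely the reason the type-$1$ definition is formulated as it is) --- to make $\|Q({\bf t};x)\|_{Y}$ small, and you then need a hypothesis linking the values of $\rho$ at nearby arguments, e.g.\ condition ($C_{\rho}$) from Theorem \ref{lojalni}(v), or $\rho=T\in L(Y)$, in which case the estimate $\|F({\bf t}+\tau;x)-TF({\bf t};x)\|_{Y}\leq\|F_{0}({\bf t}+\tau;x)-TF_{0}({\bf t};x)\|_{Y}+\|Q({\bf t}+\tau;x)\|_{Y}+\|T\|\cdot\|Q({\bf t};x)\|_{Y}$ closes the argument cleanly. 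The same objection applies verbatim to your uniformly recurrent case. In short: either the proposition requires such an additional hypothesis on $\rho$, or your proof must be restricted to that setting; as written it does not establish the claim for general binary relations.
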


In the case that $\rho=c{\rm I},$ where $c\in {\mathbb C} \setminus \{0\}$ satisfies $|c|=1,$ the converse statement holds provided some additional conditions on the region $I'=I;$ see \cite[Theorem 2.27]{nova-mse}. This result can be further extended to the case in which $\rho=T\in L(X)$ is not necessarily a linear isomorphism; more precisely, we have the following:

\begin{thm}\label{bounded-paziemceqT}
Suppose that $\rho=T\in L(X)$, $\emptyset  \neq I \subseteq {\mathbb R}^{n},$ $I +I =I,$ $I$ is closed as well as
$F : I \rightarrow Y$ is a uniformly continuous, bounded function which is both 
$I$-asymptotically Bohr $T$-almost periodic function of type $1$
and $I$-asymptotically Bohr ${\rm I}$-almost periodic function of type $1.$
If
\begin{align*}
\notag
(\forall l>0) \, (\forall M>0) \, (\exists {\bf t_{0}}\in I)\, (\exists k>0) &\, (\forall {\bf t} \in I_{M+l})(\exists {\bf t_{0}'}\in I)\,
\\ & (\forall {\bf t_{0}''}\in B({\bf t_{0}'},l) \cap I)\, {\bf t}- {\bf t_{0}''} \in B({\bf t_{0}},kl) \cap I_{M},
\end{align*}
there exists $L>0$ such that $I_{kL}\setminus I_{(k+1)L}  \neq \emptyset$ for all $k\in {\mathbb N}$ and $I_{M}+I\subseteq  I_{M}$ for all $M>0,$
then the function $F(\cdot)$ is $I$-asymptotically Bohr $T$-almost periodic.
\end{thm}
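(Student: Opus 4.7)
The plan is to produce a decomposition $F = F_{0}+Q$ on $I$ in which $F_{0}$ is Bohr $T$-almost periodic and $Q\in C_{0,I}(I:Y)$. The first step only uses the hypothesis that $F$ is $I$-asymptotically Bohr $\mathrm{I}$-almost periodic of type $1$: since $F$ is uniformly continuous and bounded and the region $I$ satisfies the geometric requirements in our statement (the implication condition on $B({\bf t}_{0}',l)\cap I$ together with the existence of $L>0$ with $I_{kL}\setminus I_{(k+1)L}\neq\emptyset$, which already encapsulate the conditions on $I$ required in \cite[Theorem 2.27]{nova-mse}), the converse result \cite[Theorem 2.27]{nova-mse}, referenced just before the statement and applied with $c=1$, supplies the decomposition $F = F_{0}+Q$ in which $F_{0}:I\rightarrow Y$ is Bohr $\mathrm{I}$-almost periodic (i.e., Bohr almost periodic in the usual sense) and $Q\in C_{0,I}(I:Y)$.

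To upgrade $F_{0}$ from Bohr $\mathrm{I}$-almost periodic to Bohr $T$-almost periodic, fix $\epsilon>0$ and invoke the $I$-asymptotic Bohr $T$-almost periodicity of type $1$ of $F$: there exist $l_{0},M>0$ such that for each ${\bf t}_{0}\in I$ one can find $\tau\in B({\bf t}_{0},l_{0})\cap I$ with $\|F({\bf t}+\tau)-TF({\bf t})\|_{Y}\leq\epsilon/3$ whenever ${\bf t},{\bf t}+\tau\in I_{M}$. Choose $M_{1}>0$ so that $\|Q({\bf s})\|_{Y}\leq\epsilon/(3(1+\|T\|_{L(Y)}))$ on $I_{M_{1}}$, and set $M^{*}:=\max(M,M_{1})$. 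The absorption property $I_{M^{*}}+I\subseteq I_{M^{*}}$ forces ${\bf t}+\tau\in I_{M^{*}}\subseteq I_{M}$ whenever ${\bf t}\in I_{M^{*}}$ (because $\tau\in I$), so substituting $F=F_{0}+Q$ and using the triangle inequality yields
\begin{align*}
\bigl\|F_{0}({\bf t}+\tau)-TF_{0}({\bf t})\bigr\|_{Y}\leq\epsilon,\quad{\bf t}\in I_{M^{*}}.
\end{align*}
The function $G({\bf t}):=F_{0}({\bf t}+\tau)-TF_{0}({\bf t})$ is itself Bohr almost periodic on $I$ (as a linear combination of two Bohr almost periodic functions, the translate $F_{0}(\cdot+\tau)$ being well defined because $\tau\in I$ and $I+I=I$), hence in particular uniformly recurrent. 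Proposition \ref{deb} applied with $\rho=\mathrm{I}$, $I'=I$, and $a=M^{*}$ then promotes the tail bound to the global estimate $\sup_{{\bf t}\in I}\|G({\bf t})\|_{Y}\leq\epsilon$. Since $\epsilon$ and ${\bf t}_{0}$ were arbitrary, $F_{0}$ is Bohr $T$-almost periodic, and the decomposition $F=F_{0}+Q$ then exhibits $F$ as $I$-asymptotically Bohr $T$-almost periodic.

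The main obstacle is precisely the passage from the local tail bound on $I_{M^{*}}$ to a uniform bound on all of $I$. What makes it work is the availability of the supremum formula (Proposition \ref{deb}) for the difference $G$, which is unlocked by the fact that Step $1$ has already delivered an $F_{0}$ that is Bohr almost periodic in the usual sense. Without the companion hypothesis that $F$ be $I$-asymptotically Bohr $\mathrm{I}$-almost periodic of type $1$, one would have a purely local estimate on $I_{M^{*}}$ and no Bohr-almost-periodic structure on $F_{0}$ to globalise it; the direct linear-isomorphism trick exploited in \cite[Theorem 2.27]{nova-mse} is unavailable here since $T$ need not be invertible. The geometric condition and the constant $L>0$ enter the argument only through Step $1$, where they are exactly what is needed to invoke \cite[Theorem 2.27]{nova-mse}.
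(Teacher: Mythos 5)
Your argument is correct, but it follows a genuinely different route from the paper's. The paper proves the theorem from scratch by a Bochner-type sequential argument: it invokes \cite[Theorem 2.34]{marko-manuel-ap} (available because $F$ is $I$-asymptotically Bohr ${\rm I}$-almost periodic of type $1$) to extract a sequence of ${\rm I}$-almost periods ${\bf \tau}_{k}$ escaping to infinity along which the translates $F(\cdot+{\bf \tau}_{k_{l}})$ converge uniformly on $I$; the limit $F^{\ast}$ is taken as the principal part, its Bohr $T$-almost periodicity is verified directly by a three-term estimate using the type-$1$ $T$-almost periodicity of $F$, and $F-F^{\ast}\in C_{0,I}(I:Y)$ follows from the uniform convergence. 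You instead outsource the construction of the principal part to the already established scalar case ($c=1$ of \cite[Theorem 2.27]{nova-mse}, which the paper cites immediately before the statement and whose proof the present proof essentially reproduces), obtaining $F=F_{0}+Q$ with $F_{0}$ Bohr almost periodic, and then perform an upgrade: the type-$1$ $T$-hypothesis together with the absorption property $I_{M}+I\subseteq I_{M}$ gives the tail bound $\sup_{{\bf t}\in I_{M^{*}}}\|F_{0}({\bf t}+\tau)-TF_{0}({\bf t})\|_{Y}\leq\epsilon$, which the supremum formula of Proposition \ref{deb} (applied with $\rho={\rm I}$) globalises to all of $I$. This division of labour is the same as the paper's in spirit (the ${\rm I}$-hypothesis builds the principal part, the $T$-hypothesis supplies the $T$-almost periods), but your version avoids redoing the sequential-compactness argument and cleanly isolates where $T$ enters; the paper's version is self-contained and does not presuppose the $c=1$ case. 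One presentational caveat: your justification that $G({\bf t})=F_{0}({\bf t}+\tau)-TF_{0}({\bf t})$ is Bohr almost periodic ``as a linear combination of two Bohr almost periodic functions'' is too glib in the general multi-dimensional setting, where closure under sums is not automatic; the correct (and easy) reason here is that both summands inherit their almost periods from $F_{0}$ itself, so every $\epsilon$-almost period of $F_{0}$ is a $(1+\|T\|_{L(Y)})\epsilon$-almost period of $G$. With that repair the proof stands.
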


\begin{proof}
The proof of \cite[Theorem 2.27]{nova-mse} contains some minor typographical errors and, because of that, we we will provide all details of the proof for the sake of completeness.
Since we have assumed that the function $F(\cdot)$ is $I$-asymptotically Bohr ${\rm I}$-almost periodic function of type $1,$
using \cite[Theorem 2.34]{marko-manuel-ap}
it follows that for each sequence $({\bf b}_{k})$ in  
$I$ there exist a subsequence 
$({\bf b}_{k_{l}})$ of $({\bf b}_{k})$ and a function $F^{\ast} : I \rightarrow Y$ such that $\lim_{l\rightarrow +\infty}F({\bf t}+{\bf b}_{k_{l}})=F^{\ast}({\bf t}),$ uniformly in ${\bf t}\in I.$ Clearly,
for each integer $k\in {\mathbb N}$  
there exist $l_{k}>0$ and $M_{k}>0$ such that for each ${\bf t}_{0} \in I$ there exists ${\bf \tau} \in B({\bf t}_{0},l) \cap I$ such that
\eqref{emojmarko145ceT} holds with $T={\rm I},$ $\epsilon=1/k$ and ${\mathbb D}=I.$
Let ${\bf \tau}_{k}$ be any fixed element of $I$ such that $|{\bf \tau}_{k}|>M_{k}+k^{2}$ and \eqref{emojmarko145ceT} holds with $T={\rm I},$ $\epsilon=1/k$ and ${\mathbb D}=I$ ($k\in {\mathbb N}$).
Then there exist of a subsequence $({\bf \tau}_{k_{l}})$ of $({\bf \tau}_{k})$ and a function
$F^{\ast} : I \rightarrow Y$
such that
\begin{align}\label{prce-franjo}
\lim_{l\rightarrow +\infty}F({\bf t}+{\bf \tau}_{k_{l}})=F^{\ast}({\bf t}),\mbox{ uniformly for }t\in I.
\end{align} 
The mapping $F^{\ast}(\cdot)$ is clearly continuous and we can prove that $F^{\ast}(\cdot)$ is Bohr $T$-almost periodic as follows. 
Let $\epsilon>0$ be fixed, and let $l>0$ and $M>0$ be such that for each ${\bf t}_{0} \in I$ there exists ${\bf \tau} \in B({\bf t}_{0},l) \cap I$ such that
\eqref{emojmarko145ceT} holds with ${\mathbb D}=I$ and the number $\epsilon$ replaced therein by $\epsilon/(3(1+\| T\|)).$ Let ${\bf t }\in I$ be fixed, and let $l_{0}\in {\mathbb N}$ be such that $|{\bf t}+{\bf \tau}_{k_{l_{0}}}|\geq M$ and $|{\bf t}+{\bf \tau}+{\bf \tau}_{k_{l_{0}}}|\geq M.$ Then 
\begin{align*}
\Bigl\| & F^{\ast}({\bf t}+{\bf \tau})- T F^{\ast}({\bf t})\Bigr\|
\\& \leq \Bigl\|  F^{\ast}({\bf t}+{\bf \tau})- F\bigl({\bf t}+{\bf \tau}+{\bf \tau}_{k_{l_{0}}}\bigr)\Bigr\|+\Bigl\|  F\bigl({\bf t}+{\bf \tau}+{\bf \tau}_{k_{l_{0}}}\bigr)-T F\bigl({\bf t}+{\bf \tau}_{k_{l_{0}}}\bigr)\Bigr\|
\\&+
\Bigl\|  T F\bigl({\bf t}+{\bf \tau}_{k_{l_{0}}}\bigr)- T F^{\ast}({\bf t}) \Bigr\|\leq 2\cdot (\epsilon/3)+\|T\| \cdot \Bigl\|   F\bigl({\bf t}+{\bf \tau}_{k_{l_{0}}}\bigr)- F^{\ast}({\bf t}) \Bigr\|\leq \epsilon,
\end{align*}
which implies the required. The function ${\bf t} \mapsto F({\bf t} )-F^{\ast}({\bf t} ),$ ${\bf t} \in I$ belongs to the space $C_{0,I}(I: Y)$ due to \eqref{prce-franjo} and the fact that $F : I \rightarrow Y$ is an
$I$-asymptotically Bohr ${\rm I}$-almost periodic function of type $1.$ This completes the proof.
\end{proof}

Concerning the extensions of Bohr $(I',\rho)$-almost periodic functions and $(I',\rho)$-uniformly recurrent functions, we may deduce the following result in which we require that $\rho=T\in L(X)$ is a linear isomorphism; the proof is almost the same as the corresponding proof of 
\cite[Theorem 2.28]{nova-mse} (we only need to replace any appearance of the letters $c$ and $c^{-1}$ with the letters $T$ and $T^{-1}$, respectively, in a part of proof where $\arg(c) /\pi \notin {\mathbb Q}$):

\begin{thm}\label{lenny-jassonceT}
Suppose that $\rho=T\in L(Y)$ is a linear isomorphism, $ I' \subseteq {\mathbb R}^{n},$ $ I \subseteq {\mathbb R}^{n},$
$I +I' \subseteq I,$ the set $I'$ is unbounded, $F : I  \rightarrow Y$ is a uniformly continuous, Bohr $(I',T)$-almost periodic function, resp. a uniformly continuous, $(I',T)$-uniformly recurrent function, $S\subseteq {\mathbb R}^{n}$ is bounded and the following condition holds:
\begin{itemize}
\item[(AP-E)]\index{condition!(AP-E)}
For every ${\bf t}'\in {\mathbb R}^{n},$
there exists a finite real number $M>0$ such that
${\bf t}'+I'_{M}\subseteq I.$
\end{itemize}
Then there exists a uniformly continuous, Bohr $(I' \cup S,T)$-almost periodic, resp. a uniformly continuous, $(I' \cup S,T)$-uniformly recurrent, function
$\tilde{F} : {\mathbb R}^{n}  \rightarrow Y$ such that $\tilde{F}({\bf t})=F({\bf t})$ for all ${\bf t}\in I;$ furthermore, in $T$-almost periodic case, the uniqueness of such a function $\tilde{F}(\cdot)$ holds provided that ${\mathbb R}^{n} \setminus (I' \cup S)$ is a bounded set and any Bohr $T$-almost periodic function defined on ${\mathbb R}^{n}$ 
is almost automorphic. 
\end{thm}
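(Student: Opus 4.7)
The plan is to extend $F$ from $I$ to all of ${\mathbb R}^{n}$ by ``pulling back'' $F$ along a large almost-period via $T^{-1}$, following the irrational-argument portion of the proof of \cite[Theorem 2.28]{nova-mse} with $c\rightsquigarrow T$ and $c^{-1}\rightsquigarrow T^{-1}.$ For a fixed ${\bf t}'\in {\mathbb R}^{n},$ condition (AP-E) supplies $M=M({\bf t}')>0$ with ${\bf t}'+I'_{M}\subseteq I.$ In the almost-periodic case, the relative density of $(\varepsilon,T)$-periods in $I'$ combined with the unboundedness of $I'$ allows one to pick a sequence $({\bf \sigma}_{k})\subseteq I'$ with $|{\bf \sigma}_{k}|\to +\infty,$ each ${\bf \sigma}_{k}$ an $(\varepsilon_{k},T)$-period of $F$ with $\varepsilon_{k}\to 0,$ and ${\bf t}'+{\bf \sigma}_{k}\in I;$ in the recurrent case one uses directly the sequence $({\bf \tau}_{k})$ of Definition \ref{nafaks123456789012345}(ii). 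Set
$$
\tilde{F}({\bf t}'):=\lim_{k\rightarrow +\infty}T^{-1}F\bigl({\bf t}'+{\bf \sigma}_{k}\bigr).
$$

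Well-definedness rests on a symmetric double-almost-period estimate. Given two admissible ${\bf \sigma},\ {\bf \sigma}'$ with errors $\varepsilon_{1},\ \varepsilon_{2},$ applying the almost-period property of $F$ once at ${\bf t}'+{\bf \sigma}'\in I$ with shift ${\bf \sigma}$ and once at ${\bf t}'+{\bf \sigma}\in I$ with shift ${\bf \sigma}',$ then subtracting, yields $\|TF({\bf t}'+{\bf \sigma})-TF({\bf t}'+{\bf \sigma}')\|_{Y}\leq \varepsilon_{1}+\varepsilon_{2},$ whence $T^{-1}F({\bf t}'+{\bf \sigma})$ and $T^{-1}F({\bf t}'+{\bf \sigma}')$ differ by a constant multiple (depending only on $\|T^{-1}\|_{L(Y)}$) of $\varepsilon_{1}+\varepsilon_{2}.$ This renders the defining sequence Cauchy, forces independence of the chosen $({\bf \sigma}_{k}),$ and (letting $\varepsilon_{k}\to 0$) gives $\tilde{F}({\bf t}')=F({\bf t}')$ whenever ${\bf t}'\in I.$ Uniform continuity of $\tilde{F}$ follows from that of $F$ by pushing two nearby points through a common large ${\bf \sigma}$ (simultaneously admissible by (AP-E)).

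To verify Bohr $(I'\cup S,T)$-almost periodicity (respectively, $(I'\cup S,T)$-uniform recurrence) of $\tilde{F},$ fix $\varepsilon>0,$ take an $(\varepsilon',T)$-period ${\bf \tau}\in I'$ of $F,$ and for arbitrary ${\bf t}\in {\mathbb R}^{n}$ select a single, much larger almost-period ${\bf \sigma}\in I'$ of error $\varepsilon''$ such that both ${\bf t}+{\bf \sigma}$ and ${\bf t}+{\bf \tau}+{\bf \sigma}$ lie in $I$ (possible by (AP-E) applied to ${\bf t}$ and ${\bf t}+{\bf \tau}$). The triangle-inequality splitting
\begin{align*}
\bigl\|\tilde{F}({\bf t}+{\bf \tau})-T\tilde{F}({\bf t})\bigr\|_{Y}
&\leq \bigl\|\tilde{F}({\bf t}+{\bf \tau})-T^{-1}F({\bf t}+{\bf \tau}+{\bf \sigma})\bigr\|_{Y}\\
&\quad +\bigl\|T^{-1}\bigr\|_{L(Y)}\bigl\|F({\bf t}+{\bf \tau}+{\bf \sigma})-TF({\bf t}+{\bf \sigma})\bigr\|_{Y}\\
&\quad +\bigl\|T\bigr\|_{L(Y)}\bigl\|T^{-1}F({\bf t}+{\bf \sigma})-\tilde{F}({\bf t})\bigr\|_{Y}
\end{align*}
controls each summand by a constant multiple of $\varepsilon'$ or $\varepsilon'',$ so a judicious choice of $\varepsilon',\varepsilon''$ yields the required bound $\leq \varepsilon.$ Passing from $(I',T)$ to $(I'\cup S,T)$ only requires enlarging the relative-density radius by the diameter of $S$: since $S\subseteq B(0,R)$ for some $R>0,$ any ${\bf t}_{0}\in I'\cup S$ lies within distance $l'+R$ of a ball meeting $I',$ where $l'$ is the density radius for $F,$ so one takes $l:=l'+R.$

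For uniqueness in the almost-periodic case, $G:=\tilde{F}_{1}-\tilde{F}_{2}$ is Bohr $(I'\cup S,T)$-almost periodic and vanishes on $I;$ since ${\mathbb R}^{n}\setminus (I'\cup S)$ is bounded, $G$ is actually Bohr $T$-almost periodic on all of ${\mathbb R}^{n}$ and hence almost automorphic by hypothesis. An almost automorphic function whose zero set has bounded complement must vanish identically, so $\tilde{F}_{1}\equiv \tilde{F}_{2}.$ The main technical obstacle is the construction used in the third paragraph: selecting a \emph{single} auxiliary almost-period ${\bf \sigma}$ that simultaneously realizes the limit definition of $\tilde{F}$ at both ${\bf t}$ and ${\bf t}+{\bf \tau},$ and transports the $T$-periodicity of $F$ through $T^{-1}$ with constants controlled by $\|T\|_{L(Y)}$ and $\|T^{-1}\|_{L(Y)}$ --- a step that would fail without the linearity and invertibility of $T.$
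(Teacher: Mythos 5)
Your proposal is correct and follows essentially the same route as the paper, which itself only sketches the argument by reducing to the proof of \cite[Theorem 2.28]{nova-mse} with $c$ and $c^{-1}$ replaced by $T$ and $T^{-1}$: defining $\tilde{F}({\bf t}'):=\lim_{k}T^{-1}F({\bf t}'+{\bf \sigma}_{k})$ via (AP-E), proving well-definedness by the symmetric double-almost-period estimate, and transporting almost periods through $T^{-1}$ with constants controlled by $\|T\|_{L(Y)}$ and $\|T^{-1}\|_{L(Y)}$. Only two cosmetic adjustments are needed: the enlarged density radius for $I'\cup S$ should be $l'+R+\operatorname{dist}(0,I')$ rather than $l'+R$ (since $I'$ need not approach the origin), and in the uniqueness step it is cleaner to note that each $\tilde{F}_{i}$ is almost automorphic and that ${\mathbb R}^{n}\setminus I$ is bounded (which follows from $I+I'\subseteq I$ and the boundedness of ${\mathbb R}^{n}\setminus(I'\cup S)$) before concluding $\tilde{F}_{1}\equiv\tilde{F}_{2}$.
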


The general requirements of Theorem \ref{lenny-jassonceT} hold provided that $({\bf v}_{1},\cdot \cdot \cdot ,{\bf v}_{n})$ is a basis of ${\mathbb R}^{n}$ and
$$
I'=I=\bigl\{ \alpha_{1} {\bf v}_{1} +\cdot \cdot \cdot +\alpha_{n}{\bf v}_{n}  : \alpha_{i} \geq 0\mbox{ for all }i\in {\mathbb N}_{n} \bigr\}
$$ 
is a convex polyhedral in ${\mathbb R}^{n}.$ But, in this case, we do not have that ${\mathbb R}^{n} \setminus (I' \cup S)$ is a bounded set and the question of uniqueness of uniformly continuous, Bohr $(I',T)$-almost periodic extensions of function $F(\cdot)$ to ${\mathbb R}^{n}$ naturally appears. 

Keeping in mind Corollary \ref{rtanj1} and Theorem \ref{lenny-jassonceT}, we can immediately clarify the following result:

\begin{cor}\label{why}
Suppose that $\rho=T\in L(Y)$ is a linear isomorphism, $ I' \subseteq {\mathbb R}^{n},$ $ [0,\infty)^{n} \subseteq I \subseteq {\mathbb R}^{n},$  
$I'- I' ={\mathbb R}^{n},$ $I +I' \subseteq I,$
$F : I  \rightarrow Y$ is a uniformly continuous, Bohr $(I',T)$-almost periodic function, and condition \emph{(AP-E)} holds.
Then the mean value
${\mathcal M}_{\lambda}(F),$ given by the second equality in \eqref{idiote}, exists
for any $\lambda \in {\mathbb R}^{n}$, and the set of all points $\lambda \in {\mathbb R}^{n}$ for which ${\mathcal M}_{\lambda}(F)\neq 0$ is at most countable.
\end{cor}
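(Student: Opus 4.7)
The plan is to reduce the statement to \emph{Corollary \ref{rtanj1}} via the extension machinery furnished by \emph{Theorem \ref{lenny-jassonceT}}. All hypotheses of the latter theorem are in force: $F$ is uniformly continuous and Bohr $(I',T)$-almost periodic, the set $I'$ is unbounded (since $I'-I'={\mathbb R}^{n}$ forces it to be so), $T\in L(Y)$ is a linear isomorphism, the inclusion $I+I'\subseteq I$ is assumed, and condition (AP-E) holds by hypothesis. Choosing any bounded set $S\subseteq {\mathbb R}^{n}$ (for instance $S=\emptyset$), Theorem \ref{lenny-jassonceT} produces a uniformly continuous, Bohr $(I'\cup S,T)$-almost periodic function $\tilde{F}:{\mathbb R}^{n}\to Y$ with $\tilde{F}({\bf t})=F({\bf t})$ for every ${\bf t}\in I$.

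Next, I would verify the hypotheses of Corollary \ref{rtanj1} for $\tilde{F}$ with $\rho=T$. Regarded as a binary relation on $Y$, the linear operator $T$ satisfies $D(T)=Y$, so $R(\tilde{F})\subseteq D(T)$ trivially, and $T(y)=\{Ty\}$ is a singleton for every $y\in Y$. Moreover, $(I'\cup S)-(I'\cup S)\supseteq I'-I'={\mathbb R}^{n}$, so the group-difference condition of Corollary \ref{rtanj1} is satisfied for the ``new'' $I'$ taken to be $I'\cup S$. Corollary \ref{rtanj1} then yields, for every $\lambda\in{\mathbb R}^{n}$, the existence of
$$
{\mathcal M}_{\lambda}(\tilde{F})=\lim_{T\to +\infty}\frac{1}{T^{n}}\int_{L_{T}}e^{-i\langle\lambda,{\bf t}\rangle}\tilde{F}({\bf t})\,d{\bf t},
$$
together with the at most countable cardinality of the set $\{\lambda\in{\mathbb R}^{n}:{\mathcal M}_{\lambda}(\tilde{F})\neq 0\}$.

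Finally, I would transfer these conclusions back to $F$. The key observation is that $L_{T}\subseteq [0,\infty)^{n}\subseteq I$ for every $T>0$, and on $L_{T}$ the extension $\tilde{F}$ coincides with $F$. Therefore
$$
\frac{1}{T^{n}}\int_{L_{T}}e^{-i\langle\lambda,{\bf t}\rangle}F({\bf t})\,d{\bf t}=\frac{1}{T^{n}}\int_{L_{T}}e^{-i\langle\lambda,{\bf t}\rangle}\tilde{F}({\bf t})\,d{\bf t}
$$
for every $T>0$, so the limit as $T\to+\infty$ on the left exists and equals ${\mathcal M}_{\lambda}(\tilde{F})$. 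This identifies ${\mathcal M}_{\lambda}(F)$ with ${\mathcal M}_{\lambda}(\tilde{F})$ and, in particular, transfers the countability of the nontrivial spectrum.

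I do not foresee any serious obstacle here: the entire substantive content is already packaged into Theorem \ref{lenny-jassonceT} and Corollary \ref{rtanj1}, and Corollary \ref{why} is the natural corollary of combining them. The only mild bookkeeping point is verifying that the binary relation $\rho=T$ qualifies for Corollary \ref{rtanj1} (singleton values, full domain), and that the enlarged parameter set $I'\cup S$ still satisfies the group-difference condition $(I'\cup S)-(I'\cup S)={\mathbb R}^{n}$, both of which are immediate.
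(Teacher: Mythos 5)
Your proposal is correct and follows exactly the route the paper intends: the paper states that Corollary \ref{why} is obtained by "keeping in mind Corollary \ref{rtanj1} and Theorem \ref{lenny-jassonceT}," i.e., extend $F$ to a Bohr $(I',T)$-almost periodic function $\tilde{F}$ on ${\mathbb R}^{n}$ and then apply the mean-value result to $\tilde{F}$, transferring back via $L_{T}\subseteq [0,\infty)^{n}\subseteq I$. Your bookkeeping (unboundedness of $I'$ from $I'-I'={\mathbb R}^{n}$, the singleton/domain conditions for $\rho=T$, and restricting to the $L_{T}$-form of the mean value) fills in precisely the details the paper leaves implicit.
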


It is clear that the requirements of Corollary \ref{why} holds provided that $I'=I=[0,\infty)^{n}.$ Concerning the existence of mean value, we would like to propose the following problem at the end of this section:\vspace{0.1cm}

{\sc Problem.} Suppose that $\rho=T\in L(Y)$ is not a linear isomorphism, $ I' \subseteq {\mathbb R}^{n},$ $ [0,\infty)^{n} \subseteq I \subseteq {\mathbb R}^{n},$  
$I'- I' ={\mathbb R}^{n},$ $I +I' \subseteq I,$ and condition (AP-E) holds. Does there exists a (uniformly continuous) Bohr $(I',T)$-almost periodic function
$F : I  \rightarrow Y$ such that the mean value
${\mathcal M}_{\lambda}(F),$ given by the second equality in \eqref{idiote}, does not exist for some
$\lambda \in {\mathbb R}^{n}?$\vspace{0.1cm}

Observe only that the function $F : [0,\infty) \rightarrow {\mathbb C}^{2},$ constructed in Example \ref{tomqqa}, is asymptotically almost periodic and therefore the mean value ${\mathcal M}_{\lambda}(F)$ exists for any $\lambda \in {\mathbb R}^{n}.$

\section{$({\bf \omega},\rho)$-Periodic functions and $({\bf \omega}_{j},\rho_{j})_{j\in {\mathbb N}_{n}}$-periodic functions}\label{krizni-stab}

In \cite{nds-2021}, we have recently generalized the notion of Bloch $({\bf p},{\bf k})$-periodicity by proposing a new definition of $({\bf \omega},c)$-periodicity, where ${\bf \omega}\in {\mathbb R}^{n} \setminus \{0\}$ and $c\in {\mathbb C} \setminus \{0\}.$ The notion of $({\bf \omega},c)$-periodicity is a special case of the notion $({\bf \omega},\rho)$-periodicity introduced as follows:

\begin{defn}\label{drasko-presing}
Let ${\bf \omega}\in {\mathbb R}^{n} \setminus \{0\},$ $\rho$ be a binary relation on $X$ 
and 
${\bf \omega}+I \subseteq I$. A continuous
function $F:I\rightarrow X$ is said to be $({\bf \omega},\rho)$-periodic if and only if 
$
F({\bf t}+{\bf \omega})\in \rho(F({\bf t})),$ ${\bf t}\in I.
$ 
\end{defn}

\begin{example}\label{visevr}
Let $C\in L(X)$, and let $x\in X.$ A strongly continuous family $(T(t))_{t\geq 0}$ in $L(X)$
is said to be a $C$-regularized semigroup
if and only if
$T(t+s)C=T(t)T(s)$ for all $t$, $s \geq 0$ and $T(0)=C.$ 
Put $u(t):=T(t)x,$ $t\geq 0.$ Then the function $u(\cdot)$ is a mild solution of the abstract Cauchy inclusion $u^{\prime}(t)\in {\mathcal A}u(t),$ $t\geq 0;$ $u(0)=Cx,$ where a closed MLO ${\mathcal A}$ is the integral generator of $(T(t))_{t\geq 0};$ see \cite[Section 3.1-Section 3.4]{FKP} for more details on the subject.
It is clear that $u(\cdot)$ is $(\omega,C^{-1}T(\omega))$-periodic as well as that the binary relation $\rho_{\omega}:=C^{-1}T(\omega)$ is not single-valued in general ($\omega>0$).
\end{example}

We have the following simple result: 

\begin{prop}\label{zasu}
Suppose that  ${\bf \omega}\in {\mathbb R}^{n} \setminus \{0\},$ $\rho$ is a binary relation on $X,$  
${\bf \omega}+I \subseteq I$ and the function
$F:I\rightarrow X$ is $({\bf \omega},\rho)$-periodic. If $A$ and $B$ are syndetic subsets of ${\mathbb N},$ we set
$\sigma:=\bigcup_{m\in A}\rho^{m}$ and $I':=\{m\omega : m\in B\}.$ 
Then the function $F(\cdot)$ is Bohr $(I',\sigma)$-almost periodic.
\end{prop}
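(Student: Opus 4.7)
The plan is a direct verification of Bohr $(I',\sigma)$-almost periodicity in the sense of Definition \ref{nafaks123456789012345}(i) (with $X=\{0\}$), taking the witness $y_{{\bf t}}$ to simply be the translate $F({\bf t}+{\bf \tau})$. The key preliminary step is iteration of the $({\bf \omega},\rho)$-periodicity: by induction on $m\in {\mathbb N}$ one obtains $F({\bf t}+m{\bf \omega})\in \rho^{m}(F({\bf t}))$ for every ${\bf t}\in I$. The base case $m=1$ is the hypothesis, and the inductive step uses ${\bf \omega}+I\subseteq I$ to evaluate at ${\bf t}+m{\bf \omega}\in I$ together with the composition rule: $F({\bf t}+(m+1){\bf \omega})\in \rho(F({\bf t}+m{\bf \omega}))\subseteq \rho(\rho^{m}(F({\bf t})))=\rho^{m+1}(F({\bf t}))$. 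The same invariance gives $I+I'\subseteq I$ since every element of $I'$ has the form $m{\bf \omega}$ with $m\in B\subseteq {\mathbb N}$, so Definition \ref{nafaks123456789012345} is applicable.

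Next I would fix $\varepsilon>0$ and produce the required relatively dense set of $\varepsilon$-periods. Writing $A=\{a_{n}\}$ and $B=\{b_{n}\}$ as strictly increasing enumerations with $L:=\max(\sup_{n}(a_{n+1}-a_{n}),\sup_{n}(b_{n+1}-b_{n}))$ finite by syndeticity, I would choose $l>0$ depending on $L$ and $|{\bf \omega}|$ large enough to guarantee, for every ${\bf t}_{0}\in I'$, the existence of some ${\bf \tau}=m{\bf \omega}\in B({\bf t}_{0},l)\cap I'$ with $m\in A\cap B$. For every ${\bf t}\in I$, set $y_{{\bf t}}:=F({\bf t}+{\bf \tau})$; the iteration step places $y_{{\bf t}}$ in $\rho^{m}(F({\bf t}))$, and the membership $m\in A$ embeds this into $\sigma(F({\bf t}))=\bigcup_{k\in A}\rho^{k}(F({\bf t}))$. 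The defect $\|F({\bf t}+{\bf \tau})-y_{{\bf t}}\|_{X}=0$ satisfies \eqref{oblak} trivially.

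The principal obstacle sits in the second paragraph: extracting $m\in A\cap B$ within bounded distance of an arbitrary $b_{n_{0}}\in B$ is not automatic from the bounded-gap hypothesis alone, as the example $A=2{\mathbb N}$, $B=2{\mathbb N}-1$ shows. I therefore expect the proposition to rely implicitly on some compatibility between $A$ and $B$ — the cleanest being $A=B$, in which case the choice ${\bf \tau}:={\bf t}_{0}=b_{n_{0}}{\bf \omega}$ works directly (the iteration lemma yields $F({\bf t}+{\bf t}_{0})\in \rho^{b_{n_{0}}}(F({\bf t}))\subseteq \sigma(F({\bf t}))$). Absent such compatibility, the statement should be read as implicitly requiring $A\cap B$ itself to be syndetic, which holds in the natural special cases $A=B$ or $B\subseteq A$.
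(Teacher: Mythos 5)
Your first two paragraphs reproduce, with full details, exactly what the paper's proof contains: the paper's entire argument consists of the inductive observations $I+I'\subseteq I$ and $F({\bf t}+m{\bf \omega})\in\rho^{m}(F({\bf t}))$ for ${\bf t}\in I$, $m\in{\mathbb N}$, after which it declares that the conclusion ``immediately follows from the corresponding definition of Bohr $(I',\sigma)$-almost periodicity, the definition of a syndetic set and the definition of binary relation $\sigma$.'' Up to that point you and the paper are doing the same thing, and your choice $y_{{\bf t}}:=F({\bf t}+{\bf \tau})$ with zero defect is clearly the intended one.

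The obstacle you isolate in your third paragraph is genuine, and the paper does not address it. To place $F({\bf t}+{\bf \tau})$ inside $\sigma(F({\bf t}))=\bigcup_{k\in A}\rho^{k}(F({\bf t}))$ with ${\bf \tau}=m{\bf \omega}\in I'$ one needs $m\in A\cap B$, and the syndeticity of $A$ and $B$ separately does not put an element of $A\cap B$ within bounded distance of every point of $B$. Your $A=2{\mathbb N}$, $B=2{\mathbb N}-1$ example can in fact be upgraded to an outright counterexample to the proposition as stated: take $X={\mathbb C}$, $I={\mathbb R}$, $\omega=1$, $\rho:=\{(z,-z):z\in{\mathbb C}\}$ and $F(t):=e^{i\pi t}$, which is $(1,\rho)$-periodic. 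Then $\rho^{m}=\Delta_{X}$ for even $m$, so $\sigma=\Delta_{X}$, while $I'$ consists of the odd positive integers and $F(t+\tau)=-F(t)$ for every $\tau\in I'$; hence $\|F(t+\tau)-y_{t}\|=2$ for the unique $y_{t}\in\sigma(F(t))$, and $F$ is not Bohr $(I',\sigma)$-almost periodic. The statement therefore needs an additional compatibility hypothesis; your proposed repair (read it with $A=B$, or more generally require every point of $B$ to lie within bounded distance of $A\cap B$) is the right one, and in the case $B\subseteq A$ the proof degenerates pleasantly, since one may take ${\bf \tau}:={\bf t}_{0}$ itself and the syndeticity assumptions become superfluous. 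In short, your analysis is more careful than the paper's own proof, and the gap you found lies in the paper, not in your argument.
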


\begin{proof}
Since ${\bf \omega}+I \subseteq I$, we inductively get $I+I'\subseteq I.$ Since
$
F({\bf t}+m{\bf \omega})\in \rho^{m}(F({\bf t})),$ ${\bf t}\in I,
$ $m\in {\mathbb N},$ the result immediately follows from the corresponding definition of  Bohr $(I',\sigma)$-almost periodicity, the definition of a syndetic set and the definition of binary relation $\sigma.$ 
\end{proof}

If we additionally assume that $k \omega +I\subseteq I$ for all positive real numbers $k>0,$ then the above statement continues to hold with the set $I':=[0,\infty) \cdot \omega$. In connection with the statement of Proposition \ref{zasu}, it is worth mentioning
case in which we have the existence of a positive integer $m\in {\mathbb N}$ such that
$\rho^{m}\subseteq \rho,$ which implies that $\rho^{km}\subseteq \rho$ 
for all $k\in {\mathbb N}.$ Then we can simply show that the preassumptions of Proposition \ref{zasu} imply that the function $F(\cdot;\cdot)$ is Bohr $(I',\rho)$-almost periodic (with the set $I':=[0,\infty) \cdot \omega$, provided that $k \omega +I\subseteq I$ for all positive real numbers $k>0$). It is worth noting that the choice of binary relation $\sigma$ is important here because, if $c\in {\mathbb C} \setminus \{0\}$ and $|c|\neq 1,$ then the $(\omega,c)$-periodicity of a non-zero function $F(\cdot)$ cannot imply its $c$-almost periodicity due to \cite[Proposition 2.7]{c1} ($\rho=c{\rm I}$).

The following definition is also meaningful:

\begin{defn}\label{drasko-presing1}
Let ${\bf \omega}_{j}\in {\mathbb R} \setminus \{0\},$ $\rho_{j}\in {\mathbb C} \setminus \{0\}$ is a binary relation on $X$
and 
${\bf \omega}_{j}e_{j}+I \subseteq I$ ($1\leq j\leq n$). A continuous
function $F:I\rightarrow X$ is said to be $({\bf \omega}_{j},\rho_{j})_{j\in {\mathbb N}_{n}}$-periodic if and only if 
$
F({\bf t}+{\bf \omega}_{j}e_{j})\in \rho_{j}(F({\bf t})),$ ${\bf t}\in I,
$ $j\in {\mathbb N}_{n}.$ 
\end{defn} \index{function!$({\bf \omega}_{j},\rho_{j})_{j\in {\mathbb N}_{n}}$-periodic}

In the case that $\rho_{j}=c_{j}{\rm I}$ for some non-zero complex numbers $c_{j}$ ($1\leq j\leq n$), then we also say that the function $F(\cdot)$ is 
$({\bf \omega}_{j},c_{j})_{j\in {\mathbb N}_{n}}$-periodic; furthermore, if $c_{j}=1$  for all $j\in {\mathbb N}_{n},$ then we say that $F(\cdot)$ is $({\bf \omega}_{j})_{j\in {\mathbb N}_{n}}$-periodic.

It is clear that, if $F:I\rightarrow X$ is $({\bf \omega}_{j},\rho_{j})_{j\in {\mathbb N}_{n}}$-periodic, then $
F({\bf t}+m{\bf \omega}_{j}e_{j})\in \rho_{j}^{m}(F({\bf t})),$ ${\bf t}\in I,
$ $m\in {\mathbb N},$ $j\in {\mathbb N}_{n}.$ This enables one to transfer the statement of Proposition \ref{zasu} and conclusions given in the paragraph following this proposition to $({\bf \omega}_{j},\rho_{j})_{j\in {\mathbb N}_{n}}$-periodic functions; details can be omitted.

The interested reader may try to reformulate the statement of Theorem \ref{lojalni} for $({\bf \omega},\rho)$-periodic functions and $({\bf \omega}_{j},\rho_{j})_{j\in {\mathbb N}_{n}}$-periodic functions. Further on, in the scalar-valued case, the following holds: If the function $F : I\rightarrow {\mathbb C} \setminus \{0\}$ is $({\bf \omega},\rho)$-periodic, resp. $({\bf \omega}_{j},\rho_{j})_{j\in {\mathbb N}_{n}}$-periodic, then the function $(1/F)(\cdot)$ is $({\bf \omega},\sigma)$-periodic, resp. $({\bf \omega}_{j},\sigma_{j})_{j\in {\mathbb N}_{n}}$-periodic, provided that for every non-zero complex numbers $x$ and $y,$ we have that the assumption $(x,y)\in \rho$ implies $(1/x,1/y)\in \sigma,$ resp. the assumption $(x,y)\in \rho_{j}$ implies $(1/x,1/y)\in \sigma_{j}$ for all $j\in {\mathbb N}_{n}.$ This can be also reworded for Bohr $({\mathcal B},I',\rho)$-almost periodic functions and $({\mathcal B},I',\rho)$-uniformly recurrent functions; details can be left to the interested readers.

It is straightforward to deduce the following extension of \cite[Proposition 2.5]{nds-2021}:

\begin{prop}\label{stanivuko}
Let ${\bf \omega}_{j}\in {\mathbb R} \setminus \{0\},$ $\rho_{j}$ is a binary relation on $X$
and 
${\bf \omega}_{j}e_{j}+I \subseteq I$ ($1\leq j\leq n$). If a continuous
function $F:I\rightarrow X$ is $({\bf \omega}_{j},\rho_{j})_{j\in {\mathbb N}_{n}}$-periodic and $\sigma : {\mathbb N}_{n} \rightarrow {\mathbb N}_{n}$ is a permutation, then
$\omega +I \subseteq I,$ where $\omega:=\sum_{j=1}^{n}\omega_{j}e_{j},$ and
the function $F(\cdot)$ is  $({\bf \omega},\rho)$-periodic with $\rho=:\prod_{j=1}^{n}\rho_{\sigma(j)}.$
\end{prop}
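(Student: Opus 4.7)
My plan is to first verify the containment $\omega + I \subseteq I$, and then to establish the $(\omega,\rho)$-periodicity of $F(\cdot)$ by iteratively applying the single-step relations $\rho_{\sigma(j)}$ in the order dictated by the permutation $\sigma$.

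For the containment, I would proceed by a finite induction on $k \in \{0,1,\dots,n\}$, showing that $\sum_{j=1}^{k}\omega_{\sigma(j)} e_{\sigma(j)} + I \subseteq I$. (The particular order does not matter, but it is convenient to use the one prescribed by $\sigma$ for later compatibility.) The case $k=0$ is immediate, and the step from $k$ to $k+1$ follows by writing
\[
\sum_{j=1}^{k+1}\omega_{\sigma(j)} e_{\sigma(j)} + I \;=\; \omega_{\sigma(k+1)}e_{\sigma(k+1)} + \Bigl(\sum_{j=1}^{k}\omega_{\sigma(j)} e_{\sigma(j)} + I\Bigr) \;\subseteq\; \omega_{\sigma(k+1)}e_{\sigma(k+1)} + I \;\subseteq\; I,
\]
using the inductive hypothesis together with the assumption $\omega_{\sigma(k+1)} e_{\sigma(k+1)} + I \subseteq I$. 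Taking $k=n$ and noting that the sum equals $\omega$ irrespective of the permutation gives the first claim.

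For the $(\omega,\rho)$-periodicity, fix $\mathbf{t}\in I$ and define the partial sums $s_{n+1}:=0$ and $s_{k}:=\sum_{j=k}^{n}\omega_{\sigma(j)} e_{\sigma(j)}$ for $1\le k \le n$, so that $s_{1}=\omega$. From the argument just given (applied to truncated sums and to $\mathbf{t}$ in place of the origin), each point $\mathbf{t}+s_{k}$ belongs to $I$. I would then prove by downward induction on $k$ the claim
\[
F(\mathbf{t}+s_{k}) \in \bigl(\rho_{\sigma(k)}\circ \rho_{\sigma(k+1)}\circ \cdots \circ \rho_{\sigma(n)}\bigr)\bigl(F(\mathbf{t})\bigr).
\]
The base case $k=n+1$ is trivial. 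For the inductive step, the definition of $(\omega_{j},\rho_{j})_{j\in\mathbb{N}_{n}}$-periodicity applied at the point $\mathbf{t}+s_{k+1}\in I$ yields $F(\mathbf{t}+s_{k}) = F((\mathbf{t}+s_{k+1})+\omega_{\sigma(k)} e_{\sigma(k)}) \in \rho_{\sigma(k)}(F(\mathbf{t}+s_{k+1}))$, which combined with the inductive hypothesis and the monotonicity of $\rho_{\sigma(k)}$ with respect to set inclusion gives the claim for $k$. Taking $k=1$ produces exactly $F(\mathbf{t}+\omega)\in \rho(F(\mathbf{t}))$ as required.

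The only subtle point I anticipate is notational: one must interpret $\prod_{j=1}^{n}\rho_{\sigma(j)}$ as the composition $\rho_{\sigma(1)}\circ\cdots\circ\rho_{\sigma(n)}$ in the sense defined in the preliminaries, and keep straight that ``$\sigma\circ\rho$'' there means \emph{first $\rho$, then $\sigma$}. Everything else is bookkeeping: the argument is essentially the observation that shifts by $\omega_{j}e_{j}$ in $\mathbb{R}^{n}$ commute, so the sum $\omega$ may be decomposed in any order, with the corresponding composition of relations simply recording the order chosen.
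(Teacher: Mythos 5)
Your argument is correct and is precisely the routine verification the paper has in mind: the paper states this proposition without proof (introducing it as ``straightforward to deduce''), and your two inductions—one for $\omega+I\subseteq I$, one peeling off the shifts $\omega_{\sigma(j)}e_{\sigma(j)}$ and composing the relations in the matching order—supply exactly the intended bookkeeping, including the correct reading of $\prod_{j=1}^{n}\rho_{\sigma(j)}$ against the paper's convention for $\sigma\circ\rho$. Nothing further is needed.
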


We continue with the following illustrative example:

\begin{example}\label{deo-javnosti}
As we have shown in \cite[Example 2.2, Example 2.6]{nds-2021}, 
there exists a continuous, unbounded function $F: {\mathbb R}^{n} \rightarrow {\mathbb R}$ which satisfies that $F({\bf t}+(1,1,\cdot \cdot \cdot,1))=F({\bf t})$ for all ${\bf t}\in {\mathbb R}^{n}$ as well as that there do not exist numbers 
$\omega_{1},\ \omega_{2} \in {\mathbb R} \setminus \{0\}$ and numbers $c_{1}, \ c_{2}\in {\mathbb C} \setminus \{0\}$ such that the function $F(\cdot)$ is $({\bf \omega}_{j},c_{j})_{j\in {\mathbb N}_{2}}$-periodic.
The analysis carried out in these examples shows that there do not exist numbers 
$\omega_{1},\ \omega_{2} \in {\mathbb R} \setminus \{0\},$ a number $c_{1}=\rho_{1}\in {\mathbb C} \setminus \{0\}$ and a binary relation $\rho_{2}$ on ${\mathbb C}$ such that the function $F(\cdot)$ is $({\bf \omega}_{j},\rho_{j})_{j\in {\mathbb N}_{2}}$-periodic.
\end{example}

The reader can simply formulate certain statements concerning
the pointwise multiplication of scalar-valued $({\bf \omega},\rho)$-periodic functions ($({\bf \omega}_{j},\rho_{j})_{j\in {\mathbb N}_{n}}$-periodic functions) and the vector-valued $({\bf \omega},\rho)$-periodic functions ($({\bf \omega}_{j},\rho_{j})_{j\in {\mathbb N}_{n}}$-periodic functions).
Concerning the convolution invariance of spaces consisting of $({\bf \omega},\rho)$-periodic functions ($({\bf \omega}_{j},\rho_{j})_{j\in {\mathbb N}_{n}}$-periodic functions), we will state and prove the following result:

\begin{prop}\label{debilpropo}
Suppose that ${\bf \omega}\in {\mathbb R}^{n} \setminus \{0\},$ $\rho={\mathcal A}$ is a closed \emph{MLO} on $X,$
and $\rho_{j}={\mathcal A}_{j}$ is a closed \emph{MLO} on $X$ ($1\leq j\leq n$). 
Suppose, further, that  $F : {\mathbb R}^{n} \rightarrow X$ is $({\bf \omega},{\mathcal A})$-periodic ($({\bf \omega}_{j},{\mathcal A}_{j})_{j\in {\mathbb N}_{n}}$-periodic) and bounded. If $h\in L^{1}({\mathbb R}^{n}),$ then the function
$$
(h\ast F)({\bf t}):=\int_{{\mathbb R}^{n}}h(y)F({\bf t}-y)\, dy,\quad {\bf t}\in {\mathbb R}^{n}
$$
is $({\bf \omega},{\mathcal A})$-periodic ($({\bf \omega}_{j},{\mathcal A}_{j})_{j\in {\mathbb N}_{n}}$-periodic) and bounded.
\end{prop}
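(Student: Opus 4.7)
The plan is to verify each required property of $h\ast F$ in turn; the core analytic content is that a closed multivalued linear operator passes through a Bochner integral via Lemma \ref{stana}. I will treat the $({\bf \omega},{\mathcal A})$-periodic case, since the $({\bf \omega}_{j},{\mathcal A}_{j})_{j\in {\mathbb N}_{n}}$-periodic case is obtained by applying the same reasoning coordinatewise for each $j\in {\mathbb N}_{n}$.

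First, boundedness and continuity of $h\ast F$ are routine. Since $F$ is bounded and $h\in L^{1}({\mathbb R}^{n})$, we get
\begin{align*}
\bigl\|(h\ast F)({\bf t})\bigr\| \leq \|h\|_{L^{1}({\mathbb R}^{n})} \cdot \sup_{{\bf s}\in {\mathbb R}^{n}}\|F({\bf s})\|,\quad {\bf t}\in {\mathbb R}^{n},
\end{align*}
so $h\ast F$ is bounded. Continuity of $h\ast F$ follows from the dominated convergence theorem applied to the integrand $y\mapsto h(y)F({\bf t}-y)$ with the $L^{1}$ dominating function $|h(\cdot)|\cdot \sup_{\mathbb{R}^{n}}\|F\|$; this is standard and I would state it without detail.

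The substantive step is the $({\bf \omega},{\mathcal A})$-periodicity of $h\ast F$. Fix ${\bf t}\in {\mathbb R}^{n}$. Because $F(\cdot)$ is $({\bf \omega},{\mathcal A})$-periodic, for every $y\in {\mathbb R}^{n}$ we have $F({\bf t}+{\bf \omega}-y)\in {\mathcal A}(F({\bf t}-y))$. Multiplying by the scalar $h(y)$ and using property (ii) in the definition of an MLO (scalar multiplication: $\lambda {\mathcal A}x \subseteq {\mathcal A}(\lambda x)$), we obtain
\begin{align*}
h(y)F({\bf t}+{\bf \omega}-y)\in h(y){\mathcal A}(F({\bf t}-y))\subseteq {\mathcal A}\bigl(h(y)F({\bf t}-y)\bigr),\quad y\in {\mathbb R}^{n}.
\end{align*}
Both maps $y\mapsto h(y)F({\bf t}-y)$ and $y\mapsto h(y)F({\bf t}+{\bf \omega}-y)$ are Bochner integrable on ${\mathbb R}^{n}$ (again by boundedness of $F$ and $h\in L^{1}$). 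Hence Lemma \ref{stana}, applied with the closed MLO ${\mathcal A}$, the locally compact separable metric space ${\mathbb R}^{n}$ and Lebesgue measure, yields
\begin{align*}
(h\ast F)({\bf t}+{\bf \omega})=\int_{{\mathbb R}^{n}}h(y)F({\bf t}+{\bf \omega}-y)\, dy \in {\mathcal A}\Bigl(\int_{{\mathbb R}^{n}}h(y)F({\bf t}-y)\, dy\Bigr)={\mathcal A}\bigl((h\ast F)({\bf t})\bigr).
\end{align*}
Since ${\bf t}$ was arbitrary, $h\ast F$ is $({\bf \omega},{\mathcal A})$-periodic.

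The only subtle point—and the one I expect to be the main obstacle—is the legitimacy of pulling the scalar $h(y)$ inside ${\mathcal A}$ on the pointwise level and then transferring the resulting pointwise inclusion through the integral. The first part uses only the MLO axioms; the second part is exactly the content of Lemma \ref{stana} and requires measurability plus integrability of both $y\mapsto h(y)F({\bf t}-y)$ and $y\mapsto h(y)F({\bf t}+{\bf \omega}-y)$, which we have ensured via boundedness of $F$. For the $({\bf \omega}_{j},{\mathcal A}_{j})_{j\in {\mathbb N}_{n}}$-periodic case, the identical argument applies to each pair $({\bf \omega}_{j},{\mathcal A}_{j})$ separately, giving $(h\ast F)({\bf t}+{\bf \omega}_{j}e_{j})\in {\mathcal A}_{j}((h\ast F)({\bf t}))$ for every $j\in {\mathbb N}_{n}$ and ${\bf t}\in {\mathbb R}^{n}$, which finishes the proof.
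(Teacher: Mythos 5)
Your argument is correct and coincides with the paper's own proof: boundedness and continuity via the dominated convergence theorem, the pointwise inclusion $h(y)F({\bf t}+{\bf \omega}-y)\in {\mathcal A}(h(y)F({\bf t}-y))$ from the MLO axioms, and then Lemma \ref{stana} to pass the closed MLO through the Bochner integral. The only difference is that you spell out the scalar-multiplication step and the integrability hypotheses of Lemma \ref{stana} more explicitly, which the paper leaves implicit.
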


\begin{proof}
Keeping in mind the dominated convergence theorem, the boundedness and the continuity of function $F(\cdot),$ we easily get that the function $
(h\ast F)(\cdot)$ is well defined, bounded and continuous. Suppose that the function $F(\cdot)$ is
$({\bf \omega},{\mathcal A})$-periodic. Then we have $F({\bf t}+\omega-{\bf s})\in {\mathcal A}(F({\bf t}-{\bf s}))$ and therefore
$h({\bf s})F({\bf t}+\omega-{\bf s})\in {\mathcal A}(h({\bf s})F({\bf t}-{\bf s}))$ (${\bf t},\ {\bf s}\in {\mathbb R}^{n}$). Applying Lemma \ref{stana}, we get $(h\ast F)({\bf t}+\omega) \in {\mathcal A}((h\ast F)({\bf t}))$ for all ${\bf t}\in {\mathbb R}^{n},$ as required. The proof is same for 
$({\bf \omega}_{j},{\mathcal A}_{j})_{j\in {\mathbb N}_{n}}$-periodic functions.
\end{proof}

We leave to the interested readers problem of transferring the statements of Theorem \ref{mkmk} and Theorem \ref{episkop-jovan} for $({\bf \omega},\rho)$-periodic functions and $({\bf \omega}_{j},\rho_{j})_{j\in {\mathbb N}_{n}}$-periodic functions.
For simplicity, we will not consider here ${\mathbb D}$-asymptotically $({\bf \omega},\rho)$-periodic functions and ${\mathbb D}$-asymptotically $({\bf \omega}_{j},\rho_{j})_{j\in {\mathbb N}_{n}}$-periodic functions, as well.

\section{Applications}\label{castel}

In this section, we will present several important examples and applications of our results to the abstract Volterra integro-differential equations in Banach spaces. 

\subsection{Liouville type results for anti-periodic linear
operators}\label{ljuvil}

Our first applications are closely connected with the investigation of L. Rossi \cite{luca-rossi}, which concerns
Liouville type results for periodic and almost periodic linear
operators. The author has investigated bounded solutions of the partial differential equation
\begin{align}\label{nadrini}
Pu:={\partial}_{t}u-a_{ij}(x,t){\partial_{ij}}u-b_{i}(x,t){\partial _{i}}u-c(x,t)u=f(x,t),\quad x\in {\mathbb R}^{n},\ t\in {\mathbb R};
\end{align}
all considered coefficients $a_{ij}(x,t),$ $b_{i}(x,t)$, $c(x,t)$ are assumed to be real-valued and satisfy the general assumptions stated on \cite[p. 2482]{luca-rossi}. 

If the coefficients $a_{ij}(x,t),$ $b_{i}(x,t)$, $c(x,t)$
do not depend on $t$, then $P$ can be written as $Pu={\partial}_{t}u-Lu$, where $L$ is a linear differential operator in ${\mathbb R}^{n}.$ Furthermore, if the coefficients 
$a_{ij}(x),$ $b_{i}(x)$, $c(x)$ are $(\omega_{j})_{j\in {\mathbb N}_{n}}$-periodic (${\bf \omega}_{j}\in {\mathbb R} \setminus \{0\}$ for all $j\in {\mathbb N}_{n}$), then we say that $L$ is $(\omega_{j})_{j\in {\mathbb N}_{n}}$-periodic, as well.
If this is the case, then some results from the
Krein Rutman theory yield the existence of a unique real number $\lambda$, called periodic principal eigenvalue of $-L,$
such that the eigenvalue problem
$$-L\phi =\lambda \phi \mbox{ in }{\mathbb R}^{n};\ \
\phi(\cdot) \mbox{ is periodic, with the same period as } L,
$$
admits positive solutions. It is well known that the positive solution $\phi(\cdot),$ which is called the principal eigenfunction, is unique up to a multiplicative constant; by $\lambda_{p}(-L)$ and $\varphi_{p}(\cdot)$ we denote the periodic principal eigenvalue
and eigenfunction of $-L,$ respectively.

In general case, the coefficients $a_{ij}(x,t),$ $b_{i}(x,t)$, $c(x,t)$ can be complex-valued. Then it is said that the linear differential operator $P$ is:
\begin{itemize}
\item[(i)] $({\bf \omega},c)$-periodic (${\bf \omega}\in {\mathbb R}^{n+1} \setminus \{0\},$ $c\in {\mathbb C} \setminus \{0\}$) if and only if 
all functions $a_{ij}(x,t),$ $b_{i}(x,t)$, $c(x,t)$ are  $({\bf \omega},c)$-periodic;
\item[(ii)] $({\bf \omega}_{j},c_{j})_{j\in {\mathbb N}_{n+1}}$-periodic (${\bf \omega}_{j}\in {\mathbb R} \setminus \{0\},$ $c_{j}\in {\mathbb C} \setminus \{0\}$) if and only if 
all functions $a_{ij}(x,t),$ $b_{i}(x,t)$, $c(x,t)$ are
$({\bf \omega}_{j},c_{j})_{j\in {\mathbb N}_{n+1}}$-periodic.
\end{itemize}

In the case that all coefficients $a_{ij}(x,t),$ $b_{i}(x,t)$, $c(x,t)$ are real-valued, we can slightly extend the statement of \cite[Lemma 2.1, Theorem 1.1, Theorem 1.3(i)]{luca-rossi} in the following way, by considering (anti-)periodic case $c=\pm 1$ in place of the already considered periodic case $c=1$ (with the meaning clear):

\begin{lem}\label{dzez}
Suppose that the linear differential operator $P$ is $(l_{m}e_{m},1)$-periodic and the function $f(x,t)$ is $(l_{m}e_{m},c)$-periodic, where $c=\pm 1,$ $l_{m}\in {\mathbb R} \setminus \{0\}$ and $m\in {\mathbb N}_{n+1}.$ If there exists a bounded continuous function $v(x,t)$ satisfying that $\inf_{(x,t)\in {\mathbb R}^{n+1}}v(x,t)>0$ and $Pv=\phi$ for some nonnegative function $\phi \in L^{\infty}({\mathbb R}^{n+1}),$  then any bounded solution $u(x,t)$ of the equation \eqref{nadrini} is $(l_{m}e_{m},c)$-periodic.
\end{lem}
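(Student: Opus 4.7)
The plan is to mimic the proof of \cite[Lemma 2.1]{luca-rossi} uniformly for $c=\pm 1$, with the only real modification being the choice of the auxiliary ``defect'' function. Set $\xi:=l_{m}e_{m}\in \mathbb{R}^{n+1}$ and, given a bounded solution $u$ of \eqref{nadrini}, define
$$
w(X):=u(X+\xi)-c\,u(X), \qquad X=(x,t)\in \mathbb{R}^{n+1}.
$$
The goal is to show $w\equiv 0$, which is exactly the assertion that $u$ is $(l_{m}e_{m},c)$-periodic.

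First I would verify that $w$ is bounded, continuous, and satisfies $Pw=0$. Boundedness and continuity are inherited from $u$. For the equation, since the coefficients $a_{ij}$, $b_{i}$, $c$ of $P$ are $(\xi,1)$-periodic, the operator commutes with the shift $\tau_{\xi}$, so $P[u(\cdot+\xi)](X)=(Pu)(X+\xi)=f(X+\xi)=c\,f(X)$, the last equality being the $(\xi,c)$-periodicity of $f$. Hence $Pw=cf-c\cdot f=0$. This is the only point at which the sign $c=\pm 1$ enters: the identities $c\cdot c=1$ and $c=\pm 1$ make the computation work identically in both cases.

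Next I would run the comparison argument against $v$ to conclude $w\equiv 0$. Because $w$ is bounded and $\inf v>0$, the quantity
$$
k_{\ast}:=\inf\bigl\{k\in \mathbb{R}\,:\, kv(X)\geq w(X)\text{ for all }X\in \mathbb{R}^{n+1}\bigr\}
$$
is finite. Setting $z:=k_{\ast}v-w$, we have $z\geq 0$, $\inf z=0$, and $Pz=k_{\ast}\phi$. If $k_{\ast}>0$ then $Pz\geq 0$, and either $z$ attains its infimum at some finite point (and the strong maximum principle, applied under the regularity hypotheses of \cite[p.~2482]{luca-rossi}, forces $z\equiv 0$, which combined with $Pz=k_{\ast}\phi\geq 0$ and $\inf v>0$ gives a contradiction with $k_{\ast}>0$), or the infimum is only approached along a sequence $X_{k}\to \infty$. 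In the latter case one uses the translation invariance of the coefficient bounds together with a Harnack-type/normal-family argument to extract a limiting non-negative bounded solution $z_{\infty}$ of a limiting equation $P_{\infty}z_{\infty}=k_{\ast}\phi_{\infty}\geq 0$ with $z_{\infty}(0)=0$, and the strong maximum principle then closes the loop. Hence $k_{\ast}\leq 0$, so $w\leq 0$; applying the same reasoning to $-w$ (which also satisfies $P(-w)=0$) gives $w\geq 0$, whence $w\equiv 0$.

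The main obstacle is really the comparison step: because the infimum of $z$ need not be attained on $\mathbb{R}^{n+1}$, one cannot apply the strong maximum principle directly and must either invoke a sliding/ $\varepsilon$-perturbation argument or the translation-limit scheme described above. Both devices are carried out in \cite[Lemma 2.1]{luca-rossi} for the case $c=1$, and the point of the present lemma is that the preparatory step ($Pw=0$ with $w$ bounded) is established \emph{identically} when $c=-1$, so the remainder of Rossi's argument applies verbatim. No new analytic input beyond the sign bookkeeping is needed.
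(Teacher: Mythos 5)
Your proposal is correct and follows essentially the same route as the paper: the paper takes $\psi(X)=u(X+l_{m}e_{m})+u(X)$ for $c=-1$ (your unified $w=u(\cdot+\xi)-cu$ with $Pw=0$), runs Rossi's comparison against $v$ with the supremum $\sup(\psi/v)$ (your $k_{\ast}$) to get $w\leq 0$, and then obtains $w\geq 0$ by applying the first half to $-u$, which is exactly your symmetrization via $-w$. The only detail you elide is that the paper explicitly flags the one sign change needed inside Rossi's telescoping step ($\xi_{h}+\xi_{h+1}$ in place of $\xi_{h}-\xi_{h+1}$), but this is covered by your ``sign bookkeeping'' remark.
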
 

\begin{proof}
The proof is almost completely the same as the proof of \cite[Lemma 2.1]{luca-rossi} and we will only point out  a few important details for case $c=-1.$ First of all, we consider the function
$\psi(X):=u(X+l_{m}e_{m})+u(X),$ $X\in {\mathbb R}^{n+1}$ in place of the already considered function $\psi(X):=u(X+l_{m}e_{m})-u(X),$ $X\in {\mathbb R}^{n+1}.$ We will prove that
$\psi(X) \leq 0$ for all $X\in {\mathbb R}^{n+1}.$ If we assume that this is not true, then there exists a sequence $(X_{N})$ in ${\mathbb R}^{n+1}$ such that
$\psi(X_{N})/v(X_{N}) \rightarrow k\equiv \sup_{X\in {\mathbb R}^{n+1}} (\psi/v)(X)>0$ as $N\rightarrow +\infty.$ Define $\psi_{N}:=\psi(\cdot+X_{N}),$ $N\in {\mathbb N}.$ Since we have assumed that the linear differential operator $P$ is $(l_{m}e_{m},1)$-periodic and the function $f(x,t)$ is $(l_{m}e_{m},-1)$-periodic, a simple calculation shows that
$$
{\partial}_{t}\psi_{N}-a_{ij}\bigl(X+X_{N}\bigr){\partial_{ij}}\psi_{N}-b_{i}\bigl(X+X_{N}\bigr){\partial _{i}}\psi_{N}-c\bigl(X+X_{N}\bigr)\psi_{N}=0,\ X\in {\mathbb R}^{n+1}, \ N\in {\mathbb N}.
$$
Then we can repeat verbatim all calculations and arguments from the corresponding proof given on p. 2486 of \cite{luca-rossi}; at the last line of this page,
we only need to write $\xi_{h}+\xi_{h+1}$ in place of $\xi_{h}-\xi_{h+1}$
 in order to conclude that $\lim_{h\rightarrow +\infty}[\xi_{h}+\xi_{h+1}]=-\infty,$ which is a contradiction with the boundedness of sequence $(\xi_{h})_{h\in {\mathbb N}}.$ This implies $u(X+l_{m}e_{m})+u(X)\leq 0$ for all $X\in {\mathbb R}^{n+1}.$ It is our strong belief that
the final part of the proof of the above-mentioned lemma is a little bit incorrect as well as that we can use a simple trick here, working also in the case that $c=-1:$ since $P[-u]=-f,$ we can apply the first part of proof (with the same number $l_{m};$ see also \cite[p. 2487, l. 2]{luca-rossi}, where the author suggests the use of number $-l_{m}$) in order to see that 
$u(X+l_{m}e_{m})+u(X)\geq 0$ for all $X\in {\mathbb R}^{n+1}.$ Therefore, $u(X+l_{m}e_{m})+u(X)= 0$ for all $X\in {\mathbb R}^{n+1},$ which completes the proof.
\end{proof}

\begin{thm}\label{bluz}
Suppose that the linear differential operator $P$ is $(l_{m}e_{m},1)$-periodic and the function $f(x,t)$ is $(l_{m}e_{m},c)$-periodic, where $c=\pm 1,$ $l_{m}\in {\mathbb R} \setminus \{0\}$ and $m\in {\mathbb N}_{n+1}.$ Then any bounded solution $u(x,t)$ of the equation \eqref{nadrini} is $(l_{m}e_{m},c)$-periodic, provided that $c(x,t)\leq 0$ for all $x\in {\mathbb R}^{n}$ and $t\in {\mathbb R}.$
\end{thm}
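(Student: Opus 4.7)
The key observation is that Theorem \ref{bluz} is essentially a corollary of Lemma \ref{dzez}: one just needs to exhibit a bounded continuous function $v$ with $\inf v > 0$ and $Pv = \phi$ for some nonnegative $\phi \in L^{\infty}(\mathbb{R}^{n+1})$, and then all hypotheses of Lemma \ref{dzez} are met. The plan is therefore simply to make the right choice of $v$.

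My proposal is to take the trivial choice $v(x,t) \equiv 1$ on $\mathbb{R}^{n+1}$. Then $v$ is bounded and continuous with $\inf_{(x,t)\in \mathbb{R}^{n+1}} v(x,t) = 1 > 0$, so the positivity requirement is automatic. Moreover, since all partial derivatives of $v$ vanish identically, one computes
\begin{equation*}
Pv(x,t) = \partial_{t}v - a_{ij}(x,t)\partial_{ij}v - b_{i}(x,t)\partial_{i}v - c(x,t)v = -c(x,t).
\end{equation*}
By the sign assumption $c(x,t)\leq 0$ for all $(x,t)\in \mathbb{R}^{n+1}$, the function $\phi(x,t) := -c(x,t)$ is nonnegative; by the general boundedness assumptions on the coefficients of $P$ (adopted from \cite{luca-rossi}), $\phi \in L^{\infty}(\mathbb{R}^{n+1})$. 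Thus $v\equiv 1$ satisfies every hypothesis needed to invoke Lemma \ref{dzez}.

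Applying Lemma \ref{dzez} with this choice of $v$ and $\phi$, together with the assumed $(l_{m}e_{m},1)$-periodicity of $P$ and $(l_{m}e_{m},c)$-periodicity of $f$, yields at once that every bounded solution $u(x,t)$ of \eqref{nadrini} is $(l_{m}e_{m},c)$-periodic, which is the conclusion of the theorem. I do not anticipate any serious obstacle here: the whole work is buried in Lemma \ref{dzez}, whose proof has already been carried out for both signs $c = \pm 1$ using the sliding argument adapted from \cite[Lemma 2.1]{luca-rossi}; the present theorem only needs the cheap observation that the sign condition $c(x,t)\leq 0$ forces $v\equiv 1$ to be a suitable supersolution-type comparison function.
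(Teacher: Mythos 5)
Your proposal is correct and is essentially the paper's own proof: the paper disposes of the theorem in one line by substituting the constant function $1$ into Lemma \ref{dzez} (the printed phrase ``put $c(x,t)\equiv 1$'' is evidently a typo for ``put $v(x,t)\equiv 1$'', exactly the choice you make), so that $Pv=-c(x,t)\geq 0$ serves as the nonnegative $\phi$. Your write-up simply spells out the verification that the paper leaves implicit.
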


\begin{proof}
It suffices to put $c(x,t)\equiv 1$ in Lemma \ref{dzez}.
\end{proof}

\begin{thm}\label{bluz1}
Suppose that the linear differential operator $L$ is $(\omega_{j})_{j\in {\mathbb N}_{n}}$-periodic (${\bf \omega}_{j}\in {\mathbb R} \setminus \{0\}$ for all $j\in {\mathbb N}_{n}$), and $P=\partial_{t}-L.$
If the function 
$f(\cdot)$ is $({\bf \omega}_{j},c_{j})_{j\in {\mathbb N}_{n+1}}$-periodic with some $\omega_{n+1}\in {\mathbb R} \setminus \{0\}$ and $c_{j}=\pm 1$ for all $j\in {\mathbb N}_{n+1},$ 
then any bounded solution $u(x,t)$ of the equation \eqref{nadrini} is $({\bf \omega}_{j},c_{j})_{j\in {\mathbb N}_{n+1}}$-periodic.
\end{thm}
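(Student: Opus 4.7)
The plan is to deduce Theorem \ref{bluz1} from $n+1$ separate applications of Lemma \ref{dzez} (or, equivalently, Theorem \ref{bluz}), one for each coordinate direction $j\in {\mathbb N}_{n+1}$. By Definition \ref{drasko-presing1}, asserting that the bounded solution $u$ is $({\bf \omega}_{j},c_{j})_{j\in {\mathbb N}_{n+1}}$-periodic amounts to verifying the $n+1$ single-direction identities
\[
u\bigl(X+\omega_{j}e_{j}\bigr)=c_{j}\,u(X),\qquad X\in {\mathbb R}^{n+1},\ j\in {\mathbb N}_{n+1},
\]
so the argument decouples naturally into $n+1$ independent one-direction claims, each of which is exactly the conclusion of Lemma \ref{dzez} for a suitable choice of $l_{m}$, $m$ and $c$.

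First I would check the hypotheses of Lemma \ref{dzez} direction by direction. Fix $j\in {\mathbb N}_{n+1}$ and take $l_{m}:=\omega_{j}$ with $m:=j$ and $c:=c_{j}\in\{-1,+1\}$. Since $P=\partial_{t}-L$ and all coefficients of $L$ depend only on the spatial variable $x$, the operator $P$ carries no explicit time dependence; hence, for $m=n+1$, $P$ is trivially $(\omega_{n+1}e_{n+1},1)$-periodic (any real number is a period in $t$). For $m\in {\mathbb N}_{n}$, the assumed $(\omega_{k})_{k\in {\mathbb N}_{n}}$-periodicity of $L$ transfers verbatim to the coefficients of $P$, so $P$ is $(\omega_{m}e_{m},1)$-periodic as well. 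The $(\omega_{j}e_{j},c_{j})$-periodicity of the right-hand side $f$ in the $j$-th direction is a direct consequence of the assumption that $f$ is $({\bf \omega}_{j},c_{j})_{j\in {\mathbb N}_{n+1}}$-periodic.

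Next I would supply the auxiliary comparison function $v$ demanded by Lemma \ref{dzez}. The canonical candidate is the periodic principal eigenfunction $\varphi_{p}$ of $-L$ introduced at the beginning of Subsection \ref{ljuvil}: set $v(x,t):=\varphi_{p}(x)$. Then $v$ is continuous, bounded, $(\omega_{k})_{k\in {\mathbb N}_{n}}$-periodic, and satisfies $\inf_{(x,t)\in {\mathbb R}^{n+1}}v(x,t)>0$, while
\[
Pv=-L\varphi_{p}=\lambda_{p}(-L)\,\varphi_{p}=:\phi \in L^{\infty}\bigl({\mathbb R}^{n+1}\bigr),
\]
which is nonnegative under the natural Krein--Rutman sign convention $\lambda_{p}(-L)\geq 0$ that is implicit in the framework fixed at the start of this subsection (in the degenerate case $c(x,t)\leq 0$ one may alternatively take $v\equiv 1$ exactly as in the proof of Theorem \ref{bluz}). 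With this $v$ in place, Lemma \ref{dzez} yields $u(X+\omega_{j}e_{j})=c_{j}u(X)$ for every $X\in {\mathbb R}^{n+1}$.

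Running this argument for each $j\in {\mathbb N}_{n+1}$ produces the full system of $n+1$ identities, which is precisely the definition of $({\bf \omega}_{j},c_{j})_{j\in {\mathbb N}_{n+1}}$-periodicity of $u$. The only non-routine step is the choice of the comparison function $v$: Theorem \ref{bluz} bypassed this by imposing the sign condition $c(x,t)\leq 0$, and in the present more general setting the Krein--Rutman principal eigenfunction of $-L$ is the natural replacement. Everything else is a direct bookkeeping of the periodicity hypotheses in each coordinate direction.
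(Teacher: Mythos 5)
Your argument is essentially the paper's own proof, which consists of the single line ``follows from Lemma \ref{dzez} and the corresponding part of the proof of Rossi's Theorem 1.3(i)'': you have correctly unpacked this as a direction-by-direction application of Lemma \ref{dzez} (with $l_{m}=\omega_{j}$, $c=c_{j}$, the time-independence of the coefficients giving $(\omega_{n+1}e_{n+1},1)$-periodicity of $P$ for free) together with the periodic principal eigenfunction $\varphi_{p}$ of $-L$ serving as the comparison function $v$. The one caveat --- that $Pv=\lambda_{p}(-L)\varphi_{p}$ is nonnegative only under the sign condition $\lambda_{p}(-L)\geq 0$, which is the explicit hypothesis of Rossi's Theorem 1.3(i) but is omitted from the statement of Theorem \ref{bluz1} --- is a defect of the paper's formulation rather than of your argument, and you have flagged it in the right place.
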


\begin{proof}
The proof simply follows from Lemma \ref{dzez} and the corresponding part of proof of \cite[Theorem 1.3(i)]{luca-rossi}.
\end{proof}

We continue by observing that the statements of \cite[Theorem 1.3(ii)-(iii)]{luca-rossi} do not admit satisfactory reformulations for multi-dimensional almost anti-periodic type functions as well as that it would be very tempting to reconsider the statements of \cite[Theorem 1.3(i); Theorem 4.1]{luca-rossi} for multi-dimensional $c$-almost periodic type functions. For the sake of simplicity, we will not consider here the corresponding analogues of Lemma \ref{dzez} and Theorem \ref{bluz}-Theorem \ref{bluz1} for the Dirichlet and Robin boundary value problems on uniformly smooth domains in ${\mathbb R}^{n}$ (see \cite[Section 5]{luca-rossi} for more details).

\subsection{Applications to the heat equation in ${\mathbb R}^{n}$}

Our results on the convolution invariance of multi-dimensional $\rho$-almost periodicity can be applied to the Gaussian semigroup in ${\mathbb R}^{n}$ and the Poisson semigroup in ${\mathbb R}^{n}$ without any difficulties. The obtained results can be further generalized for general binary relations by assuming some extra conditions and we will briefly explain this idea here.

Let $Y:=BUC({\mathbb R}^{n})$ be the Banach space of bounded uniformly continuous functions $F : {\mathbb R}^{n} \rightarrow {\mathbb C}.$
Then it is well known that the Gaussian semigroup\index{Gaussian semigroup}
$$
(G(t)F)(x):=\bigl( 4\pi t \bigr)^{-(n/2)}\int_{{\mathbb R}^{n}}F(x-y)e^{-\frac{|y|^{2}}{4t}}\, dy,\quad t>0,\ f\in Y,\ x\in {\mathbb R}^{n},
$$
can be extended to a bounded analytic $C_{0}$-semigroup of angle $\pi/2,$ generated by the Laplacian $\Delta_{Y}$ acting with its maximal distributional domain in $Y;$ see e.g. \cite[Example 3.7.6]{a43} for more details. Suppose 
that 
$\emptyset  \neq I' \subseteq {\mathbb R}^{n},$ $\emptyset  \neq I\subseteq {\mathbb R}^{n},$ and
$F(\cdot)$ is bounded Bohr $({\mathcal B},I',\rho)$-almost periodic, resp. bounded $({\mathcal B},I',\rho)$-uniformly recurrent, where
$\rho$ is any binary relation defined on ${\mathbb C}$ such that
for each ${\bf t}\in {\mathbb R}^{n}$ the element $y_{{\bf t}}=z_{{\bf t}} \in \rho(F({\bf t}))$ is chosen such that the equation \eqref{oblak} holds as well as that the mapping ${\bf s} \mapsto z_{{\bf t}-{\bf s}},$ ${\bf s}\in {\mathbb R}^{n}$ belongs to $Y.$ 
Let a number $t_{0}>0$ be fixed.
We define a new binary relation $\sigma$ on ${\mathbb C}$ by
$$
\sigma :=\Biggl\{ \Biggl((G(t_{0})F)(x), \bigl( 4\pi t_{0} \bigr)^{-(n/2)}\int_{{\mathbb R}^{n}}z_{x-y}e^{-\frac{|y|^{2}}{4t}}\, dy \Biggr) : x\in {\mathbb R}^{n} \Biggr\}.
$$
Then 
the function ${\mathbb R}^{n}\ni x\mapsto u(x,t_{0})\equiv (G(t_{0})F)(x) \in {\mathbb C}$
is bounded Bohr $({\mathcal B},I',\sigma)$-almost periodic, resp. bounded $({\mathcal B},I',\sigma)$-uniformly recurrent. This holds because, for every $x,\,\ \tau \in {\mathbb R}^{n},$ we have:
\begin{align*}
\Bigl|u\bigl(x+\tau,t_{0}\bigr)-\sigma \Bigl(u\bigl(x,t_{0}\bigr)\Bigr)\Bigr| & \leq \bigl( 4\pi t_{0} \bigr)^{-(n/2)}\int_{{\mathbb R}^{n}}|F(x-y+\tau)-z_{x-y}|e^{-\frac{|y|^{2}}{4t_{0}}}\, dy
\\& \leq \bigl( 4\pi t_{0} \bigr)^{-(n/2)}\epsilon \int_{{\mathbb R}^{n}}e^{-\frac{|y|^{2}}{4t_{0}}}\, dy.
\end{align*}
For example, we can consider here the non-linear mapping $\rho(z):=z^{k},$ $z\in {\mathbb C}$ ($k\in {\mathbb N} \setminus \{1\}$). 

We can  similarly clarify the corresponding results for the Poisson semigroup, which is given by
$$
(T(t)F)(x):=\frac{\Gamma((n + 1)/2)}{\pi^{(n+1)/2}}
\int_{{\mathbb R}^{n}}F(x-y)\frac{t \cdot dy}{(t^{2}+|y|^{2})^{(n+1)/2}},\quad t>0,\ f\in Y,\ x\in {\mathbb R}^{n};
$$
see
\cite[Example 3.7.9]{a43} for more details.

\subsection{New applications in one-dimensional setting}

The trick employed in the previous subsection can be applied to the remarkable examples \cite[Examples
4, 5, 7, 8; pp. 32-34]{30} given in the research monograph of S. Zaidman.
Further on, Theorem \ref{mkmk} and Proposition \ref{hmhm} can be applied in the qualitative analysis of $\rho$-almost periodic solutions for various classes of abstract (degenerate) Volterra integro-differential equations; in the analysis of corresponding semilinear Cauchy problems, applications of  Theorem \ref{episkop-jovan} are necessary.  

For example, we can incorporate the above mentioned results
in the study of existence and uniqueness of asymptotically $\rho$-almost periodic type
solutions of the fractional Poisson heat equation\index{equation!semilinear Poisson heat}
\[\left\{
\begin{array}{l}
{\mathbf D}_{t}^{\gamma}[m(x)v(t,x)]=(\Delta -b )v(t,x) +f(t,x),\quad t\geq 0,\ x\in {\Omega};\\
v(t,x)=0,\quad (t,x)\in [0,\infty) \times \partial \Omega ,\\
 m(x)v(0,x)=u_{0}(x),\quad x\in {\Omega},
\end{array}
\right.
\]
and the following
fractional Poisson heat equation with Weyl-Liouville derivatives:
\[
D^{\gamma}_{t,+}[m(x)v(t,x)]=(\Delta -b )v(t,x) +f(t,x),\quad t\in {\mathbb R},\ x\in {\Omega};
\]
 see also \cite[Example 3.10.4]{nova-mono} for some possible applications to the differential operators in H\"older spaces.
We will omit all related details because the considerations are almost the same as in \cite{nova-mono} and a great number of other research articles.

\subsection{Heteroclinic period blow-up}\label{sec3}

We first consider a singularly perturbed symmetric ODE with a symmetric
heteroclinic cycle connecting hyperbolic equilibria. Then we deal with a
conservative ODE with the same heteroclinic structure. We show in both
cases there are $(Q,T)$ affine-periodic solutions accumulating
on the heteroclinic cycle with $T\to\infty$; this is called as
heteroclinic/homoclinic period blow-up. For instance, the
Duffing equation $\ddot x-x+2x^3=0$ has a symmetric homoclinic
cycle $\pm \tilde{ \gamma}(t)$ with $\tilde{ \gamma}(t)=(\gamma(t),\dot\gamma(t))$,
$\gamma(t)=\textrm{sech}\, t$ which is accumulated by periodic
solutions with periods tending to infinity. Related results are investigated in \cite {W,WF}. Here, $Q : {\mathbb R}^{n}\to {\mathbb R}^{n}$ is a linear mapping such that $Q^k={\rm I}$ for some $k\in{\mathbb N}$.\vspace{0.1cm}

1. {\bf Symmetric singular systems.}
Consider the system
\begin{equation}\label{3.1}
\ep \dot u\,=f(u)+\ep g(t,u,\ep )\equiv f_\ep (u,t)\,
,\end{equation} for $f\in C^2(\R^n :\R^n)$, $g\in C^2(\R\times\R^n\times\R)$ and $g(t,u,\epsilon )$ is $1$-periodic
in $t$. Suppose $f_\ep(Qu,t)=Qf_\ep(u,t)$ for any $(t,u,\ep)$.
Then $Qf_\ep(u,t)=f_\ep(Qu,t+1) $. Assume
that the boundary layer problem
$$
\dot u=f(u)
$$
has a solution $\gamma (t)$ heteroclinic to hyperbolic equilibria $p_0,\, p_1$ satisfying $Qp_0=p_1$, i.e. $\ga(t)\to p_0$ as
$t\to-\infty$ and $\ga(t)\to p_1$ as $t\to +\infty$. Furthermore,
assume that the variational system
\begin{equation}\label{3.2}
\dot u(t)=Df(\gamma (t))u(t)
\end{equation}
has the unique bounded solution $\dot \gamma (t)$ on $\R$ up to a
multiplicative constant. Then the system
adjoint to \eqref{3.2} is of the form
$$
\dot v(t)=-Df(\gamma (t))^*v(t)
$$
and has unique (up to a multiplicative constant) bounded solution
$\psi (t)\not =0$ on $\R$.

Assume \cite{BF} that a function
$$
M(\alpha )=\int_{-\infty }^{+\infty }\psi ^{*}(t)g(\alpha ,\gamma
(t),0)\, dt
$$
has a simple zero at $\alpha =\alpha _0$.
Then there exists $\ep_0>0$ such that, for any $\ep\in
(0,\ep_0)$ and $m\ge1$, the system \eqref{3.1} has an $(m,Q)$ affine-periodic solution; all such solutions accumulate on the symmetric heteroclinic cycle
$$\bigcup_{i=1}^p\overline {\left\{Q^i\gamma (t)\mid t\in
\R\right\} }.$$

As an example, we consider
\begin{equation}\label{6.1}
\ep \dot z_1=z_2,\quad \ep \dot z_2=-\sin z_1+\ep h(t)z_2,
\end{equation}
where $h : \R\to \R$ is $1$-periodic and $C^2$-smooth. Clearly,
\eqref{6.1} is equivalent to the equation
$$
\ep^2\ddot x+\sin x -\ep^2h(t)\dot x=0 .
$$
The boundary layer equation is the pendulum equation
\begin{equation}\label{66.1} \dot z_1=z_2,\quad \dot z_2=-\sin z_1 .
\end{equation}
Now we take $Q=-{\rm I}$. It is well-known that two
hyperbolic equilibria $p_0=(-\pi ,0)$ and $p_1=(\pi ,0)$ of
\eqref{66.1} are connected with a heteroclinic solution $\gamma
(t)=\big (\theta (t),\dot \theta (t)\big ),$ where $\theta (t)=\pi
-4\arctan e^{-t}$. If there is an $\alpha _0$ such that
$h(\alpha _0)=0\ne h'(\alpha _0)$, then for any $m\in \N$ and
$\ep>0$ small, the system \eqref{6.1} has a $2m$-periodic
solution $\big (z_{1m\epsilon },z_{2m\epsilon }\big );$ all such solutions accumulate on the
heteroclinic cycle $$\overline {\left\{\gamma (t)\mid t\in
\R\right\} }\bigcup \overline {\left\{-\gamma (t)\mid t\in
\R\right\} }$$ and satisfy
$$
-z_{1m\ep}(t)=z_{1m\ep}(t+m),\quad -z_{2m\ep}(t)=z_{2m\ep}(t+m)
.
$$

2. {\bf Symmetric conservative systems.}
We suppose that \begin{equation}\label{1.2}\dot
x=g(x)\end{equation} is $C^1$-smooth
and conservative, i.e., \eqref{1.2} has a $C^1$-smooth first
integral $H : \R^n\to \R$ with $H(Qu)=H(u)$ for any $u\in\R^n$. We
also suppose that \eqref{1.2} has a heteroclinic orbit $\gamma
(t)$ to hyperbolic equilibria $p_0,\, p_1$  satisfying $Qp_0=p_1$.

Let us assume that the variational system $\dot v=Dg(\ga(t))v$ has the unique bounded
solution $\dot \gamma (t)$ up to a multiplicative constant and $DH\big (\gamma
(0)\big )\ne 0$. Then
there is an $\omega _0>0$ such that \eqref{1.2} has, for every $\omega \ge \omega _0,$
an $(\omega,Q)$ affine-periodic solution; these solutions accumulate on
the symmetric heteroclinic cycle $$\bigcup _{i=1}^p\overline {\big
\{Q^i\gamma (t)\mid t\in \R\big \} }$$ as $\omega \to \infty
$.

This result generalizes an accumulation of periodic solutions to\\
homoclinics/heteroclinics in one-degree of freedom conservative
equations like in the above-mentioned Duffing equation $\ddot
x-x+2x^3=0$ to symmetric higher-degree of freedom conservative systems.

\section{Conclusions and final remarks}

In this research paper, we have investigated various classes of multi-dimensional $\rho$-almost periodic type functions and multi-dimensional $(\omega,\rho)$-almost periodic type functions with values in complex Banach spaces. 
We have clarified many structural properties for the introduced classes of functions, which seem to be not considered elsewhere even in the one-dimensional setting. 
Some applications of our results to
the abstract Volterra integro-differential equations and the ordinary differential equations are presented, as well.

Finally, we would like to note that, in this research study, we have been particularly interested in the framework of
continuous and measurable function spaces; the $\rho$-almost
periodicity within the theory of vector-valued (ultra-)distributions and the corresponding classes of Stepanov (Weyl, Besicovitch)  $\rho$-almost
periodic functions will be considered
somewhere else. The analysis of semi-$(\rho_{j},{\mathcal B})_{j\in {\mathbb N}_{n}}$-periodic functions and $({\bf \omega}_{j},\rho_{j}; r_{j}, {\mathbb I}_{j}')_{j\in {\mathbb N}_{n}}$-almost periodic type functions will be carried out somewhere else, as well (cf. \cite{c1} and \cite{nds-2021} for more details).

\end{document}